\declaretheorem[numberwithin=section]{theorem}
\declaretheorem[sibling=theorem]{lemma}
\declaretheorem[sibling=theorem]{proposition}
\declaretheorem[sibling=theorem]{problem}
\numberwithin{equation}{section}
\newcommand{\injrad}{\operatorname{inj.rad}}
\newcommand{\Ric}{\operatorname{Ric}}
\newcommand{\diam}{\operatorname{diam}}
\newcommand{\vol}{\operatorname{vol}}
\newcommand{\sndf}{{I\!I}}
\begin{document}
	\title[Canonical nilpotent structure with manifold orbit space] {Canonical nilpotent structure under bounded Ricci curvature and Reifenberg local covering geometry over regular limits}
	
	
	\author{Zuohai Jiang}
	\address[Zuohai Jiang]{Beijing International Center for Mathematical Research, Peking University, Beijing, China}
	\email{jiangzuohai08@pku.edu.cn}
	
	
	\author{Lingling Kong}
	\address[Lingling Kong]{School of Mathematical and statistical,  Northeast Normal University, Changchun, China}
	\curraddr{}
	\email{kongll111@nenu.edu.cn}
	\thanks{L.K. would like to thank Capital Normal University for
		a warm hospitality during his visit}
	\author{Shicheng Xu}
        \address[Shicheng Xu]{School of Mathematical Sciences, Capital Normal University, Beijing, China}
        \address[Shicheng Xu]{Academy for Multidisciplinary Studies, Capital Normal University, Beijing, China}
        \curraddr{}
        \email{shichengxu@gmail.com}
	
	\keywords{Nilpotent structure; Ricci curvature; Smoothing; Stability}
	\subjclass[2010]{53C23, 53C21, 53C20}
	
	\date{\today}
		\begin{abstract}
		It is known that a closed collapsed Riemannian $n$-manifold $(M,g)$ of bounded Ricci curvature and Reifenberg local covering geometry admits a nilpotent structure in the sense of Cheeger-Fukaya-Gromov with respect to a smoothed metric $g(t)$. We prove that a canonical nilpotent structure over a regular limit space that describes the collapsing of original metric $g$ can be defined and uniquely determined up to a conjugation, and prove that the nilpotent structures arising from nearby metrics $g_\epsilon$ with respect to $g_\epsilon$'s sectional curvature bound are equivalent to the canonical one.
	\end{abstract}
	\maketitle
	
	\setcounter{section}{-1}
	\section{Introduction}\label{section-0}
		A Riemannian manifold $(M,g)$ with a normalized curvature bound is called $\epsilon$-collapsed, if the volume of any unit ball $B_{1}(x)$ on $M$ is less than $\epsilon$.
	Collapsed Riemannian manifolds with bounded sectional curvature were extensively studied by \cite{Fukaya1987,Fukaya1988,Fukaya1989}, \cite{CheegerGromov1990,CheegerGromov1990II} and \cite{CFG1992}. A nilpotent structure, which totally characterizes ``collapsing implies symmetry'' phenomena under bounded sectional curvature, is constructed by  Cheeger-Fukaya-Gromov \cite{CFG1992} (cf. Cheeger-Gromov \cite{CheegerGromov1990,CheegerGromov1990II}, and Fukaya \cite{Fukaya1987,Fukaya1988,Fukaya1989} ). The existence of such a structure has found many applications in Riemannian geometry (e.g.,\cite{FangRong1999},\cite{PRT1999},\cite{PT1999}, etc. and survey papers \cite{Fukaya2006} and \cite{Rong2007}).

	In general, such structures do not exist on collapsed manifolds of bounded Ricci curvature. Counterexamples were constructed by Anderson \cite{Anderson1992}. It is well-known that many collapsed Ricci-flat manifolds admit no nilpotent structures \cite{GrossWilson2000},\cite{HSVZ2018},\cite{YangLi2019}.
	Finding natural conditions for the existence of a nilpotent structure has been a natural problem in the study of collapsed manifolds under bounded Ricci curvature, around which many useful tools have been established, such as the $\epsilon$-regularity theorem \cite{CheegerTian2006} for Einstein 4-manifolds, smoothing methods under weak harmonic norms \cite{DWY1996},\cite{PWY1999} applicable for points where the conjugate radius has a definite positive lower bound, and $\epsilon$-regularity for points in a collapsed Einstein $n$-manifold where the local fundamental group admits a full rank over a regular space \cite{NaberZhang2016}. Recently, it was known by \cite{HKRX2020} and \cite{HW2020-2} that nilpotent structures exist around points in an $n$-manifold of Ricci curvature $\Ric\ge -(n-1)$ where the following \emph{$(\delta,\rho)$-Reifenberg local covering geometry} condition is satisfied for a small constant $\delta=\delta(n)$ depending only on the dimension.
	
	A point $x$ in an $n$-manifold $(M,g)$ of $\Ric(M,g)\ge -(n-1)$ is said to admit \emph{$(\delta,\rho)$-Reifenberg local covering geometry}, if
	the Gromov-Hausdorff distance,
	\begin{equation}\label{def-Reifenberg}
	d_{GH}\left(B_{r}(\tilde x), B_{r}^{n}(0)\right)\leq \delta \cdot r, \quad 0<r\leq \rho,
	\end{equation}
	where $\tilde x$ (called a $(\delta,\rho)$-Reifenberg point) lies in the preimage of $x$ in the (incomplete) Riemannian universal cover $\pi: (\widetilde{B_{\rho}(x)},\tilde x)\to (B_{\rho}(x),x)$ and $B_{r}^{n}(0)$ denotes an $r$-ball in the $n$-dimensional Euclidean space $\mathbb R^n$.

	Conversely, it was proved by \cite{CFG1992} that if $(M,g)$ is a complete Riemannian $n$-manifold with sectional curvature bound $|\sec(M,g)|\le 1$, then there are positive constants $\delta=\delta(n)>0$ and $\rho=\rho(n)>0$ such that $(M,g)$ has $(\delta,\rho)$-Reifenberg local covering geometry, i.e., \eqref{def-Reifenberg} holds for any point $x\in M$. For a simple proof, see \cite{Rong2020,Rong2021}.

As already mentioned above, a closed collapsed Riemannnian $n$-manifold $(M,g)$ with $|\Ric(M,g)|\leq n-1$ and $(\delta,\rho)$-Reifenberg local covering geometry admits nilpotent structures. That is, for $\delta=\delta(n)$,
$(M,g)$ can be smoothed to a collapsed metric $g(t)$ with bounded sectional curvature $K(t)$ (\cite{DWY1996}, \cite{PWY1999}, \cite{HKRX2020}, cf. \cite{Hamilton1982}, \cite{Perelman2002}) and hence a nilpotent Killing structure associated with the smoothed metric $g(t)$ exists in the sense of Cheeger-Fukaya-Gromov \cite{CFG1992}.

In this paper we consider the uniqueness of those nilpotent structures with respect to the original metric. Compared with the situation of bounded sectional curvature \cite{CFG1992}, where the nilpotent structure are known to be canonical, according to \cite{DWY1996}, \cite{PWY1999} $g(t)$ generally may be not $C^1$-close to the original metric $g$, and the sectional curvature bound $K(t)$ of $g(t)$ may blow up as $g(t)$ approaches $g$. It is natural to ask:
	\begin{problem}\label{problem-uniqueness-of-nilpotent-structure}
		Let $(M_i,g_i)$ be a sequence of Riemannian $n$-manifolds of $|\Ric(M_i,g_i)|\leq n-1$ with $(\delta,\rho)$-Reifenberg local covering geometry for some $\rho>0$ and small $\delta>0$ depending on $n$. Assume that $(M_i,g_i)$ collapse to a compact metric space,
		$$(M_i,g_i)\overset{GH}{\longrightarrow}X.$$
		Is there a canonical nilpotent structure on $(M_i,g_i)$ over $X$ that is uniquely determined by $g_i$ for any fixed large $i$?
	\end{problem}

The purpose of the paper is to answer Problem \ref{problem-uniqueness-of-nilpotent-structure} affirmatively in the case that $X$ is regular, i.e.,
every tangent cone of $X$ is isometric to the Euclidean space $\mathbb R^m$ with $m<n$.
Hereafter, we call such limit space $X$ an \emph{$m$-regular $(\delta,\rho)$-Reifenberg local covering Ricci limit space}.
\par
In \cite[Theorem 0.1]{JKX2022Convergence}, we proved that any regular limit space $X$ in Problem \ref{problem-uniqueness-of-nilpotent-structure} is a $C^{1,\alpha}$-Riemannian manifold $(X,h)$, i.e., $X$ is a $C^{\infty}$-smooth manifold with a $C^{1,\alpha}$-Riemannian metric $h$, whose $C^{1,\alpha}$-harmonic radius depends on the volume of local balls.
Then a nilpotent structure on $(M_i,g_i)$ consists of two ingredients. One is the underlying fibration whose fibers  are  infra-nilmanifolds. The other is the symmetry arising from an action by a simply connected nilpotent Lie group on the Riemannian universal cover $\widetilde{U}$ of a neighborhood $U$ around each point in $M_i$, which extends the deck-transformation on $\widetilde{U}$ and is isometric with respect to a nearby metric.
\par
In \cite[Theorem 0.9]{JKX2022Convergence}, we have constructed a smooth fibration from $(M_i,g_i)$ to $(X,h)$ with the same regularities (i.e., \eqref{thm-fibration-1} and \eqref{thm-fibration-3} below) as the fibrations under bounded sectional curvature constructed in \cite{Fukaya1987},\cite{CFG1992} for each $i$ large.
	\begin{theorem}[\cite{JKX2022Convergence}]\label{thm-fibration}
		Given $\rho,v>0,$ and positive integers $n, m(\leq n)$, there exist positive constants $\delta(n),$  $\epsilon_0=\epsilon_0(n,\rho,v)$ and $C(n,\rho,v)$ such that for $\delta=\delta(n)$ and any $0<\epsilon\leq \epsilon_0$, the following holds.
		
		\par		
			Let $(M,g)$ be a closed Riemannian $n$-manifold with $|\Ric(M,g)|\leq n-1$ and $(\delta,\rho)$-Reifenberg local covering geometry. Assume that $(X,h)$ is a compact $m$-regular $(\delta,\rho)$-Reifenberg local covering Ricci limit space where every $1$-ball's volume $\ge v>0$.
		If $d_{GH}((M,g),(X,h))\le \epsilon $, then there is a $C^\infty$-smooth fibration $f:M\to X$ satisfying
			\begin{enumerate}\numberwithin{enumi}{theorem}
			\item\label{thm-fibration-1} $f$ is a $\varkappa(\epsilon\,|\,n)$-almost Riemannian submersion, i.e., for any vector $\xi$ perpendicular to an $f$-fiber,
			$e^{-\varkappa(\epsilon\,|\,n)}|\xi|_{g}\le |d f(\xi)|_h\leq e^{\varkappa(\epsilon\,|\,n)}|\xi|_{g},$
			where after fixing $n$, $\varkappa(\epsilon\,|\,n)\to 0$ as $\epsilon\to 0$.
			\item\label{thm-fibration-2} The intrinsic diameter of any $f$-fiber $F_{p}=f^{-1}(p)$ over $p\in X$ satisfies $\diam_{g}(F_p) \le C(n,\rho,v)\epsilon.$
			\item\label{thm-fibration-3} The second fundamental form of $f$ satisfies
			$\left|\nabla^2f\right|\leq C(n,\rho,v).$
			\item\label{thm-fibration-4} $F_p$ is diffeomorphic to an infra-nilmanifold.
		\end{enumerate}
	\end{theorem}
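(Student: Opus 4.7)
The plan is to combine a smoothing of $g$ with a Fukaya-type fibration construction over the $C^{1,\alpha}$-Riemannian limit $(X,h)$ furnished by \cite[Theorem 0.1]{JKX2022Convergence}. First, I would apply the smoothing results of \cite{DWY1996,PWY1999,HKRX2020} to produce, for each small $t>0$, a smooth metric $g(t)$ on $M$ with $|\sec(g(t))|\le K(t)$, $e^{-\sigma(t)}g\le g(t)\le e^{\sigma(t)}g$, and $\|\nabla^{g}-\nabla^{g(t)}\|_{g}\le \tau(t)$, where $K(t)$ may blow up as $t\to 0$ while $\sigma(t),\tau(t)\to 0$. In particular $d_{GH}((M,g(t)),(X,h))\le \epsilon+\varkappa(t)$, so after choosing $t$ sufficiently small the fibration can be constructed with respect to $g(t)$ and the estimates afterwards transferred back to $g$.

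Next, since $(X,h)$ has $C^{1,\alpha}$-harmonic radius bounded below by a constant depending only on $n,v,\rho$, I would cover $X$ by harmonic coordinate charts $(\phi_\alpha:U_\alpha\to \mathbb{R}^m)$ of uniform size. Given a GH approximation $\Phi:M\to X$ and using the bounded sectional curvature of $g(t)$, build a local smooth map $f_{\alpha,t}:f_{\alpha,t}^{-1}(U_\alpha)\to U_\alpha$ approximating $\phi_\alpha\circ\Phi$ via the equivariant smoothing scheme of Cheeger-Fukaya-Gromov (cf.\ \cite{Fukaya1987,CFG1992}). Then glue the local maps into a global smooth map $f_t:M\to X$ using a partition of unity subordinate to $\{U_\alpha\}$ and Karcher's center of mass with respect to $h$; the latter is well defined since the harmonic-radius lower bound yields a positive convexity radius on $(X,h)$. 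Properties \eqref{thm-fibration-1} and \eqref{thm-fibration-2} then follow from the almost-isometry of $\Phi$ together with the smoothing estimates, and \eqref{thm-fibration-4} follows from the local nilpotent structure on the Reifenberg universal covers as in \cite{HKRX2020,HW2020-2}.

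The main obstacle is \eqref{thm-fibration-3}: a second fundamental form bound $|\nabla^2 f|\le C(n,\rho,v)$ whose constant is independent of the smoothing parameter $t$. My approach is to realize $f_t$ as a convolution of a Lipschitz GH approximation against a smoothing kernel of fixed width $\sim \rho/2$ depending only on $n,\rho,v$, so that $|\nabla^{g(t)}df_t|$ is uniformly bounded in $t$; the $C^{0}$-closeness $\|\nabla^g-\nabla^{g(t)}\|_g\to 0$ then transfers this into a bound on $|\nabla^g df_t|$ uniform in $t$. Passing to the limit as $t\to 0$ via Ascoli-Arzelà yields a $C^{1,\alpha}$-map $f:M\to X$, which a further regularization using the harmonic-coordinate structure on $(X,h)$ upgrades to a $C^{\infty}$-smooth fibration satisfying all four conclusions with constants depending only on $n,\rho,v$.
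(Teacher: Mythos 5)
Your strategy—smooth $g$ to $g(t)$ with $|\sec(g(t))|\le K(t)$, run the Cheeger--Fukaya--Gromov construction for $g(t)$, and transfer the estimates back to $g$—is precisely the route this paper rules out for obtaining Theorem \ref{thm-fibration}, and the step where it breaks is your claimed smoothing estimate $\|\nabla^{g}-\nabla^{g(t)}\|_{g}\le\tau(t)\to 0$. Neither smoothing method provides this: the Ricci-flow smoothing of \cite{DWY1996} gives only $|g(t)-g|_g\le 4t$ with $|\operatorname{Rm}(g(t))|\le C t^{-1/2}$ and no connection closeness uniform as $t\to0$, and in the embedding method of \cite{PWY1999} the bound $|\nabla^{g(t)}-\nabla^{g}|_{g}\le C(t,n,\alpha,Q)$ has $C$ \emph{blowing up} as $t\to 0$, not tending to $0$. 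Without a uniform $C^1$-control between $g$ and $g(t)$, an almost Riemannian submersion and a Hessian bound for $g(t)$ say nothing about \eqref{thm-fibration-1} and \eqref{thm-fibration-3} for $g$; the introduction states explicitly that such a transfer ``cannot be expected.'' There are two further quantitative obstructions in your scheme: the CFG construction with respect to $g(t)$ only exists when $K(t)^{1/2}d_{GH}((M,g),(X,h))$ is small, so the admissible $\epsilon$ degenerates as $t\to 0$ and you cannot hold $\epsilon\le\epsilon_0(n,\rho,v)$ fixed while letting $t\to0$ for the Arzel\`a--Ascoli limit; and the second fundamental form bound the CFG scheme actually yields is of order $K(t)^{1/2}$ (compare Lemma \ref{lemma-regularity-DWY}, where $|\nabla^2 f_t|\le C(n,\rho,v)K^{1/2}(t)$), since the map must be regularized at the curvature scale of $g(t)$—a ``fixed-width kernel of size $\sim\rho/2$'' is not available in that construction, so uniformity of $|\nabla^{g(t)}df_t|$ in $t$ is not justified.

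The proof this theorem actually rests on (in \cite{JKX2022Convergence}) avoids smoothing the total space altogether: one works directly with the original metric $g$, using the $(\delta,\rho)$-Reifenberg local covering geometry to get uniform $C^{1,\alpha}$-harmonic radius on local universal covers, builds Cheeger--Colding $\delta$-splitting maps there, and glues them into a global fibration over the $C^{1,\alpha}$-manifold $(X,h)$; this is what produces \eqref{thm-fibration-1}--\eqref{thm-fibration-3} with constants intrinsic to $g$ and independent of any smoothing parameter (the infranil fiber statement \eqref{thm-fibration-4} then comes from the local nilpotent structure as in \cite{HKRX2020,HW2020-2}, which is the one part of your outline that is sound). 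If you want to salvage your approach, you would need a smoothing with uniform $C^1$ (indeed $C^{1,\alpha}$) closeness to $g$ at a definite scale, which is not what the cited smoothing theorems give under Ricci bounds alone.
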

The fibration in Theorem \ref{thm-fibration} is construced via gluing Cheeger-Colding's $\delta$-splitting maps \cite{CC1996,CC1997I} together; see \cite{JKX2022Convergence}. According to \cite{CFG1992},  \eqref{thm-fibration-1}-\eqref{thm-fibration-4} are crucial in describing the geometric relation between $(M,g)$ and $(X,h)$.

On the other hand, by the smoothing methods (e.g., Hamilton's Ricci flow \cite{Hamilton1982} in \cite{DWY1996} and embedding to Hilbert spaces by PDEs \cite{Abresch1988} in \cite{PWY1999}), there is $t_0=t_0(n,\rho)>0$ and a positive function $K(t)$ (depending on the smoothing methods) such that for any $0<t\le t_0$, there are nearby metrics $g(t)$ on $(M,g)$ in Theorem \ref{thm-unique-fibration} that are $e^t$-bi-Lipschitz equivalent to $g$ and admit a uniformly bounded sectional curvature $K(t)$ depending on $t$, where $K(t)\to +\infty$ as $t\to 0$. By blowing up the metric $K(t)g(t)$ and by the fibration theorems under $|\sec|\le 1$ in \cite{Fukaya1987},\cite{CFG1992}, one derives
fibrations $f_t$ from $(M,K(t)g(t))$ to $X$ with the regularities \eqref{thm-fibration-1}-\eqref{thm-fibration-4} satisfied by $K(t)g(t)$ and a new metric on $X$, provided that  $(M,g)$ is sufficiently Gromov-Hausdorff close to $(X,h)$ (depending on $K(t)$).

It should be pointed out that, due to the lack of a global and uniform $C^1$-control between $g$ and $g(t)$ via those smoothing methods above, it cannot be expected that fibrations constructed with respect to the smoothed metrics $g(t)$ still satisfies similar regularities \eqref{thm-fibration-1} and \eqref{thm-fibration-3} with respect to the original metric $g$; see \S3.1 and \S3.2.

The first part of this paper is to show that all fibrations above constructed by different methods are isomorphic to each other, and hence are canonically determined by the original metric. Recall that two fibrations $f_i:(M_i,g_i)\to (X_i,h_i)$ $(i=1,2)$ (denoted as $(M_i,X_i,f_i)$ for convenience) are \emph{isomorphic} if there are diffeomorphisms $\Phi:M_1\to M_2$ and $\Psi:X_1\to X_2$ such that $\Psi\circ f_1=f_2\circ \Phi$.

We first show that fibrations satisfying regularities in Theorem \ref{thm-fibration} are isomorphic to each other.
\begin{theorem}\label{thm-unique-fibration}
	Given $\rho,v>0,$ and positive integers $n, m(\leq n)$, there exist constants $\delta(n)>0$ and $\epsilon_0=\epsilon_0(n,\rho,v)>0$ such that for any $0<\epsilon\leq \epsilon_0$, the following holds.
	\par
	Let $(M,g)$ and $(X,h)$ be as in Theorem \ref{thm-fibration}.
	Then any two fibrations $(M,X,f_j)$ $(j=1,2)$ with
	the regularities \eqref{thm-fibration-1}-\eqref{thm-fibration-3} are isomorphic.
\end{theorem}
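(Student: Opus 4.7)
The plan is to construct diffeomorphisms $\Phi:M\to M$ and $\Psi:X\to X$, both $C^{0}$-close to the identity, satisfying $\Psi\circ f_1=f_2\circ\Phi$. The crucial pointwise estimate driving everything is the following: for each $x\in M$, the fiber $f_1^{-1}(f_1(x))$ has intrinsic $g$-diameter $\le C(n,\rho,v)\epsilon$ by \eqref{thm-fibration-2}, so the $\varkappa(\epsilon\,|\,n)$-almost isometric property \eqref{thm-fibration-1} applied to $f_2$ gives $f_2(f_1^{-1}(f_1(x)))\subset B_{e^{\varkappa}C\epsilon}(f_2(x))$ in $(X,h)$, and symmetrically with the roles of $f_1,f_2$ swapped. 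Thus the two fibrations are pointwise close in a quantitative sense.

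For each $p\in X$, define $\Psi(p)$ as the Riemannian center of mass in $(X,h)$ of the image $f_2(f_1^{-1}(p))$, weighted by the normalized Riemannian volume of the fiber $f_1^{-1}(p)$. By the estimate above the support sits in an $h$-ball of radius $O(\epsilon)$, and by \cite{JKX2022Convergence} the $C^{1,\alpha}$-harmonic radius of $(X,h)$ is bounded below by a constant depending on $n,\rho,v$; for $\epsilon$ small the center of mass is then well-defined and depends smoothly on $p$, using \eqref{thm-fibration-3} and smooth dependence of the $f_1$-fibers on the base. This produces a smooth map $\Psi:X\to X$ with $\|\Psi-\mathrm{id}_X\|_{C^{0}}=O(\epsilon)$. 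Next, since $d_h(\Psi(f_1(x)),f_2(x))=O(\epsilon)$ for every $x\in M$, the target fiber $f_2^{-1}(\Psi(f_1(x)))$ passes within $g$-distance $O(\epsilon)$ of $x$; define $\Phi(x)$ to be the nearest point of this fiber to $x$, so that $f_2\circ\Phi=\Psi\circ f_1$ by construction. With $\Phi,\Psi$ both $C^{0}$-close to the identity, smoothness of the constructions together with a standard implicit function theorem / degree argument on closed manifolds upgrades them to diffeomorphisms.

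The main obstacle is the well-posedness and smoothness of the nearest-point projection defining $\Phi$: the usual tubular-neighborhood theorem requires sectional curvature bounds on $(M,g)$, which are unavailable under only bounded Ricci. The plan is to handle this by passing to the Riemannian universal cover of a small ball around $x$, where the Reifenberg assumption makes the cover quantitatively close to Euclidean at scale $\rho$ and the smoothed metrics of \cite{DWY1996,PWY1999} supply sectional curvature bounds on this cover at the scale relevant to the projection. Lifting the $f_2$-fibers to the cover, they become $C^{1}$-graphs over near-Euclidean horizontal subspaces with second fundamental form $\le C(n,\rho,v)$ by \eqref{thm-fibration-3}; at scale $\epsilon$ the squared $g$-distance from the lift $\tilde x$ to the lifted target fiber is strictly convex with a unique smooth minimizer, which descends to give $\Phi(x)$. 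A secondary check, easy to verify, is that different natural choices of the reference probability measure on $f_1^{-1}(p)$ (Riemannian volume of $g$, invariant Haar measure on the infra-nilmanifold, etc.) perturb $\Psi$ by $O(\epsilon)$ and produce the \emph{same} isomorphism class $(\Phi,\Psi)$.
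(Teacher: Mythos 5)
Your overall architecture (a center-of-mass map $\Psi$ on the base, then a fiber-to-fiber map $\Phi$, then non-degeneracy) parallels the paper, but the step you yourself flag as the main obstacle is where the argument genuinely breaks. You propose to justify the nearest-point projection onto $f_2^{-1}(\Psi(f_1(x)))$ by importing sectional curvature bounds from the smoothed metrics of \cite{DWY1996,PWY1999}. This cannot work as stated: those metrics $g(t)$ are only $e^t$-bi-Lipschitz to $g$, with no uniform $C^1$-closeness (a point the paper stresses explicitly in the introduction), so the bound \eqref{thm-fibration-3} on $\nabla^2 f_2$, which holds with respect to $g$, does not transfer to a second fundamental form bound of the $f_2$-fibers with respect to $g(t)$; conversely, the $g(t)$-curvature bounds say nothing about the $g$-geometry of your projection. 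Working directly with $g$ on the local universal cover does not save the step either: strict convexity of the squared distance to a lifted fiber, uniqueness of the foot point, and smoothness of the projection at scale comparable to the tube radius are second-order statements (focal-radius/Hessian comparison), and they are not controlled by a $C^{1,\alpha}$-harmonic radius bound plus bounded Ricci curvature. This is precisely why the paper never uses a nearest-point projection in $M$: it first smooths the \emph{base} metric (Theorem \ref{thm-smoothing}) to get $|\sec|\le 1$ and an injectivity radius bound on $X$, defines $\Psi$ by center of mass there, and then defines $\Phi$ by $f_1$-horizontal lifting of base geodesics, which only uses the regularities \eqref{thm-stability-almost-Riem-submerison}--\eqref{thm-stability-second-fundamental-form-diameter} with respect to the original metric. (The same remark applies to your definition of $\Psi$: center of mass in the merely $C^{1,\alpha}$ metric $h$ needs convexity of squared distance, which is again second-order information; the paper sidesteps this by passing to $h(t)$.)

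The second gap is the final upgrade to diffeomorphisms. $C^0$-closeness to the identity plus smoothness does not yield injectivity or immersivity; a degree argument gives surjectivity at best. What is actually needed is a quantitative lower bound on $d\Psi$ and $d\Phi$, and this is the technical heart of the paper's proof: non-degeneracy of $d\Phi_1$ requires showing that the vertical distributions $\mathcal V_{f_1}$ and $\mathcal V_{f_2}$ are close (Proposition \ref{key-proposition}), which is proved by a blow-up/contradiction argument using the $C^{1,\alpha}$-convergence of local universal covers and the rescaling-invariant smallness $\left|\sndf\right|\cdot\diam$ of the fibers, and non-degeneracy of $d\Psi$ further uses the mean-curvature-times-diameter smallness in the center-of-mass differential. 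Your proposal contains no substitute for this step, so even granting the projection, the claim that $(\Phi,\Psi)$ is an isomorphism of fibrations is not established.
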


Furthermore, by Theorem \ref{thm-stability-nearby-metric} and Lemma \ref{lem-small-diameter} below, once a fibration over $X$ is constructed for a nearby Lipschitz equivalent metric $g(t)$ with a bounded sectional curvature $K(t)$, including those in \cite{DWY1996} and \cite{PWY1999}, it is also isomorphic to that of Theorem \ref{thm-fibration}.
A sufficient condition for the existence of fibrations with respect to a smoothed metric is as follows.

\begin{theorem}\label{thm-unique-smoothed}
	Let $(M,g)$ and $(X,h)$ be as in Theorem \ref{thm-fibration}. Let $g(t), t\in (0,1]$ be a family of smooth metrics on $M$ provided by some smoothing method such that
	\begin{equation}\label{ineq-smoothed-metrics}
	e^{-t}g\le g(t)\le e^tg,\quad\text{and}\quad |\sec(M,g(t))|\le K(t).
	\end{equation} There is $t_0(n,\rho,v)>0$ such that if $K(t)^{1/2}d_{GH}((M,g),(X,h))\le t_0(n,\rho,v)$ and $0<t\le t_0(n,\rho,v)$, then a fibration $f_t:(M,g(t))\to X_t$ with respect to bounded sectional curvature $K(t)$ exists and is isomorphic to $(M,X,f)$.
\end{theorem}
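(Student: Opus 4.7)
The plan is to construct $f_t$ by applying the Cheeger-Fukaya-Gromov fibration theorem to the rescaled metric $K(t)g(t)$, which has $|\sec|\le 1$, and then to identify $f_t$ with the canonical fibration $f$ of Theorem \ref{thm-fibration} via Theorem \ref{thm-unique-fibration}, after transferring the CFG-regularities of $f_t$ from $g(t)$ back to $g$ through the bi-Lipschitz equivalence \eqref{ineq-smoothed-metrics}. Setting $\lambda=K(t)^{1/2}$, the bi-Lipschitz bound gives
\[
d_{GH}\bigl((M,\lambda^2 g(t)),(M,\lambda^2 g)\bigr)\le C(e^{t}-1)\,\lambda\,\diam(M,g),
\]
while $\lambda\,d_{GH}((M,g),(X,h))\le t_0$ by hypothesis. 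Since $\diam(M,g)$ is a priori bounded by compactness of $X$, if $t_0=t_0(n,\rho,v)$ is sufficiently small then $(M,\lambda^2 g(t))$ is uniformly Gromov-Hausdorff close to $(X,\lambda^2 h)$, and Fukaya's fibration theorem under bounded sectional curvature delivers a smooth fibration $f_t\colon (M,g(t))\to X_t$ with $X_t$ diffeomorphic to $X$, enjoying the regularities \eqref{thm-fibration-1}--\eqref{thm-fibration-4} with respect to $g(t)$ and some $C^{1,\alpha}$-metric $h_t$ on $X_t$.

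To invoke Theorem \ref{thm-unique-fibration}, I next verify \eqref{thm-fibration-1}--\eqref{thm-fibration-3} for $f_t$ with respect to the \emph{original} metric $g$. Property \eqref{thm-fibration-1} transfers directly: the bi-Lipschitz bound $e^{-t}g\le g(t)\le e^{t}g$ introduces only an additional factor $e^{t/2}$ in the almost-Riemannian-submersion estimate, which for $t\le t_0$ remains within the $\varkappa(\epsilon\,|\,n)$-tolerance. Property \eqref{thm-fibration-2} (small $g$-intrinsic fiber diameter) is exactly the conclusion of Lemma \ref{lem-small-diameter}. Property \eqref{thm-fibration-3} is the content of Theorem \ref{thm-stability-nearby-metric}, providing the $g$-Hessian bound $|\nabla^{g,2}f_t|\le C(n,\rho,v)$ despite the absence of $C^{1}$-control between $g$ and $g(t)$. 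Once these three regularities are secured, both $f$ and $f_t$ (the latter composed with a fixed diffeomorphism $X_t\cong X$) are fibrations of $(M,g)$ into a manifold diffeomorphic to $X$ satisfying \eqref{thm-fibration-1}--\eqref{thm-fibration-3}, and Theorem \ref{thm-unique-fibration} yields diffeomorphisms $\Phi\colon M\to M$ and $\Psi\colon X\to X_t$ with $\Psi\circ f=f_t\circ\Phi$, the desired isomorphism.

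The hardest step will be the $g$-Hessian bound for $f_t$: the bi-Lipschitz bound on $g$ versus $g(t)$ gives no control on the difference of Levi-Civita connections, so $|\nabla^{g,2}f_t|$ cannot be compared directly with the CFG bound on $|\nabla^{g(t),2}f_t|$ that comes from the bounded-sectional-curvature theory on $(M,\lambda^2 g(t))$. Theorem \ref{thm-stability-nearby-metric} is precisely designed to close this gap, presumably by a localized re-construction of $f_t$ via $g$-harmonic averaging or by re-gluing Cheeger-Colding $\delta$-splitting maps in the spirit of the construction behind Theorem \ref{thm-fibration}, in a manner that is stable under bi-Lipschitz perturbation of the background metric. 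A secondary subtlety in Step 1 is that both hypotheses $K(t)^{1/2}d_{GH}((M,g),(X,h))\le t_0$ and $t\le t_0$ must be used simultaneously, together with the a priori diameter bound on $(M,g)$, to make the rescaled GH-distance uniformly small and render Fukaya's fibration theorem applicable independently of the smoothing parameter $t$.
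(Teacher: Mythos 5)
There are two genuine gaps. First, your existence step relies on the claim that $(M,K(t)g(t))$ is globally Gromov--Hausdorff close to $(X,K(t)h)$ once $t_0$ is small, using the a priori diameter bound on $(M,g)$. But your own estimate gives $d_{GH}\bigl((M,K(t)g(t)),(M,K(t)g)\bigr)\lesssim K(t)^{1/2}\,t\,\diam(M,g)$, and the hypotheses only control $t\le t_0$ and $K(t)^{1/2}d_{GH}((M,g),(X,h))\le t_0$ \emph{separately}; nothing bounds the product $K(t)^{1/2}t$ (take $t=t_0$ fixed and $K(t_0)$ huge, which is allowed since $\epsilon$ may be tiny). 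So the rescaled spaces need not be globally GH close, the diameter bound does not save you, and Fukaya/CFG cannot be invoked this way. This is exactly the obstruction the paper flags; its fix is to observe (following \cite{HKRX2020}) that the $\epsilon$-GHA $\alpha:(M,g)\to(X,h)$ becomes a \emph{local} $(1,\,8t+K(t)^{1/2}\epsilon)$-GHA after rescaling, i.e.\ an almost isometry on every unit ball, which suffices for the CFG construction; the fiber-diameter bound $\diam_{g(t)}f_t^{-1}(p)\le C(n,\rho,v)\epsilon$ then needs the separate ball-counting argument of Lemma \ref{lemma-regularity-DWY}, not a direct citation of \cite{CFG1992} (and Lemma \ref{lem-small-diameter} is about the nilpotent-structure comparison in Section 4, not this step).

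Second, your identification step is routed incorrectly: you propose to verify \eqref{thm-fibration-1}--\eqref{thm-fibration-3} for $f_t$ \emph{with respect to the original metric} $g$ and then apply Theorem \ref{thm-unique-fibration}, with the $g$-Hessian bound $|\nabla^{g,2}f_t|\le C$ allegedly supplied by Theorem \ref{thm-stability-nearby-metric}. Theorem \ref{thm-stability-nearby-metric} provides no such bound: it is a stability statement whose \emph{hypotheses} \eqref{thm-stability-almost-Riem-submerison}--\eqref{thm-stability-second-fundamental-form-diameter} must be checked for each fibration in its own metric, and the paper states explicitly that, lacking $C^1$-control between $g$ and $g(t)$, one cannot expect \eqref{thm-fibration-1} or \eqref{thm-fibration-3} for $f_t$ relative to $g$ at all. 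The correct (and the paper's) route avoids any transfer of the Hessian: rescale both metrics by $K(t)$, note that $K(t)g$ and $K(t)g(t)$ are $e^{t}$-Lipschitz equivalent by \eqref{ineq-smoothed-metrics}, check the scale-invariant regularities for $f$ in $K(t)g$ (the rescaled fiber diameter is $\le C\,K(t)^{1/2}\epsilon\le C t_0$, which is where the hypothesis $K(t)^{1/2}d_{GH}\le t_0$ is really used) and for $f_t$ in $K(t)g(t)$ (from Lemma \ref{lemma-regularity-DWY}), and apply Theorem \ref{thm-stability-nearby-metric} with $\eta=t$ directly to conclude the isomorphism. As written, your Step 2 rests on a bound that is neither available nor needed.
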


For the two concrete smoothing methods via Ricci flow \cite{DWY1996} and embedding $(M,g)$ into $L^2(M,g)$ \cite{PWY1999}, we prove that if both fibrations in Theorem \ref{thm-fibration} and for smoothed metrics $g(t)$ exist, where $g(t)$ may be definite away from $(M,g)$, then they are also isomorphic to each other; see \S3.1 and \S3.2 below.

The main goal of the second part of this paper is to prove there is a canonical nilpotent structure with respect to the original metric $(M,g)$ in Problem \ref{problem-uniqueness-of-nilpotent-structure}.
Let us recall that Cheeger-Fukaya-Gromov \cite{CFG1992} constructed a canonical nilpotent structure $(\mathfrak n, g_{\epsilon})$ on $(M,g)$ with $|\sec(M,g)|\le 1$, such that $\mathfrak n$ is a sheaf of nilpotent Lie algebra generated by vector fields, which is Killing for a $C^1$-nearby metric $g_{\epsilon}$. Moreover, the infinitesimal actions of $(\mathfrak n,g_{\epsilon})$ can be lifted to $g_{\epsilon}$-isometric actions by a simply connected nilpotent Lie group $\mathcal{N}$ on the universal cover of some neighborhood at any point, whose orbit space is $e^\epsilon$-bi-Lipschitz equivalent to $X$. Each orbit of $\mathfrak{n}$, which is defined to be the union of points along integral curves of local vector fields in $\mathfrak{n}$, coincides with the projection of $\mathcal{N}$'s orbit from the local universal cover. 

For $(M,g)$ in Problem \ref{problem-uniqueness-of-nilpotent-structure}, let $g(t)$ be the smoothed metric on $M$ via Ricci flow (see Theorem \ref{thm-smoothing-ricci-flow}). Then $|\Ric(M,g(t))|\le 2(n-1)$. Following Cheeger-Fukaya-Gromov \cite{CFG1992}, we will construct a niloptent Killing structure $(\mathfrak n_{t_0}, \bar g_{t_0})$ from $g(t_0)$ for some $t_0=t_0(n,\rho,v)>0$, whose underlying fibration is exactly that of $(M,g(t_0))$ provided by Theorem \ref{thm-fibration}. Moreover, for any $(M,g)$  close enough to $(X,h)$, $\bar g_{t_0}$ also admits $|\Ric(M, \bar g_{t_0})|\le 3(n-1)$ and a uniform Reifenberg local covering geometry.

\begin{theorem}\label{thm-existence-canonical-nilstr}
	There are $0<t_0(n,\rho,v),\epsilon_0(n,\rho,v)\le 1$ and $\rho_1(n,\rho)>0$ such that for $(M,g)$ and $(X,h)$ in Theorem \ref{thm-fibration}, if $d_{GH}((M,g),(X,h))\le \epsilon\le \epsilon_0$, then there is a nilpotent Killing structures $(\mathfrak n_{t_0}, \bar g_{t_0})$ on $(M,g)$ from $g(t_0)$, whose underlying fibration coincides with that provided by Theorem \ref{thm-fibration}, where $g(t)$ is the solution of Ricci flow equation with initial value $g$. Moreover, there is $\epsilon_1(t_0,n,\rho,v)>0$ such that if $d_{GH}((M,g),(X,h))\le \epsilon_1$, then $|\Ric(M,\bar{g}_{t_0})|\leq 3(n-1)$ and $(M,\bar{g}_{t_0})$ has $(3\delta,\rho_1)$-Reifenberg local covering geometry.
\end{theorem}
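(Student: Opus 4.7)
The plan is to produce $(\mathfrak n_{t_0},\bar g_{t_0})$ by feeding a Ricci-flow-smoothed metric into the Cheeger-Fukaya-Gromov canonical nilpotent Killing structure construction, and then invoke Theorem \ref{thm-unique-smoothed} to align its underlying fibration with that of Theorem \ref{thm-fibration}. First, I would apply the Dai-Wei-Ye Ricci flow smoothing (Theorem \ref{thm-smoothing-ricci-flow}) to obtain a family $g(t)$, $t\in(0,t_0]$, with $e^{-t}g\le g(t)\le e^tg$, $|\sec(M,g(t))|\le K(t)$, and $|\Ric(M,g(t))|\le 2(n-1)$, for some $t_0=t_0(n,\rho,v)>0$ small. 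After rescaling by $K(t_0)$, the metric $K(t_0)g(t_0)$ has $|\sec|\le 1$, and its Gromov--Hausdorff distance to $K(t_0)^{1/2}(X,h)$ is at most $K(t_0)^{1/2}e^{t_0}\epsilon$, which becomes arbitrarily small by first fixing $t_0$ and then shrinking $\epsilon_0$.

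Next, applying \cite{CFG1992} to $(M,K(t_0)g(t_0))$ yields a sheaf $\mathfrak n_{t_0}$ of nilpotent Killing vector fields, an invariant nearby metric $\tilde g_{t_0}$ which is $C^k$-close to $K(t_0)g(t_0)$ for every $k$, and a CFG-fibration $\tilde f_{t_0}$ with infra-nilmanifold fibers. Setting $\bar g_{t_0}=K(t_0)^{-1}\tilde g_{t_0}$, the sheaf $\mathfrak n_{t_0}$ is Killing for $\bar g_{t_0}$, and $\tilde f_{t_0}:(M,g(t_0))\to X_{t_0}$ satisfies the regularities \eqref{thm-fibration-1}--\eqref{thm-fibration-4} with respect to $g(t_0)$. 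Since both $\tilde f_{t_0}$ and the fibration $f$ of Theorem \ref{thm-fibration} satisfy the hypothesis of Theorem \ref{thm-unique-smoothed} for the same family $g(t)$, that theorem supplies an isomorphism $(\Phi,\Psi)$ between them; pushing $(\mathfrak n_{t_0},\bar g_{t_0})$ forward by $\Phi$, which preserves the Killing condition, gives a nilpotent Killing structure on $(M,g)$ whose underlying fibration coincides with $f$.

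For the quantitative bounds on $\bar g_{t_0}$, I would track closeness through the construction. From $\|\bar g_{t_0}-g(t_0)\|_{C^2(g(t_0))}\le\varkappa(\epsilon\mid n,\rho,v,t_0)$, one has $|\Ric(\bar g_{t_0})-\Ric(g(t_0))|\le\varkappa(\epsilon\mid n,\rho,v,t_0)$, hence $|\Ric(M,\bar g_{t_0})|\le 3(n-1)$ as soon as $\epsilon\le\epsilon_1(t_0,n,\rho,v)$. For the Reifenberg bound, $\bar g_{t_0}$ is $e^{t_0+\varkappa(\epsilon)}$-bi-Lipschitz to $g$; choose $\rho_1=\tfrac12 e^{-2t_0}\rho$. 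For any $x\in M$ and a Reifenberg point $\tilde x$ in the $g$-universal cover of $B_\rho(x)$, lifting the bi-Lipschitz comparison identifies (via the natural identity lift) the Riemannian universal cover of $B_{\rho_1}^{\bar g_{t_0}}(x)$ with a subset of the $g$-universal cover; for every $0<r\le\rho_1$, the triangle inequality then yields
\[
d_{GH}\bigl(B^{\bar g_{t_0}}_r(\tilde x),B_r^n(0)\bigr)\le d_{GH}\bigl(B^{\bar g_{t_0}}_r(\tilde x),B^g_r(\tilde x)\bigr)+\delta r\le 2\varkappa(\epsilon,t_0)r+\delta r\le 3\delta r,
\]
after possibly further shrinking $\epsilon$, which gives the $(3\delta,\rho_1)$-Reifenberg local covering geometry.

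The hard part will be the nested quantifier bookkeeping: $t_0$ must first be chosen small to guarantee $|\Ric(g(t_0))|\le 2(n-1)$ and a suitable covering geometry for $g(t_0)$; then $K(t_0)$ is fixed and possibly large, so $\epsilon_0$ must be shrunk accordingly to ensure simultaneously that Theorem \ref{thm-unique-smoothed} applies and that the Cheeger--Fukaya--Gromov $C^k$-error, produced at the unit-curvature scale and then un-rescaled by $K(t_0)^{-1}$, remains small enough to preserve the Ricci and Reifenberg bounds of $g$. The identification of the deck-transformation group of the $\bar g_{t_0}$-universal cover with that of the $g$-universal cover in the last step needs the radii to be comparable under the bi-Lipschitz bound, which is precisely arranged by the choice of $\rho_1$.
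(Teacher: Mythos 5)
Your overall strategy (Ricci-flow smoothing, then the Cheeger--Fukaya--Gromov construction at the rescaled scale, then the stability/uniqueness theorems to align the underlying fibration with that of Theorem \ref{thm-fibration}, then $C^2$-closeness of the averaged metric to get the Ricci and Reifenberg bounds) is the same as the paper's, but there are two concrete gaps in the way you set it up.

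First, the estimate $d_{GH}\bigl((M,K(t_0)g(t_0)),\,K(t_0)^{1/2}(X,h)\bigr)\le K(t_0)^{1/2}e^{t_0}\epsilon$ is false in general. The smoothing only gives $e^{-t_0}g\le g(t_0)\le e^{t_0}g$, which is a multiplicative (bi-Lipschitz) control; the induced additive distance distortion is of order $(e^{t_0/2}-1)\diam(M,g)\approx t_0\,\diam(X,h)$, so after blowing up the Gromov--Hausdorff error contains a term of order $K(t_0)^{1/2}\,t_0\,\diam(X,h)$ that cannot be removed by shrinking $\epsilon$ alone. Since the theorem asserts $t_0,\epsilon_0$ depend only on $(n,\rho,v)$ and not on $\diam(X,h)$, your route as written either fails or forces a diameter dependence. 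This is exactly the difficulty the paper flags before Lemma \ref{lemma-regularity-DWY}: without a diameter bound, $(M,K(t)g(t))$ is not globally GH-close to the blown-up base, and one must instead work with local $(1,\varkappa)$-Gromov--Hausdorff approximations in the sense of \cite{HKRX2020} (the $(1,8t+\epsilon)$-GHA used in \S\ref{construction-canonical-nilpotent-structure}), or equivalently construct the fibration for $(M,g(t_0))$ at the \emph{unrescaled} scale via Theorem \ref{thm-fibration}, which only needs $8t_0+\epsilon\le\epsilon_0$.

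Second, you apply the CFG fibration construction with base $(X,K(t_0)h)$, but $h$ is only a $C^{1,\alpha}$ metric with no sectional curvature or injectivity radius bounds, so \cite[Theorem 2.6]{CFG1992} does not apply to it directly; and the general CFG N-structure theorem without a base does not by itself give a pure structure whose orbit space is $X$. The paper supplies the missing ingredient by smoothing the base (Theorem \ref{thm-smoothing}, Lemma \ref{lem-smoothed-limit}) to get $h(t_0)$ with $|\sec(h(t_0))|\le C(n,\rho,v)t_0^{-1/2}$ and a definite injectivity radius after rescaling, and then builds the fibration $\tilde f_{t_0}:(M,g(t_0))\to (X,h(t_0))$ by Theorem \ref{thm-fibration} (bounded Ricci plus Reifenberg covering geometry, no base curvature bound needed) before running the CFG \S3--\S4 averaging on it; the alignment with $f$ is then Theorem \ref{thm-stability-nearby-metric} rather than Theorem \ref{thm-unique-smoothed}, though that difference is cosmetic. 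A final bookkeeping point: in your Reifenberg estimate the distortion of $r$-balls contains a term of order $t_0 r$ coming from the $e^{t_0}$-bi-Lipschitz comparison, which is not removed by shrinking $\epsilon$; you must also choose $t_0$ small relative to $\delta(n)$ (the paper takes $t_0\le\tfrac12\ln(1+\delta)$ so that $g(t_0)$ has $(2\delta,2\rho_1)$-Reifenberg covering geometry, and then passes from $g(t_0)$ to $\bar g_{t_0}$ by the $C^2$-closeness \eqref{C0-closeness-of-gt-and-bar-gt}--\eqref{C-infty-closeness-of-gt-and-bar-gt}).
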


Next, we will prove in the last main result (Theorem \ref{thm-uniqueness-canonical-nilstr} below) that
all nearby metrics $g_{\eta}$ on $M$, if it arises a nilpotent Killing structure $(\mathfrak{n}_\eta,g_\eta)$ under a sectional curvature bound, whose quotient space is almost isometric to $(X,h)$, then $(\mathfrak{n}_\eta, g_\eta)$ is isomorphic to the nilpotent structure $(\mathfrak n_{t_0}, \bar g_{t_0})$ in Theorem \ref{thm-existence-canonical-nilstr}.

We say that two nilpotent Killing structures  $(\mathfrak{n}_i,\bar g_i), i=1,2,$ on $M$ with regular quotient spaces $X_i=(M, \bar g_i)/\mathfrak{n}_i$ are equivalent
if there exists
a bundle isomorphism $(\Phi, \Psi)$ between their underlying fibrations $(M,X_i,f_i)$ that also preserves $\mathfrak{n}_i$. It is equivalent to that, for any $x\in M,$ there exists a neighborhood $U_x$ such that the two actions induced by $\mathfrak{n}_i$ are conjugate by the lifting of $\Phi$ on the universal cover of $U_x.$

\begin{theorem}\label{thm-uniqueness-canonical-nilstr}
	There exist $0<\eta_0(n,\rho,v),\epsilon_0(n,\rho,v)\leq 1$ such that the following holds.
	
	Let $(M,g)$ and $(X,h)$ be as in Theorem \ref{thm-fibration} such that $d_{GH}((M,g),(X,h))\le \epsilon\le \epsilon_0$. Let $(\mathfrak{n}_\eta,g_\eta)$ be a nilpotent Killing structure on $M$ with respect to the sectional curvature bound of $g_\eta$, whose quotient space by $\mathfrak{n}_{\eta}$ is $(X,h_\eta)$, such that for $0<\eta\le \eta_0$,
	\begin{equation}\label{bi-lip-equiv}
	e^{-\eta} g\le g_\eta \le e^\eta g,\qquad e^{-\eta} h\le h_\eta \le e^\eta h,
	\end{equation}
	and after rescaling $g_\eta$ to admit $|\sec|\le 1$, the diameter of each $\mathfrak{n}_\eta$'s orbit $\le \epsilon_0$. Then $(\mathfrak{n}_\eta,g_\eta)$ is equivalent to the nilpotent Killing structure $(\mathfrak n_{t_0}, \bar g_{t_0})$ constructed in Theorem \ref{thm-existence-canonical-nilstr}.
\end{theorem}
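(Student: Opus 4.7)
The strategy proceeds in two stages: first, establish that the two structures have isomorphic underlying fibrations via Theorem \ref{thm-unique-smoothed}; second, upgrade this fibration isomorphism to a conjugation of the local nilpotent actions on universal covers using the rigidity of simply transitive nilpotent actions.

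For the first stage, observe that by Theorem \ref{thm-existence-canonical-nilstr} the underlying fibration of $(\mathfrak{n}_{t_0}, \bar g_{t_0})$ is the canonical $f:M\to X$ of Theorem \ref{thm-fibration}. For $(\mathfrak{n}_\eta, g_\eta)$, the sectional curvature bound of $g_\eta$ together with the small-orbit hypothesis (after rescaling to $|\sec|=1$) is exactly the setting of Fukaya's / Cheeger-Fukaya-Gromov's classical fibration theorem, producing an underlying fibration $f_\eta:(M, g_\eta)\to (X, h_\eta)$ satisfying the regularities \eqref{thm-fibration-1}--\eqref{thm-fibration-3} with respect to $g_\eta$ and $h_\eta$; by \eqref{bi-lip-equiv} these regularities persist with respect to $g,h$ with universally controlled constants. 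Viewing $g_\eta$ as a smoothed metric with sectional curvature bound $K=K(\eta)$, the small-orbit hypothesis translates into $K^{1/2}\,d_{GH}((M,g),(X,h))\le t_0$, so that Theorem \ref{thm-unique-smoothed} produces a bundle isomorphism $(\Phi, \Psi)$ between $f_\eta$ and $f$.

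For the second stage, fix $x\in M$ and a neighborhood $U_x$ on whose universal cover $\pi: \widetilde{U_x}\to U_x$ both sheaves lift to isometric actions by simply connected nilpotent Lie groups $\mathcal N_\eta$ (for $\pi^*g_\eta$) and $\mathcal N_{t_0}$ (for $\pi^*\bar g_{t_0}$). Their orbits coincide with the connected components of preimages of fibers of $f_\eta$ and $f$ respectively, so the lift $\tilde\Phi$ of $\Phi$ carries $\mathcal N_\eta$-orbits onto $\mathcal N_{t_0}$-orbits; both orbits are diffeomorphic to the same simply connected nilpotent Lie group (the universal cover of the common infra-nilmanifold fiber), identifying $\mathcal N_\eta \cong \mathcal N_{t_0}$ as abstract Lie groups. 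The remaining step is to modify $\tilde\Phi$, by left-multiplying on each orbit by a suitable element of $\mathcal N_{t_0}$, so that $\tilde\Phi$ intertwines the two actions. Since both are isometric for metrics mutually $e^{\eta+t_0}$-bi-Lipschitz through the common reference metric $g$, the rigidity of nilpotent Killing structures under $C^1$-close perturbations of the metric (in the spirit of Cheeger-Fukaya-Gromov \cite{CFG1992}) allows these local adjustments to be chosen coherently and descend to a global self-diffeomorphism of $M$ that sends $\mathfrak{n}_\eta$ to $\mathfrak{n}_{t_0}$.

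The main obstacle lies in the coherent gluing in the second stage: controlling the ambiguity in the orbit-by-orbit modifications (that is, the centralizer of the nilpotent action inside the isometry group of the fiber) so that the local conjugations assemble into a smooth global equivalence preserving the sheaf structure across chart overlaps. This requires applying CFG's rigidity theorem for two nearby bounded-sectional-curvature metrics on the same manifold to the pair $(g_\eta, \bar g_{t_0})$, using the bi-Lipschitz control \eqref{bi-lip-equiv} together with the uniform Ricci and Reifenberg bounds for $\bar g_{t_0}$ supplied by Theorem \ref{thm-existence-canonical-nilstr}.
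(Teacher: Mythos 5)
Your first stage breaks down at the reduction to Theorem \ref{thm-unique-smoothed}. That theorem requires $K^{1/2}\,d_{GH}((M,g),(X,h))\le t_0(n,\rho,v)$, and this is \emph{not} implied by the hypotheses of Theorem \ref{thm-uniqueness-canonical-nilstr}: there $\epsilon$ is only bounded by the fixed constant $\epsilon_0(n,\rho,v)$ while $K_\eta$ may be arbitrarily large, and the bi-Lipschitz bound \eqref{bi-lip-equiv} does not convert into Gromov--Hausdorff closeness at the blown-up scale, since $\diam(M,K_\eta g_\eta)$ has no uniform bound (multiplicative closeness gives additive errors proportional to the diameter). The small-orbit hypothesis controls the fibers of $f_\eta$, not the quantity $K_\eta^{1/2}d_{GH}((M,g),(X,h))$, so "the small-orbit hypothesis translates into $K^{1/2}d_{GH}\le t_0$" is false, and this is precisely the new difficulty the paper points out (no uniform diameter bound after rescaling to $|\sec|\le 1$). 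The paper's route is different: since $f_\eta:(M,K_\eta g_\eta)\to(X,K_\eta h_\eta)$ is a Riemannian submersion with $\epsilon_1$-small fibers, it yields a \emph{local} $(1,2\epsilon_1+4\eta)$-GHA from $(M,K_\eta g)$ to $(X,K_\eta h)$; repeating the covering argument from the proof of Lemma \ref{lemma-regularity-DWY} then gives Lemma \ref{lem-small-diameter}, i.e.\ the canonical fibration $f_{t_0}$ also has fibers of diameter $\le C(n,\rho,v)(\epsilon_1+\eta)$ measured in $K_\eta\bar g_{t_0}$. Only after this step do both fibrations satisfy the scale-invariant regularities at the common scale $K_\eta$, so that the stability Theorem \ref{thm-stability-nearby-metric} produces the bundle isomorphism $(\Phi_1,\Psi)$. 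Without an analogue of Lemma \ref{lem-small-diameter}, your two fibrations are not known to have comparable fiber sizes and none of the stability results applies.

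Your second stage leaves the actual work undone: you correctly identify the coherence of the orbitwise corrections as the main obstacle, but then appeal to "rigidity under $C^1$-close perturbations of the metric in the spirit of CFG," which is unavailable here --- $g_\eta$ and $\bar g_{t_0}$ are only $C^0$ (bi-Lipschitz) close, and CFG's local compatibility arguments presuppose diameter/GH control at the rescaled scale that is exactly what is missing. The paper's proof of Theorem \ref{thm-canonical-nilstr-two-metric} supplies the missing mechanism in three concrete steps: (i) identify $\mathcal N_\eta$ and $\mathcal N_{t_0}$ by Malcev rigidity through the common lattice $\Lambda=\pi_1(U)$, on which the two actions literally coincide; (ii) prove Lemma \ref{two-actions-C1-close} by a blow-up/contradiction convergence argument that the two actions $\rho_1(\mathcal N)$, $\rho_2(\mathcal N)$ on $\widetilde U$ are $\varkappa$-$C^1$-close (this must be \emph{proved}, using that the $\Lambda$-orbits become dense and that both actions are isometric for the respective pulled-back metrics; it cannot be read off from $C^0$-closeness of the metrics); and (iii) define the correction $\tilde\Phi_2$ as the Grove--Karcher center of mass of $[h]\mapsto \rho_1([h]^{-1})\circ\rho_2([h])$ over the unimodular quotient $\Lambda\setminus\mathcal N$, which exactly intertwines the two actions, is independent of the base point, and hence descends and globalizes to an affine bundle isomorphism, giving $\Phi=\Phi_1\circ\Phi_2$. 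Your sketch names the goal of step (iii) but contains neither the $C^1$-closeness input nor the averaging construction that makes the local conjugations canonical and hence globally consistent.
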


By Theorem \ref{thm-uniqueness-canonical-nilstr}, an interesting corollary is that, the nilpotent structure via a smoothed metric $g_\eta$ does not depend on the explicit sectional curvature bound of $g_\eta$. In particular, it shows that the nilpotent Killing structure $(\mathfrak n_{t_0}, \bar g_{t_0})$ constructed in Theorem \ref{thm-existence-canonical-nilstr} is uniquely determined by the original metric $g$, hence ``canonical".

Compared with the local compatibility of nilpotent structures in \cite{CFG1992}, Theorem \ref{thm-uniqueness-canonical-nilstr} deals with the case that, after rescaling to $|\sec|\le 1$, the diameter of $(M,K_\eta g_\eta)$ admits no uniform upper bound, where $K_\eta=\max\{\max|\sec(M, g_\eta)|, 1\} $. Here, the local Gromov-Hausdorff approximation (see Lemma \ref{lem-small-diameter} and paragraph before Lemma \ref{lemma-regularity-DWY} below) introduced from \cite{HKRX2020} plays an essential role.


The rest of the paper is organized as follows:
Sections \ref{proof-canonical-fibration} and \ref{Closeness-of-vertical-distributions} are devoted to the proof of Theorem \ref{thm-unique-fibration}, where the main technical results are two stability Theorems  \ref{thm-stability-nearby-metric} and \ref{thm-stability-lipequiv-metric}.
In Section \ref{Closeness-of-vertical-distributions}, we establish the closeness of vertical distributions which is required in the proofs of stability theorems.
In Section \ref{proof-DWY-unique-structure} we study the fibrations via smoothing methods and prove Theorem \ref{thm-unique-smoothed}.
Theorems \ref{thm-existence-canonical-nilstr} and \ref{thm-uniqueness-canonical-nilstr} are proved in Section \ref{unique-nilpotent-structure}, where canonical nilpotent Killing structures are  constructed and proved to be locally stable under nearby perturbation on the metric.
\par
The following notations will be used later in this paper.

$\bullet$\quad A map between two length metric spaces, $f:X\to Y,$ is called an $\epsilon$-Gromov-Hausdorff
approximation, briefly an $\epsilon$-GHA, if $f$ is an $\epsilon$-isometry and $f(X)$ is $\epsilon$-dense in $Y.$ A
sequence of Riemannian $n$-manifolds $M_i$ converges to a metric space $X$ in the Gromov-Hausdorff topology, $M_i\stackrel{d_{GH}}{\longrightarrow}X,$ if and only if there is a sequence of $\epsilon_i$-GHA from $M_i$ to $X, \epsilon_i\to 0$ as $i\to \infty.$

$\bullet$\quad $\varkappa(\epsilon\,|\,a,b,c,\dots)$ denotes a positive  function depending on $\epsilon,a,b,c,\dots$ such that after fixing $a,b,c,\dots$, $\varkappa(\epsilon\,|\,a,b,c,\dots)\to 0$ as $\epsilon\to 0$.

$\bullet$\quad $B_r(x,g)$ denotes the geodesic $r$-ball centered at $x$ with respect to metric $g.$

$\bullet$\quad $\diam_{g}f^{-1}(p)$ denotes the intrinsic diameter of $f^{-1}(p)$ under metric $g.$

$\bullet$\quad $\mathcal{V}_f$ (resp. $\mathcal{H}_f$) denotes the vertical (resp. horizontal) distribution associated to a (not necessarily Riemannian) submersion $f:(M,g)\to (N,h)$.
$\mathcal V_f(x)$ (resp. $\mathcal{H}_f(x)$) denotes the vertical (resp. horizontal) subspace at $x\in M$. $|\nabla^{2}f|_{g,h}$ denotes the $C^0$-norm of the second fundamental form of map $f$ measured in metrics $g$ and $h$.

$\bullet$\quad
The second fundamental form $\sndf_{f}$ of $f$-fibers and integrability tensor $A_f$ of $f$ are defined, respectively, to be the following tensors
\begin{align*}
&\sndf_{f}:\mathcal V_f(x)\times \mathcal V_f(x)\to \mathcal{H}_f(x), \quad \ \sndf_{f}(T,T)=(\nabla_{T}T)^{h}|_{x}\in \mathcal{H}_f(x),\\
&A_f:\mathcal H_f(x)\times \mathcal H_f(x)\to \mathcal V_f(x), \quad A_{f}(X,Y)=[X,Y]^v|_x\in \mathcal V_f(x).
\end{align*}
And $\left|\sndf \right|_{f^{-1}(p),g}$, $\left|A \right|_{f^{-1}(p),g}$ denote the norm of the tensors $\sndf_f$ and $A_f$ restricted on $f^{-1}(p)$ measured in $g$.

\par \textbf{Acknowledgment.}
Z. J. is supported partially by China Postdoctoral Science Foundation Grant No. 8206300494.
S. X. is supported in part by Beijing Natural Science Foundation Grant No. Z190003 and National Natural Science Foundation of China Grant No. 11871349.

\section{Stability of almost Riemannian submersions} \label{proof-canonical-fibration}
In this section we prove stability results on fibrations $(M,X_i,f_i)$ ($i=1,2$) that are almost Riemannian submersions satisfying certain regularities (see below) with respect to two $C^0$-nearby or Lipschitz equivalent metrics $g_i$.
Since we are dealing with different metrics $g_i$ on $M$, it is natural and also useful for applications to consider the case that base spaces $X_i$ may be different.

\begin{theorem}\label{thm-stability-nearby-metric}
	Given any $\rho,v>0$ and positive integers $n,m(\leq n)$, there are constants $\delta(n)>0$, $\epsilon_0=\epsilon_0(n,\rho,v)>0$ and $\eta_0=\eta_0(n,\rho,v)>0$ such that for any $0<\epsilon\leq \epsilon_0$ and $0\leq \eta\leq \eta_0$, the following holds.
\par
Let $(M,g_i)$ and $(X_i,h_i)$ be as in Theorem \ref{thm-fibration} and let $f_i:(M,g_i)\to (X_i,h_i)$ be two fibrations satisfying the following regularities.
\begin{enumerate}\numberwithin{enumi}{theorem}	
	\item\label{thm-stability-almost-Riem-submerison} $f_i$ is an $\epsilon$-almost Riemannian submersion;
	\item\label{thm-stability-diameter-of-fibres}  $\diam_{g_i}(F_{p_i})\leq \epsilon$ for any $f_i$-fiber $F_{p_i}=f_i^{-1}(p_i)$ over $p_i\in X_i$;
	\item\label{thm-stability-second-fundamental-form-diameter}
	$\left|\sndf\right|_{F_{p_i},g_i}\cdot \diam_{g_i}(F_{p_i})\leq \epsilon$, and
	$\left|A\right|_{F_{p_i},g_i}\cdot \diam_{g_i}(F_{p_i})\leq \epsilon.$
\end{enumerate}
If $g_1$ and $g_2$ are $e^{\eta}$-Lipschitz equivalent, i.e.,
	\begin{equation}\label{C0-close-metrics}
		e^{-\eta}g_2\le g_1\le e^{\eta}g_2,
	\end{equation}
then $(M,X_1,f_1)$ is isomorphic to $(M,X_2,f_2)$ by a pair $(\Phi,\Psi)$ of diffeomorphisms
$$
\begin{CD}
	(M,g_1) @>\Phi>> (M,g_2)\\
	@Vf_1VV @Vf_2VV\\
	(X_1,h_1)@>\Psi>>(X_2,h_2)
\end{CD}
$$
such that $\Psi$ is $4e^\eta$-bi-Lipschitz and $\Phi$ is $e^{\varkappa(\epsilon\,|\,n,\rho,v)+\eta}$-bi-Lipschitz.
\end{theorem}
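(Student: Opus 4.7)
The plan is to build the isomorphism $(\Phi,\Psi)$ explicitly, taking as a black box the closeness of the vertical distributions $\mathcal V_{f_1}(x)\approx \mathcal V_{f_2}(x)\subset T_xM$ that will be established in Section \ref{Closeness-of-vertical-distributions}. Combined with \eqref{C0-close-metrics}, this closeness means that the two fibrations differ only by a small perturbation of the fiber structure, and all that is needed is to exhibit the canonical correction and to compute its Lipschitz constant.

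First I would define $\Psi(p)$, for $p\in X_1$, to be the Riemannian center of mass in $(X_2,h_2)$ of the set $f_2(f_1^{-1}(p))$. Since $\diam_{g_1}f_1^{-1}(p)\le\epsilon$, the Lipschitz equivalence \eqref{C0-close-metrics} together with the $\epsilon$-almost Riemannian submersion property of $f_2$ give that $f_2(f_1^{-1}(p))$ has $h_2$-diameter of order $\epsilon$; by Theorem 0.1 of \cite{JKX2022Convergence} this falls well inside the $C^{1,\alpha}$-harmonic radius of $(X_2,h_2)$, so the center of mass is uniquely defined and smooth in $p$. Next, for $x\in M$ I would define $\Phi(x)$ by horizontally lifting, relative to $f_2$ and $g_2$, the minimizing $h_2$-geodesic from $f_2(x)$ to $\Psi(f_1(x))$, starting at $x$; the endpoint of this lift lies on $f_2^{-1}(\Psi(f_1(x)))$, so $f_2\circ\Phi=\Psi\circ f_1$ by construction. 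The lift exists because the geodesic has length $\le\varkappa(\epsilon\,|\,n,\rho,v)$ and $f_2$ has bounded second fundamental form by \eqref{thm-stability-second-fundamental-form-diameter}. Swapping the roles of $f_1$ and $f_2$ and repeating produces a candidate inverse which, composed with $\Phi$, differs from the identity only by a small vertical motion that one shows vanishes using the uniqueness of the center of mass on a small ball.

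For the Lipschitz estimates I would differentiate both maps. For $\xi\in T_pX_1$, pick $x\in f_1^{-1}(p)$ and let $\tilde\xi\in\mathcal H_{f_1}(x)$ be its $f_1$-horizontal lift; up to an error $\varkappa(\epsilon\,|\,n,\rho,v)$, controlled by the variation of $f_2$ across the fiber and the $C^{1,\alpha}$-smoothness of $X_2$, $d\Psi(\xi)$ equals $df_2(\tilde\xi)$, and then \eqref{thm-stability-almost-Riem-submerison} combined with \eqref{C0-close-metrics} show that $\Psi$ is $(1+\varkappa(\epsilon))e^\eta$-bi-Lipschitz, which is stronger than the claimed $4e^\eta$ bound once $\epsilon$ is small. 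For $\Phi$, the differential splits along the decomposition $T_xM=\mathcal V_{f_1}(x)\oplus \mathcal H_{f_1}(x)$: on horizontal vectors it agrees with the horizontal lift of $d\Psi$ modulo $\varkappa(\epsilon)$, while on vertical vectors the closeness of $\mathcal V_{f_1}$ and $\mathcal V_{f_2}$ together with the bounds \eqref{thm-stability-second-fundamental-form-diameter} on the second fundamental form and integrability tensor imply that parallel transport along the short horizontal $f_2$-lift distorts the norm by at most $e^{\varkappa(\epsilon\,|\,n,\rho,v)}$. Hence $\Phi$ is $e^{\varkappa(\epsilon\,|\,n,\rho,v)+\eta}$-bi-Lipschitz, in particular a local diffeomorphism, and globally a diffeomorphism because its inverse was constructed above. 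The chief obstacle throughout is precisely the closeness of the two vertical distributions, which cannot follow from $C^0$-closeness of the metrics alone and must exploit the regularities \eqref{thm-stability-almost-Riem-submerison}--\eqref{thm-stability-second-fundamental-form-diameter} together with the $C^{1,\alpha}$-structure of the base; this is the content of the next section, and once it is in place the construction above closes the argument.
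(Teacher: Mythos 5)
Your outline coincides with the paper's proof in its main moves, but there is a real gap at the very first step: you run the center-of-mass construction of $\Psi$ and the geodesic-lifting construction of $\Phi$ directly on $(X_2,h_2)$, justifying well-posedness only by the $C^{1,\alpha}$-harmonic radius of the limit metric. A $C^{1,\alpha}$ metric carries no two-sided curvature control, hence no convexity of $d_{h_2}^2$ on a definite scale, no uniqueness (or smooth dependence) of minimizing geodesics, and no Grove--Karcher barycenter calculus; in particular the smoothness of $\Psi$ and the identity ``$d\Psi(\xi)=df_2(\tilde\xi)$ up to a mean-curvature error'' cannot be differentiated out of your definition. The paper's proof devotes its Step 1 precisely to this point: it replaces $h_i$ by the smoothed metrics $h_i(\epsilon)$ of Theorem \ref{thm-smoothing}, rescales by $\epsilon^{-1}$ so that $|\sec(X_i,\bar h_i)|\le C(n,\rho,v)\epsilon^{1/2}$ and $\injrad(X_i,\bar h_i)\ge i_0(n,\rho,v)$, and only then carries out the barycenter and lifting constructions. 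The rescaling is not cosmetic: the hypotheses \eqref{thm-stability-second-fundamental-form-diameter} are scale-invariant while \eqref{thm-stability-diameter-of-fibres} improves to order $\epsilon^{1/2}$, and the non-degeneracy estimates for $d\Psi$ and $d\Phi$ are performed at that scale. Without this reduction (or an equivalent substitute) your construction of $\Psi$, and of the unique minimal $h_2$-geodesic you lift, is not yet defined.

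Apart from this, your route is the paper's: $\Psi(p)$ is the barycenter of $f_2(f_1^{-1}(p))$, $\Phi$ is obtained by horizontal lifting of short base geodesics (the paper lifts with respect to $f_1$ to build $\Phi_1$ and sets $\Phi=\Phi_1^{-1}$, you lift with respect to $f_2$ --- an immaterial difference), and the non-degeneracy of the differentials rests on the closeness of the vertical distributions \eqref{prop-vertical-close}, deferred to Section \ref{Closeness-of-vertical-distributions} exactly as in the paper, combined with the scale-invariant bounds in \eqref{thm-stability-second-fundamental-form-diameter}. Two smaller corrections: the swapped-roles map is not an exact inverse of $\Phi$, so global bijectivity should be deduced from $\Phi$ being a local diffeomorphism of a closed manifold that is $C^0$-close (hence homotopic) to the identity, i.e.\ a degree-one covering, rather than from the claim that the composition ``is the identity''; and the existence of the horizontal lift of a short curve holds for any submersion of a compact manifold, so no second-fundamental-form bound is needed at that point --- \eqref{thm-stability-second-fundamental-form-diameter} is what you need later, for the distortion estimates on vertical vectors.
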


In the case of Theorem \ref{thm-unique-fibration}, \eqref{thm-fibration-1}-\eqref{thm-fibration-3} imply \eqref{thm-stability-almost-Riem-submerison}-\eqref{thm-stability-second-fundamental-form-diameter}.
Compared with \eqref{thm-fibration-3}, the weaker condition \eqref{thm-stability-second-fundamental-form-diameter} is rescaling invariant and independent of the base space's metric tensor $h$.
Hence,
assuming the Theorem \ref{thm-stability-nearby-metric},
Theorem \ref{thm-unique-fibration} holds as a special case.

\begin{proof}[Proof of Theorem \ref{thm-unique-fibration}]
	~
	
	Let $(M,X,f_i),i=1,2$ be any two fibrations as in Theorem \ref{thm-unique-fibration}. Taking $\eta=0, (X_1,h_1)=(X_2,h_2)=(X,h)$.
	For sufficiently small $\epsilon, $ the maximal one of $\varkappa(\epsilon\,|\,n)$, $\epsilon$ and  $C(n,\rho,v)\epsilon$ in regularities \eqref{thm-fibration-1} and \eqref{thm-fibration-2} respectively
	is smaller than the $\epsilon_0$ in Theorem \ref{thm-stability-nearby-metric}.
	Therefore $(M,X,f_1)$ is isomorphic to $(M,X,f_2)$ by Theorem \ref{thm-stability-nearby-metric}.
\end{proof}

Theorem \ref{thm-stability-nearby-metric} is a natural extension of the stability results in \cite{CFG1992} and \cite{PRT1999}. Recall that the stability theorem for fibrations under bounded sectional curvature  is proved in \cite[Appendix 2]{CFG1992}, where the second fundamental form of fibrations are uniformly bounded and with respect to one metric. It play an important role in the construction of nilpotent Killing structure. The stability of fibrations arising from a continuous family of metrics with bounded sectional curvature is established and applied in \cite{PRT1999}. Compared with the earlier known stability results under bounded sectional curvature,  other differences here are that the second fundamental form of maps in Theorem \ref{thm-stability-nearby-metric} may blow up,  and the base spaces is weaken from smooth Riemannian manifolds to  $C^{1,\alpha}$-Ricci limit spaces.

In the study of Theorem \ref{thm-unique-smoothed}, one also encounters the case that two metrics are not $C^0$-close enough to each other. Then we need another version of the stability for almost Riemannian submersions associated to two Lipschitz equivalent metrics (cf. \cite{JiangXu2019}).

\begin{theorem}\label{thm-stability-lipequiv-metric}
	Given any $L_0\ge 1, \rho, v>0$ and positive integers $n,m(\leq n)$, there are constants $\delta(n)>0$ and $\epsilon_0=\epsilon_0(L_0, n,\rho,v)>0$ such that for any $0<\epsilon\leq \epsilon_0,$  the following holds.
\par
Let $(M,g_i)$ and $(X_i,h_i)$ be as in Theorem \ref{thm-fibration} and let $f_i:(M,g_i)\to (X_i,h_i)$ be two fibrations with the regularities \eqref{thm-stability-almost-Riem-submerison}-\eqref{thm-stability-second-fundamental-form-diameter}. If the metrics $g_i$ and their Levi-Civita connections satisfy
\begin{equation}\label{C1-control-metrics}
L_0^{-1} g_2\le g_1\le L_0 g_2, \qquad |\nabla^{g_1}-\nabla^{g_2}|_{g_1}\leq L_0,
\end{equation}
then $(M,X_1,f_1)$ is isomorphic to $(M,X_2,f_2)$ by a pair $(\Phi,\Psi)$ of diffeomorphisms such that $\Psi$ is $4L_0$-bi-Lipschitz and $\Phi$ is $e^{\varkappa(\epsilon\,|\,L_0,n,\rho,v)}L_0$-bi-Lipschitz.
\end{theorem}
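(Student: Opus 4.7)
The plan is to parallel the proof of Theorem \ref{thm-stability-nearby-metric}, replacing the $C^0$-close hypothesis \eqref{C0-close-metrics} with the Lipschitz equivalence plus connection bound \eqref{C1-control-metrics}, and carefully tracking how $L_0$ propagates through every geometric comparison. Since the intrinsic regularities \eqref{thm-stability-almost-Riem-submerison}--\eqref{thm-stability-second-fundamental-form-diameter} on each $f_i$ are unchanged, what has to be reworked is (i) the Grassmannian-closeness of the two vertical distributions at each point of $M$ with respect to a metric changed by a potentially large factor $L_0$, and (ii) the propagation of $L_0$ through the center-of-mass constructions that glue $f_1$ and $f_2$ into the isomorphism pair $(\Phi,\Psi)$.

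The first step, to be carried out in Section \ref{Closeness-of-vertical-distributions}, is to show that $\mathcal{V}_{f_1}(x)$ and $\mathcal{V}_{f_2}(x)$ are $\varkappa(\epsilon\,|\,L_0,n,\rho,v)$-close on the Grassmannian at every $x\in M$. In the $C^0$-close case of Theorem \ref{thm-stability-nearby-metric} this is proved by identifying nearby $f_i$-fibers through $x$ via short geodesics and parallel transport, exploiting that $\eta$-close metrics have $\varkappa(\eta)$-close parallel transports. Here, a standard Duhamel-type formula for the variation of parallel transport along a $g_1$-geodesic $\gamma$ of length $\ell$ gives an operator-norm bound $\|P^{g_2}_\gamma - P^{g_1}_\gamma\|\le L_0\,\ell$ by \eqref{C1-control-metrics}. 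Combining this with the almost-Riemannian-submersion property \eqref{thm-stability-almost-Riem-submerison}, the fiber-diameter bound \eqref{thm-stability-diameter-of-fibres}, and the second-fundamental-form estimate \eqref{thm-stability-second-fundamental-form-diameter} (which keeps each fiber almost totally geodesic, hence its tangent space almost parallel along itself), one can connect $x$ to a nearby $f_2$-fiber by a path of length $O(\epsilon)$ and identify $\mathcal{V}_{f_1}(x)$ with $\mathcal{V}_{f_2}(x)$ up to an error $\varkappa(\epsilon\,|\,L_0)$. The remainder of the Grassmannian-closeness argument in Section \ref{Closeness-of-vertical-distributions} then applies essentially verbatim.

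With the closeness of vertical distributions established, the base map $\Psi:X_1\to X_2$ is defined by the $h_2$-center of mass of $f_2(f_1^{-1}(p))$; this is well-defined because the $C^{1,\alpha}$-Riemannian structure on $X_2$ from \cite[Theorem 0.1]{JKX2022Convergence} has a positive lower bound on the harmonic radius depending on $v$, while the $h_2$-diameter of $f_2(f_1^{-1}(p))$ is $O(L_0^{1/2}\epsilon)$ by \eqref{C1-control-metrics} and \eqref{thm-stability-almost-Riem-submerison}--\eqref{thm-stability-diameter-of-fibres}. The ambient map $\Phi:M\to M$ is then constructed fiberwise by projecting $x$ onto $f_2^{-1}(\Psi(f_1(x)))$ along the $g_2$-horizontal exponential, exactly as in the proof of Theorem \ref{thm-stability-nearby-metric}; Grassmannian-closeness of the vertical distributions guarantees invertibility of $d\Phi$, and bookkeeping of the metric change produces the announced $4L_0$ bi-Lipschitz constant for $\Psi$ and the $e^{\varkappa(\epsilon\,|\,L_0,n,\rho,v)}L_0$ constant for $\Phi$, where the $L_0$ arises from changing between $g_1$- and $g_2$-norms and the $e^{\varkappa(\epsilon\,|\,L_0,n,\rho,v)}$-factor from the almost-submersion property.

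The main obstacle will be the closeness of vertical distributions. The short-curve-and-parallel-transport argument used in the $C^0$-close case of Theorem \ref{thm-stability-nearby-metric} implicitly exploits near-agreement of $g_1$- and $g_2$-orthogonality, which can fail by a multiplicative factor $L_0$ in the present setting; the $C^1$-control on the connection difference in \eqref{C1-control-metrics} is exactly the ingredient needed to bypass this, via the ODE comparison that bounds parallel-transport deviation by $L_0$ times path length. Once this point is resolved, the remainder of the proof is a careful bookkeeping exercise that tracks $L_0$ through the center-of-mass constructions already used in the proof of Theorem \ref{thm-stability-nearby-metric}.
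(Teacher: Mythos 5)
Your high-level skeleton matches the paper's (build $\Psi$ by a center of mass on the base, build $\Phi$ by horizontal lifting, and reduce everything to closeness of the vertical distributions, noting that the connection bound in \eqref{C1-control-metrics} improves under rescaling), but the argument you give for the key step \eqref{prop-vertical-close} is not a proof, and it also mischaracterizes the $C^0$ case: Theorem \ref{thm-stability-nearby-metric} is \emph{not} proved by comparing parallel transports ($e^{\eta}$-Lipschitz-equivalent metrics give no control whatsoever on connections), but by the blow-up argument of Proposition \ref{key-proposition}. The real difficulty, which your parallel-transport estimate $\|P^{g_2}_\gamma-P^{g_1}_\gamma\|\le L_0\ell$ does not touch, is to compare $\mathcal V_{f_1}(x)$ and $\mathcal V_{f_2}(x)$ \emph{at the same point} $x$, through which both fibers already pass (there is no ``path of length $O(\epsilon)$ to a nearby $f_2$-fiber'' to be taken). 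Set-closeness of the two fibers must be converted into tangent-space closeness, and the only available regularity is the scale-invariant product bound \eqref{thm-stability-second-fundamental-form-diameter}; the full Hessians $|\nabla^2 f_i|$ are not assumed bounded, so a direct ODE/Duhamel estimate along fiber curves has nothing to close the Gronwall loop with. The paper handles this by blowing up at the point-dependent scale $\hat\epsilon(x)$ of \eqref{blowup-number-def}, passing to the local universal cover $\widetilde U$ (the manifold may be collapsed at scale $\hat\epsilon$, so this step is unavoidable) where the Reifenberg local covering geometry gives a uniform $C^{1,\alpha}$ harmonic radius, showing the lifted fibrations converge to linear projections of $\mathbb R^n$, deriving $\psi_\infty\circ\tilde f_{1,\infty}=\tilde f_{2,\infty}\circ I_\infty$ from Lemmas \ref{lem-psi} and \ref{lem-psi-blowup}, and—this is exactly where \eqref{C1-control-metrics}, reduced to \eqref{nearby-connections} by rescaling, enters—proving in Lemma \ref{lem-limit-isomorphism} that the identity map $C^1$-converges to a linear isomorphism $I_\infty$, so that the two limit vertical subspaces coincide. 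None of these ingredients is supplied or replaced by your sketch, so the core lemma remains unproved.

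A secondary gap: you skip the paper's Step 1, in which the base metrics are first smoothed via Theorem \ref{thm-smoothing} and rescaled so that $|\sec(X_i,\bar h_i)|\le 1$ and $\injrad(X_i,\bar h_i)\ge i_0(n,\rho,v)$. Defining $\Psi$ as a center of mass directly on the $C^{1,\alpha}$ metric $h_2$ only using a harmonic-radius lower bound is not enough for what the proof actually needs: the non-degeneracy of $d\Psi$ and $d\Phi$ and the $4L_0$ and $e^{\varkappa(\epsilon\,|\,L_0,n,\rho,v)}L_0$ bi-Lipschitz estimates are obtained by differentiating the center-of-mass identity and the horizontal-lift variation (as in \cite{JiangXu2019}), which use curvature and injectivity-radius bounds on the base; with only $C^{1,\alpha}$ control those estimates are not justified. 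If you insert the base-smoothing reduction and replace your parallel-transport heuristic by the blow-up/compactness argument of Proposition \ref{key-proposition} (including Lemma \ref{lem-limit-isomorphism}), the rest of your bookkeeping for $L_0$ is consistent with the paper.
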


But up to a suitable rescaling, the condition  \eqref{C1-control-metrics} can be replaced by
\begin{equation}\label{nearby-connections}
L_0^{-1} g_2\le g_1\le L_0 g_2, \qquad |\nabla^{g_1}-\nabla^{g_2}|_{g_1}\leq \epsilon.
\end{equation}
\par	

The rest of this section is to prove Theorems \ref{thm-stability-nearby-metric} and \ref{thm-stability-lipequiv-metric}.
Though the condition \eqref{C1-control-metrics} can be reduced to \eqref{nearby-connections}, we will do this only when they are necessarily required.

We first prove Theorem \ref{thm-stability-lipequiv-metric} under the assumption that $L_0^{-1} g_2\le g_1\le L_0 g_2$, most of whose arguments are also suitable for Theorem \ref{thm-stability-nearby-metric}.

Let $(M,g)$ be a closed Riemannian $n$-manifold with $|\Ric_{M}|\leq n-1$ and $(\delta,\rho)$-Reifenberg local covering geometry for some $\delta=\delta(n)>0$. Via Dai-Wei-Ye \cite{DWY1996}, there is uniform flow time for Ricci flow equation with initial value $g$ (see Theorem \ref{thm-smoothing-ricci-flow} below). The following smoothing result on the limit space will be applied.
\begin{theorem}[{\cite[Corollary 0.2]{JKX2022Convergence}}]\label{thm-smoothing}
Let $(M_i,g_i)$ be a sequence of closed Riemannian $n$-manifolds with $|\Ric_{M_i}|\leq n-1$ and $(\delta,\rho)$-Reifenberg local covering geometry for some $\delta=\delta(n)$ such that $(M_i,g_i)\stackrel{GH}{\longrightarrow} (X,h)$, where $(X,h)$ is an $m$-regular Ricci limit space where $B_1(p)$ has volume $\ge v>0$ for any $p\in X$.
		Let $g_i(t), 0<t\leq  T(n,\rho)$ be the solutions of Ricci flow equation with initial value $g_i(0)=g_i.$
		Then passing to a subsequence $(M_i, g_i(t))\stackrel{GH}{\longrightarrow}(X,h(t)),$ where $\{h(t), 0<t\leq  T(n,\rho)\}$ is a smooth family of  Riemannian metrics on $X$ satisfying
	\begin{enumerate}[(a)]
		\item\label{full-rank1a} $h(t)$ converge to $h$ in the $C^{1,\alpha}$-topology as $t\to 0$ for any $1>\alpha>0.$
		\item\label{full-rank1b} the sectional curvature of $h(t)$ satisfies $|\sec(X, h(t))|\le C(n,\rho,v)t^{-\frac{1}{2}}$,
	    \item \label{full-rank1c} $e^{-2t}d_{h}(p,q)\leq d_{h(t)}(p,q)\leq e^{2t}d_{h}(p,q)$ for any $p,q\in X$,
\end{enumerate}
	where $d_{h}$ (resp. $d_{h(t)}$) is the distance function induced by $h$ (resp. $h(t)$) and the constants $T(n,\dots), C(n,\dots)$ depend only on the given parameters.
\end{theorem}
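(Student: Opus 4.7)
The plan is to construct $h(t)$ as a subsequential Gromov-Hausdorff limit of $(M_i, g_i(t))$ and to read off properties (a)--(c) from the corresponding properties on $M_i$. First, by the $(\delta,\rho)$-Reifenberg local covering hypothesis, the Riemannian universal cover of every unit ball in $(M_i, g_i)$ is non-collapsed, giving a definite lower bound on the harmonic radius there via Cheeger-Colding. Hence a Dai-Wei-Ye-type smoothing produces a Ricci flow $g_i(t)$ on a uniform interval $t \in (0, T(n,\rho)]$ with $|\mathrm{Rm}(g_i(t))| \le C(n,\rho)\, t^{-1/2}$ together with the standard Ricci-flow bi-Lipschitz estimate $e^{-Ct} g_i \le g_i(t) \le e^{Ct} g_i$, which after adjusting $T$ gives the form stated in (c).

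The bi-Lipschitz bound implies $d_{GH}((M_i, g_i(t)), (M_i, g_i)) \to 0$ as $t \to 0$ uniformly in $i$. Hence for each fixed $t \in (0,T]$, by Gromov precompactness and a diagonal argument, passing to a subsequence yields $(M_i, g_i(t)) \stackrel{GH}{\to} (X, h(t))$, where $h(t)$ is \emph{a priori} only a length metric on $X$ and (c) passes to the limit. For this fixed $t$, $|\sec(g_i(t))|$ is uniformly bounded in $i$; combined with the volume lower bound on unit balls of $(X, h(t))$ inherited from $(X, h)$ through (c), the Fukaya-Cheeger-Gromov collapsing theory (or Theorem \ref{thm-fibration} applied to $(M_i, g_i(t))$, after verifying Reifenberg covering for $g_i(t)$ with only a constant loss from the bi-Lipschitz bound) identifies $(X, h(t))$ as a smooth Riemannian $m$-manifold and produces almost Riemannian submersions $(M_i, g_i(t)) \to (X, h(t))$. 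O'Neill's formula then yields (b), $|\sec(X, h(t))| \le C(n,\rho,v)\, t^{-1/2}$, since the $A$- and $\sndf$-tensors of these submersions are controlled by the regularity guaranteed by Theorem \ref{thm-fibration}.

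The most delicate point is (a), the $C^{1,\alpha}$-convergence $h(t) \to h$ as $t \to 0$. The idea is to compare $h$ and $h(t)$ in $C^{1,\alpha}$-harmonic coordinates on $X$, whose radius is uniformly bounded below by \cite[Theorem 0.1]{JKX2022Convergence}, by lifting such coordinates to approximately harmonic coordinates on $(M_i, g_i)$ via the fibration and Cheeger-Colding $\delta$-splitting maps. Then Shi-type interior estimates applied on the non-collapsed universal cover, where Reifenberg covering gives uniform local geometry, control the metric components of $g_i(t)$ in $C^{1,\alpha}$ uniformly in $i$; pushing these estimates down through the fibration yields $C^{1,\alpha}$-closeness of $h(t)$ to $h$, and letting $t \to 0$ gives (a). The main obstacle is precisely this uniformity in $i$ together with the commutation of the two limits $i \to \infty$ and $t \to 0$: all harmonic-coordinate and Ricci-flow estimates must be carried out on the non-collapsed Reifenberg cover rather than on $M_i$ itself, so that the Shi bounds and the harmonic radius have constants independent of the collapsing scale.
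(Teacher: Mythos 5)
You should first note that the paper does not prove this statement at all: it is quoted verbatim as \cite[Corollary 0.2]{JKX2022Convergence}, so there is no internal proof to compare against. Judged on its own merits, your overall strategy (bi-Lipschitz estimate from the flow giving (c) and a subsequential limit bi-Lipschitz homeomorphic to $X$; bounded-curvature collapsing theory identifying the limit and its curvature; hardest work in (a)) is consistent with how the authors treat the closely related Lemma \ref{lem-smoothed-limit} in Section 3, but two of your steps have genuine problems.

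For (b), you cannot get the curvature bound by ``O'Neill's formula'' applied to the fibrations of Theorem \ref{thm-fibration}: those maps are only $\varkappa(\epsilon\,|\,n)$-almost Riemannian submersions between two independently given metrics, and O'Neill's formula computes the curvature of the base of an exact Riemannian submersion; the error terms for an almost submersion involve two derivatives of the discrepancy and are not controlled by \eqref{thm-fibration-1}--\eqref{thm-fibration-3}. Moreover, applying Theorem \ref{thm-fibration} to $(M_i,g_i(t))\to (X,h(t))$ presupposes that $(X,h(t))$ is already known to be a regular Reifenberg local covering limit, which is part of what must be proved. The route the authors actually use in the analogous Lemma \ref{lem-smoothed-limit} is Fukaya's structure/regularity theory for limits under bounded sectional curvature (\cite[Theorems 0.6, 0.9]{Fukaya1988}): rescale to $K(t)g_i(t)$ with $|\sec|\le 1$, check that the unit-ball volume lower bound survives the rescaling, conclude $|\sec(X,K(t)h(t))|\le C(n,v)$ and hence $|\sec(h(t))|\le C(n,\rho,v)t^{-1/2}$; note also that smoothness of $h(t)$ needs the Shi-type bounds $|\nabla^k\operatorname{Rm}(g_i(t))|\le C(k,t)$ (with only $|\sec|\le K(t)$ the limit is merely $C^{1,\alpha}$), whereas you invoke Shi estimates only later for (a). Finally, your treatment of (a) is a sketch rather than a proof: the limit metric $h$ is produced by local equivariant limits of the non-collapsed covers modulo the pseudo-group actions, not by ``pushing down through the fibration,'' and the uniform-in-$i$ $C^{1,\alpha}$-closeness of $\tilde g_i(t)$ to $\tilde g_i$ on the covers, together with the interchange of the limits $i\to\infty$ and $t\to 0$, is exactly the content that would have to be carried out in detail (as in \cite{JKX2022Convergence}); you correctly identify this obstacle but do not resolve it.
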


\begin{proof}[Proof of Theorem \ref{thm-stability-lipequiv-metric}]	
	\item \indent\indent The proof of Theorem \ref{thm-stability-lipequiv-metric} can be divided into two steps.
    \item \indent\indent Step 1. To reduce the Theorem \ref{thm-stability-lipequiv-metric} to the case that the base space $(X_i,h_i)$ admits $|\sec(X_i,h_i)|\leq 1$ and $\injrad(X_i,h_i)\geq i_0$ for some $i_0>0$  by Theorem \ref{thm-smoothing}.
\par
Let $f_i:(M,g_i)\to (X_i,h_i)$($i=1,2$) be two fibrations as in Theorem \ref{thm-stability-lipequiv-metric}.
	By Theorem \ref{thm-smoothing}, there are nearby smooth metrics $h_i(t), t\in (0, T(n,\rho)]$ satisfying $|\sec(X_i,h_i(t))|\leq C(n,\rho,v)t^{-1/2}$ for some constant $C(n,\rho,v)$.	
	 For any $0<\epsilon\le T(n,\rho)$, let $\bar h_i=\epsilon^{-1}h_i(\epsilon)$ and $\bar g_i=\epsilon^{-1}g_i$. Then
	\begin{equation}\label{condition-bounded-Ricci-sec}
	\begin{aligned}
	|\Ric(M,\bar g_i)|\le (n-1)\epsilon, \ |\sec(X_i,\bar h_i)|\le C(n,\rho,v)\epsilon^{1/2}.
	\end{aligned}
	\end{equation}
 Since $h(\epsilon)$ is $C^{1,\alpha}$-close to $h$, we have 
 $\vol(B_1(p_i, \bar h_i))\geq v_0(n,v)>0, \forall p_i\in X_i.$  By the injectivity radius estimate \cite[Theorem 4.3]{CGT1982}) under bounded sectional curvature, one has $\injrad(X_i, \bar{h}_i)\ge i_0(n,\rho,v)>0$.
\par
Simultaneously, $f_i:(M,\bar g_i)\to (X_i,\bar h_i)$  has regularities  as follows: $f_i$ is
$\varkappa(\epsilon\,|\,n)$-almost Riemannian submersion and the intrinsic diameter of $f_i$-fiber measured by the metric $\bar{g}_i$ satisfies $\diam_{\bar{g}_i}f_{i}^{-1}(p_i)\leq \epsilon^{1/2}$ for any $p_i\in X_i$.
By the scaling invariance, $f_i:(M,\bar g_i)\to (X_i,\bar h_i)$ satisfies (\ref{thm-stability-second-fundamental-form-diameter}) and  $\bar{g}_i$ $(i=1,2)$ are $L_0$-equivalent to each other.
Without loss of generality, we still assume that $f_i$ has regularities \eqref{thm-stability-almost-Riem-submerison}-\eqref{thm-stability-second-fundamental-form-diameter}. Therefore, Step 1 holds.

\item \indent\indent Step 2. Assuming $(X_i,h_i)$ satisfies $|\sec(X_i,h_i)|\leq 1$ and $\injrad(X_i,h_i)\geq i_0>0$, to construct diffeomorphisms $\Psi:X_1\to X_2$ and $\Phi:M\to M$ such that $\Psi\circ f_1=f_2\circ \Phi$.
\par
 The constructions of $\Phi$ and $\Psi$ are similar to that in \cite{JiangXu2019}. In the following,  we sketch out the constructions by pointing out the key points and refer to \cite{JiangXu2019} for the detailed arguments.
 \par
 $\bullet$ To construct a $4L_0$-bi-Lipschitz diffeomorphism $\Psi: X_1\to X_2$ such that $f_1$ and $\Psi^{-1}\circ f_2$ are fiberwisely close.
 \par
 For any $p\in X_1$, let $F_{1,p}=f_{1}^{-1}(p)$. Via the regularities \eqref{thm-stability-almost-Riem-submerison}, \eqref{thm-stability-diameter-of-fibres} and (\ref{C1-control-metrics}), one has
$\diam_{h_2}(f_{2}(F_{1,p}))\leq L_{0}^{1/2}e^{\epsilon}\epsilon.$
 Since $(X_{2},h_2)$ satisfies $|\sec(X_2,h_2)|\leq 1$ and $\injrad(X_2,h_2)\geq i_0>0$, we define $\Psi(p)$ to be the center of mass of $f_2(F_{1,p})\subset (X_2,h_2)$.
 For any $v\in \mathcal{H}_{f_1}, $ by calculating $d\Psi(df_1(v))$ is represented by
 the integral of $df_2(v)$ subtract an item determined by
 $H _{F_{1,p}} \cdot {\rm diam_{g_1}}(F_{1,p})$ (see \cite[(5.1.6)]{JiangXu2019}), where $H_{F_{1,p}}$ is the mean curvature of fiber $F_{1,p}$.
 Observe that if  for any unit vector $v\in \mathcal{H}_{f_1}, $ $df_2(v)$ is definitely away from zero and $\left|\sndf\right|_{F_{1,p},g_1}\cdot {\rm diam_{g_1}}(F_{1,p})$ is sufficient small, then $d\Psi$ is non-degenerate, thus, $\Psi$ is a diffeomorphism,  and by a calculation,  it is $4L_0$-bi-Lipschitz.
 \par
 $\bullet$ To construct a diffeomorphism $\Phi_1:M\to M$ such that $ f_1\circ \Phi_1=\Psi^{-1}\circ f_2$, and $\Phi_1$  is $e^{\varkappa(\epsilon\,|\,L_0,n,\rho,v)}L_0$-bi-Lipschitz. Taking $\Phi=\Phi_1^{-1}$, we have $\Psi\circ f_1=f_2\circ \Phi$.
\par
For any $x\in M,$ denote $p=f_1(x), \Psi(q)=f_2(x).$ Let $\gamma_x:[0,1]\to X_1$ be the unique minimal geodesic with $\gamma_x(0)=p, \gamma_x(1)=q$ and $\tilde \gamma_x:[0,1]\to M$ its unique $f_1$-horizontal lifting at $x.$  Define   $\Phi_1(x)=\tilde \gamma_x(1)$.  Then $\Phi_1$ depends smoothly on $x$ and lies in $F_{1,q}$. Thus $\Phi_1$ is a bundle map from $(M,X_1,\hat f_2=\Psi^{-1}\circ f_2)$ to $(M,X_1,f_1)$, i.e., $ f_1\circ \Phi_1=\Psi^{-1}\circ f_2$.

Since the kernel of $d\Phi_1$ is contained in $\mathcal{V}_{\hat f_2}(x),$  in order to show the non-degeneration of $d\Phi_1,$ it is sufficient to check  for any $g_1$-unit vector $v\in \mathcal{V}_{\hat f_2}(x),$ $d\Phi_1(v)\neq 0.$
Let $\phi: f_1^{-1}(B_{i_0}(p, h_1))\to B_{i_0}(p, h_1)\times F_{1,q}, \  \phi(x)=(f_1(x), \phi_2(x)=\tilde \gamma_x(1))$ be the local trivialization via $f_1$-horizontal lifting geodesics.  Then $d\Phi_1(v)=d\phi_2(v).$
 Let $v^{\top}$ be the orthogonal projection of $v$ on  $\mathcal{V}_{f_1}(x)$ with respect to $g_1.$    An explicit estimation by variation of horizontal curves shows $\left|d\phi_2(v)\right|_{g_1}$ is no less than that $e^{-e^\epsilon \left|\sndf\right|_{F_{1,\gamma},g_1}\cdot {\rm diam_{g_1}}(F_{1,\gamma})}\left|v^{\top}\right|_{g_1}$ subtract  an item determined by   	$\left|\sndf \right|_{F_{1,\gamma}, g_1}\cdot {\rm diam_{g_1}}(F_{1,\gamma})$ and $ \left|A \right|_{F_{1,\gamma}, g_1}\cdot {\rm diam_{g_1}}(F_{1,\gamma})$ (see  \cite[(6.2.2)]{JiangXu2019}). Observe that if $\mathcal{V}_{f_1}$ and $\mathcal{V}_{f_2}$ are close to each other, and $\left|\sndf\right|_{F_{1,\gamma},g_1}\cdot {\rm diam_{g_1}}(F_{1,\gamma})$ and $ \left|A\right|_{F_{1,\gamma},g_1}\cdot {\rm diam_{g_1}}(F_{1,\gamma})$ are sufficiently small, then $d\Phi_1$ is non-degenerate, and the restriction of $\Phi_1$ on each $f_2$-fiber is $e^{\varkappa(\epsilon\,|\,L_0,n,\rho,v)}L_0$-bi-Lipschitz. Moreover, for any $g_{1}$-unit vector $w$ in the horizontal subspace of $f_{1}$, one has $d\Psi\circ df_{1}(w)=df_{2}\circ d\Phi_{1}(w)$. Since $f_{i}:(M,g_i)\to (X_i,h_i)$ are $e^{\varkappa(\epsilon|n)}$-almost Riemannian submersions, $|d\Phi_1(w)|_{g_2}\geq e^{-\varkappa(\epsilon|n)} L_0^{-1}$. By considering the inverse map of $\Phi_1$, which is constructed similar as  $\Phi_1$, we have $|d\Phi_1(w)|_{g_2}\leq e^{\varkappa(\epsilon|n)}L_0$. Hence, $\Phi_1$ is an $e^{\varkappa(\epsilon|L_0, n,\rho,v)}L_0$-bi-Lipschitz diffeomorphism.

\par
A much easier case (e.g., Theorem \ref{thm-unique-fibration}) is that $\left|\nabla^{2}f_{i}\right|_{g_i}$ is bounded by a universal constant $C<+\infty$, $g_i$ are $C^1$-close to each other, and the intrinsic diameters of fibers are small, which by considering the lifting of $f_i$ to local universal covers $\widetilde{B_{\rho}(x)}$ will imply that $f_i$ ($i=1,2$) are $C^1$-close with each other. Then
by the observations above,  $d\Phi_1$ and $d\Psi$ are non-degenerate.  Hence $\Phi_1$ and $\Psi$ will be diffeomorphisms.

Under the weaker conditions  \eqref{thm-stability-second-fundamental-form-diameter} and \eqref{C1-control-metrics},  more efforts are required. The motivation for us to consider \eqref{thm-stability-second-fundamental-form-diameter} is that, it is independent of $h_i$ and by the observations above, it is more natural than \eqref{thm-fibration-3} for non-degeneration of $d\Phi_1$ and $d\Psi$.

Instead of closeness between $df_i$, we will prove only for the vertical distributions of $f_i$.
We will prove that
\begin{enumerate}\addtocounter{enumi}{+3}
	\item\label{prop-vertical-close} For any $x\in M$, the dihedral angle measured in $g_2$ between the subspaces $\mathcal V_{f_1}(x)$ and $\mathcal V_{f_2}(x)$ is no more than $\varkappa(\epsilon\,|\,L_0,n,\rho,v)$.
\end{enumerate}

By the observation on $d\Phi_1$ above, \eqref{prop-vertical-close} together with \eqref{thm-stability-second-fundamental-form-diameter} imply that $d\Phi_1$ is not degenerated.

In order to show that $d\Psi$ is non-degenerate, a definite deviation control between $\mathcal{H}_{f_i}$ is necessary. In fact, by the $L_0$-Lipschitz equivalence between $g_i$ and \eqref{prop-vertical-close}, it is easy to check
\begin{enumerate}\addtocounter{enumi}{+4}
\item\label{prop-diff-close1} There exists a constant $\theta_0=\theta_0(L_0,n,\rho,v)$ satisfying $0\leq \theta_0<\frac{\pi}{2}$ such that the dihedral angle measured in $g_2$ between the subspaces $\mathcal H_{f_1}(x)$ and $\mathcal H_{f_2}(x)$ is no more than $\theta_0$.
\end{enumerate}
Then for any $f_1$-horizontal vector $w\in T_xM$ with $|w|_{g_1}=1$, its orthogonal projection $w_2$ on $\mathcal H_{f_2}$ satisfies $|w_2|_{g_2}\ge |w_1|_{g_2} \cos\theta_0\ge L_0^{-1/2}\cos\theta_0$. Then by the definition of almost Riemannian submersions, $$e^\epsilon L_0^{1/2}\ge |df_2(w_2)|_{h_2}\ge e^{-\epsilon}|w_2|_{g_2}\ge e^{-\epsilon} L_0^{-1/2}\cos\theta_0.$$
By the observation in the construction of $\Psi$, $d\Psi$ is also non-degenerate.

We now prove that \eqref{prop-vertical-close} together with the $L_0$-Lipschitz equivalence between $g_i$ implies \eqref{prop-diff-close1}.

Indeed, for any $W\in \mathcal H_{f_1}(x)$ with a unit $g_2$-norm $|W|_{g_2}=1$, let $W=W_i+W_i'$ be the orthogonal decomposition of $W$ under $g_2$ such that $W_i\in \mathcal V_{f_i}(x)$, $W_2'\in \mathcal H_{f_2}(x)$. Because $\mathcal V_{f_i}(x)$ are close to each other, $|W_1-W_2|_{g_2}=|W_2'-W_1'|_{g_2}=\varkappa(\epsilon\,|\,L_0,n,\rho,v)$. In order to show \eqref{prop-diff-close1}, it suffices to show a positive lower bound on the $g_2$-norm of $W_2'$, which can be see from the calculation below:
$$\begin{aligned}
L_0^{-1}&\le g_1(W,W)=g_1(W,W_1')\le  g_1(W,W_2')+\varkappa(\epsilon\,|\,L_0,n,\rho,v)\\
&\le |W|_{g_1}\cdot|W_2'|_{g_1}+\varkappa(\epsilon\,|\,L_0,n,\rho,v)\le  L_0 |W_2'|_{g_2}+\varkappa(\epsilon\,|\,L_0,n,\rho,v).
\end{aligned}
$$

The verification of  \eqref{prop-vertical-close} is left to the next section; see \eqref{Closeness-of-vertical-distributions-2}.
\end{proof}

\begin{proof}[Proof of Theorem \ref{thm-stability-nearby-metric}]
	~
	
	The proof of Theorem \ref{thm-stability-lipequiv-metric} goes through for Theorem \ref{thm-stability-nearby-metric} if the closeness \eqref{prop-vertical-close} of vertical distributions $\mathcal V_{f_i}$ still holds under \eqref{C0-close-metrics}. It will be verified together with \eqref{prop-vertical-close} in the next section; see \eqref{Closeness-of-vertical-distributions-1}.
\end{proof}

\section{Closeness of vertical distributions}\label{Closeness-of-vertical-distributions}
In this section, we will prove  \eqref{prop-vertical-close}, which is restated as the following.
\begin{proposition}[Closeness of vertical distributions]\label{key-proposition}
	Given any $L_0\ge 1, \rho,v>0$ and positive integers $n\leq m$, there are $\delta(n)>0, \eta_0(n,\rho,v), \epsilon_1(n,\rho,v)>0$ and $\epsilon_2(L_0, n,\rho,v)>0$ such that the following holds.
	\par
	Let $(M,g_i)$ and $(X_i,h_i)$ ($i=1,2$) be as in Theorem \ref{thm-fibration}. Suppose that $f_i:(M,g_i)\to (X_i,h_i)$ are two fibrations with the regularities \eqref{thm-stability-almost-Riem-submerison}, \eqref{thm-stability-diameter-of-fibres} and	
	\begin{equation}\label{second-fundamental-form-diameter-Propo}
		\left|\sndf\right|_{F_{i,p_i},g_i}\cdot \diam_{g_i}(F_{i,p_i})\leq \epsilon,
	\end{equation}
	for any $p_i\in X_i$ and $f_i$-fiber $F_{i,p_i}=f_i^{-1}(p_i)$.
\par	
\begin{enumerate}\numberwithin{enumi}{theorem}
\item\label{Closeness-of-vertical-distributions-1} If $0<\epsilon\leq \epsilon_1$ and $g_i$ ($i=1,2$) satisfy \eqref{C0-close-metrics}, then \eqref{prop-vertical-close} holds for $\varkappa(\epsilon\,|\,n,\rho,v)$.

\item\label{Closeness-of-vertical-distributions-2} If  $0<\epsilon\leq \epsilon_2$  and  $g_i$ ($i=1,2$) satisfy \eqref{C1-control-metrics}, then \eqref{prop-vertical-close} holds.
\end{enumerate}
\end{proposition}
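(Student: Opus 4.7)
The plan is to localize at each $x\in M$, pass to the local universal cover where Reifenberg local covering geometry gives an almost Euclidean structure, and identify the vertical distributions of the lifts $\tilde f_1,\tilde f_2$ through Cheeger-Colding harmonic splitting maps canonically associated to $\bar g_1,\bar g_2$. The scale-invariant hypothesis \eqref{second-fundamental-form-diameter-Propo} is precisely what makes this feasible: setting $d=\max_i\operatorname{diam}_{g_i}(F_{i,f_i(x)})\le \epsilon$ and replacing $g_i$ by $\bar g_i:=d^{-2}g_i$ converts it into a genuine smallness bound $|\sndf|_{F_{i,\cdot},\bar g_i}\le \epsilon$ on the rescaled unit-diameter fibers, while the Ricci bound becomes $|\Ric_{\bar g_i}|\le (n-1)\epsilon^2$, Reifenberg persists on $\bar g_i$-balls of radius $\le \rho/d\ge \rho$, and both \eqref{C0-close-metrics} and \eqref{C1-control-metrics} are scale-invariant.

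Lifting to $(\widetilde{B_1(x,\bar g_i)},\tilde x)$, the almost-Euclidean Reifenberg geometry together with the nearly vanishing Ricci bound allows Cheeger-Colding \cite{CC1996,CC1997I} to produce a $\delta$-splitting harmonic map $u_i=(u_i^1,\dots,u_i^n)\colon B_1(\tilde x,\bar g_i)\to\mathbb R^n$ with integral Hessian control $\|\nabla^2 u_i\|_{L^2(B_1)}\le \varkappa(\epsilon\mid n,\rho,v)$. By the construction of $f_i$ as a glued splitting map in \cite[Theorem 0.9]{JKX2022Convergence}, the lift $\tilde f_i$ agrees with $(u_i^1,\dots,u_i^m)$ up to a base isometry of $\mathbb R^m$ and an angular error of size $\varkappa(\epsilon\mid n,\rho,v)$ coming from the small $\sndf$-bound on the fibers; consequently $\mathcal V_{\tilde f_i}(\tilde x)$ is $\varkappa(\epsilon\mid n,\rho,v)$-close in angle to $\bigcap_{j\le m}\ker du_i^j(\tilde x)$.

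It remains to compare the two splitting maps at $\tilde x$. Under \eqref{Closeness-of-vertical-distributions-1}, the operators $\Delta_{\bar g_1}$ and $\Delta_{\bar g_2}$ have $\varkappa(\eta)$-close coefficients, so after normalizing the $u_i^j$ by matching boundary values at nearly antipodal Reifenberg points, an elliptic energy comparison yields $\|du_1^j-du_2^j\|_{L^2(B_{1/2})}\le \varkappa(\epsilon\mid n,\rho,v)$; combined with the Hessian bound from the previous step, a Poincaré-type/maximal function argument upgrades this $L^2$-gradient closeness to pointwise angle closeness of the kernels at $\tilde x$. Under \eqref{Closeness-of-vertical-distributions-2}, the connection bound $|\nabla^{\bar g_1}-\nabla^{\bar g_2}|\le L_0$ allows a direct Schauder estimate on $u_1^j-u_2^j$, giving pointwise $C^{1,\alpha}$-closeness of gradients with error depending on $L_0$. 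Projecting down via $\pi$ then transfers the angle bound to $\mathcal V_{f_1}(x)$ versus $\mathcal V_{f_2}(x)$, establishing \eqref{prop-vertical-close}.

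The main obstacle, and the reason the proposition is non-trivial, is that $|\nabla^2 f_i|$ can grow like $\operatorname{diam}(F_i)^{-1}$, so the classical Cheeger-Fukaya-Gromov $C^1$-stability argument cannot be invoked directly; working at the fiber scale via the rescaling and then comparing the canonical harmonic proxies $u_i$ in $W^{1,2}$ rather than $f_i$ themselves in $C^1$ is what circumvents this. A subsidiary technical difficulty in case \eqref{Closeness-of-vertical-distributions-1} is the absence of any connection bound, which forces reliance on Cheeger-Colding integral Hessian estimates and careful boundary-data normalization of splitting maps rather than Schauder theory, so that the final error constant depends only on $n,\rho,v$ and not on an auxiliary Lipschitz constant.
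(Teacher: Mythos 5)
Your proposal has a genuine gap, in fact two related ones. First, the identification of $\mathcal V_{f_i}$ with the kernel of a canonical Cheeger--Colding splitting map ``by the construction of $f_i$ as a glued splitting map'' is not available: Proposition \ref{key-proposition} is stated (and is later applied, e.g.\ to fibrations produced by Cheeger--Fukaya--Gromov for smoothed metrics $g(t)$) for \emph{arbitrary} fibrations satisfying only \eqref{thm-stability-almost-Riem-submerison}, \eqref{thm-stability-diameter-of-fibres} and \eqref{second-fundamental-form-diameter-Propo}, not for the particular fibration built in \cite{JKX2022Convergence}. Even for that particular fibration, the gluing of splitting maps is performed at a definite scale comparable to the harmonic radius of $X$, whereas your comparison takes place at the fiber scale $d\sim\epsilon$ after rescaling; the claimed $\varkappa(\epsilon)$-angular agreement of $\tilde f_i$ with an $m$-tuple of harmonic splitting functions at that scale is exactly the kind of statement that needs proof (the paper obtains its substitute from the almost totally geodesic property of the rescaled fibers, \eqref{estimate-2nd-fund-form-blowup}, together with $C^{1,\alpha}$-compactness of the blown-up universal covers).

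Second, and more fundamentally, no step of your argument ever uses the relation between $f_1$ and $f_2$ as two fibrations of the \emph{same} manifold: comparing two splitting maps $u_1,u_2$ for the two metrics ``after matching boundary values'' cannot work, because on an almost Euclidean ball there exist splitting maps with small Hessian whose kernels are \emph{arbitrary} $(n-m)$-planes; which plane each $u_i$ selects is dictated by the fibration $f_i$ it is adapted to, so closeness of $u_1$ and $u_2$ presupposes precisely the fiberwise closeness of $f_1$ and $f_2$ that is to be proved (a circularity), or else their boundary normalizations are unrelated to the fibrations and the conclusion about $\mathcal V_{f_1}$ versus $\mathcal V_{f_2}$ does not follow. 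The indispensable input, which your sketch omits, is that $f_2$ maps each (small-diameter) $f_1$-fiber to a set of small diameter, and that this ``almost containment'' of $f_1$-fibers in $f_2$-fibers survives the blow-up at the scale \eqref{blowup-number-def} via the variation estimate \eqref{eq-diam-vary}; this is the content of Lemmas \ref{lem-psi} and \ref{lem-psi-blowup} and of the limit identity $\psi_\infty\circ\tilde f_{1,\infty}=\tilde f_{2,\infty}\circ I_\infty$, which (together with Lemma \ref{lem-limit-isomorphism} handling the two hypotheses \eqref{C0-close-metrics} and \eqref{C1-control-metrics}) forces the limit vertical spaces to coincide. Once that mechanism is in place, the harmonic-map detour is unnecessary; without it, your elliptic comparison of $u_1$ and $u_2$ cannot yield \eqref{prop-vertical-close}. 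Your rescaling-by-diameter normalization and the use of Reifenberg local covering geometry to pass to almost Euclidean local universal covers are, by contrast, sound and close in spirit to the paper's blow-up, so the fix is to replace the splitting-map comparison by a direct comparison of the two fibrations at the blown-up scale.
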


Before giving the proof of Proposition \ref{key-proposition}, we make some preparation.
\par
For any $x\in M$, by the $(\delta,\rho)$-Reifenberg local covering geometry of $g_i$ at $x$, a lift point of $x$ in the universal cover $\widetilde{B_{\rho}(x,g_i)}$ admits a uniform $C^{1,\alpha}$-harmonic radius $\ge r_0(n,\rho,\alpha,Q)>0$, where $0<\alpha<1$ and $Q\ge 1$ are fixed constants in the definition of harmonic radius.
Since $g_i$ are $L_0$-Lipschitz equivalent, $U=B_{\rho}(x,g_1)\cap B_{\rho}(x,g_2)$ contains $B_{L_0^{-1}\rho}(x,g_i)$.
Let $\pi:(\widetilde{U},\tilde x, \tilde{g}_i)\to (U,x,g_i)$ is the Riemannian universal cover of $U=B_{\rho}(x,g_1)\cap B_{\rho}(x,g_2)\subset (M,g_i)$. Then By Anderson's $C^{1,\alpha}$-convergence \cite{Anderson1990} the $C^{1,\alpha}$-harmonic radius of $\tilde x$ in $(\widetilde{U},\tilde{g}_{i})$
\begin{equation}\label{harmonic-radius-bound}
r_{h;\alpha,Q}(\tilde x, \tilde g_i)\geq \min\left\{L_0^{-1}\rho,r_0(n,\rho,\alpha,Q)\right\}>0.
\end{equation}

Since the dihedral angle between two linear spaces is rescaling invariant, after blowing up the metric by $\hat \epsilon\to 0$ (will be explicitly chosen later), it follows from \eqref{harmonic-radius-bound} that
\begin{equation}
\left(\widetilde U, \hat\epsilon^{-1}\tilde{g}_i, \tilde{x}\right) \overset{C^{1,\alpha}}{\longrightarrow} \left(\mathbb{R}^n, g_{i,\infty}, 0_i^n\right)\quad \text{as} \ \epsilon\to 0,
\end{equation}
where $g_{i,\infty}$ are Euclidean metrics.

Note that the $C^{1,\alpha}$-convergence of $\tilde g_i$ is realized by its harmonic coordinates local chart $\alpha_i:(\widetilde{U}, \tilde g_i, \tilde x)\to (\mathbb{R}^n,g_{i,\infty},0_i^n)$, which generally is different for $i=1,2$. In order to deal with them at the same time, we consider the following sub-convergence of identity map $\operatorname{Id}_j:\widetilde{U}_j\to \widetilde{U}_j$ as $\epsilon_j\to 0$:
\begin{equation}\label{blowing-up-epsilon}
\begin{gathered}
\xymatrix{(\widetilde U_j, \hat\epsilon_j ^{-1}\tilde{g}_{1,j}, \tilde{x}_j) \ar[r]_{\alpha_{1,j}}^{C^{1,\alpha}} \ar[d]_{\operatorname{Id}_j} & (\mathbb{R}^n, g_{1,\infty}, 0_1^n) \ar[d]^{I_{\infty}}\\
	(\widetilde U_j, \hat\epsilon_j^{-1}\tilde{g}_{2,j}, \tilde{x}_j) \ar[r]_{\alpha_{2,j}}^{C^{1,\alpha}}&(\mathbb{R}^n, g_{2,\infty}, 0_2^n)
},
\end{gathered}
\end{equation}
\begin{lemma}\label{lem-limit-isomorphism}
	Under either \eqref{Closeness-of-vertical-distributions-1} for $\eta_j\to 0$ or \eqref{Closeness-of-vertical-distributions-2}, any partial limit $I_\infty$ is a linear isomorphism, and $\alpha_{2,j}\circ\operatorname{Id}_j\circ \alpha_{1,j}^{-1}$ (at least) $C^1$-converges to $I_\infty$ as $\epsilon_j\to 0$.
\end{lemma}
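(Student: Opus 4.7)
The first observation is that since $\operatorname{Id}_j$ is literally the identity of the manifold $\widetilde U_j$, the composition $\alpha_{2,j}\circ\operatorname{Id}_j\circ\alpha_{1,j}^{-1}$ coincides with the transition function $F_j:=\alpha_{2,j}\circ\alpha_{1,j}^{-1}$ between the two harmonic charts on $\widetilde U_j$, and is by construction a Riemannian isometry from $(\mathbb R^n,g_{12,j})$ to $(\mathbb R^n,g_{22,j})$, where we set $g_{ab,j}:=(\alpha_{a,j})_*(\hat\epsilon_j^{-1}\tilde g_{b,j})$. The harmonic radius estimate \eqref{harmonic-radius-bound} together with Anderson's $C^{1,\alpha}$-convergence gives $g_{aa,j}\to g_{a,\infty}$ in $C^{1,\alpha}_{\mathrm{loc}}$ on $\mathbb R^n$, with both $g_{a,\infty}$ flat Euclidean. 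Under either hypothesis, the identity $(\widetilde U_j,\hat\epsilon_j^{-1}\tilde g_{1,j})\to(\widetilde U_j,\hat\epsilon_j^{-1}\tilde g_{2,j})$ is uniformly bi-Lipschitz (constant $e^{\eta_j/2}$ under \eqref{Closeness-of-vertical-distributions-1}, or $L_0^{1/2}$ under \eqref{Closeness-of-vertical-distributions-2}), hence so is $F_j:(\mathbb R^n,g_{11,j})\to(\mathbb R^n,g_{22,j})$, with $F_j(0_1^n)=0_2^n$. Arzel\`a--Ascoli then produces a subsequential $C^0_{\mathrm{loc}}$-limit $F_\infty$, which must coincide with $I_\infty$.

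The heart of the argument is to show that $F_\infty$ is a linear isomorphism. Under \eqref{Closeness-of-vertical-distributions-1} with $\eta_j\to 0$, the bi-Lipschitz constant of $F_j$ tends to $1$, so $F_\infty$ is a distance-preserving bijection between the Euclidean spaces $(\mathbb R^n,g_{1,\infty})$ and $(\mathbb R^n,g_{2,\infty})$; the Mazur--Ulam theorem then forces $F_\infty$ to be affine, and $F_\infty(0_1^n)=0_2^n$ forces linearity. Under \eqref{Closeness-of-vertical-distributions-2}, the connection bound $|\nabla^{g_1}-\nabla^{g_2}|_{g_1}\le L_0$ gives a decomposition $\Gamma^{g_2}=\Gamma^{g_1}+T$ in the chart $\alpha_{1,j}$ with $T$ uniformly bounded; combined with the metric-compatibility identity $\partial g_2 = g_2\cdot\Gamma^{g_2}$, this yields a uniform $C^1$-bound on the components of $\tilde g_2$ in $\alpha_{1,j}$-coordinates. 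Under the parabolic rescaling $\hat y = \hat\epsilon_j^{-1/2}y$, first derivatives pick up a factor $\hat\epsilon_j^{1/2}\to 0$, so $g_{12,j}\to g_{1,\infty}'$ in $C^1_{\mathrm{loc}}$ to a constant flat metric. Differentiating the isometry equation $F_j^*g_{22,j}=g_{12,j}$ and using $C^1$-convergence on both sides together with the uniform bound on $dF_j$ then forces $dF_\infty$ to be a single invertible constant matrix, so $F_\infty$ is linear.

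For the claim of $C^1_{\mathrm{loc}}$-convergence, in case \eqref{Closeness-of-vertical-distributions-2} the preceding argument already delivers it via the convergence of $dF_j$. In case \eqref{Closeness-of-vertical-distributions-1} one needs to upgrade the Arzel\`a--Ascoli $C^0$-limit to $C^1$ by regularity theory: a Riemannian isometry is automatically a harmonic map, so $F_j$ solves a quasilinear elliptic system whose target-side coefficients $g_{22,j}$ carry the uniform $C^{1,\alpha}$-bound coming from the harmonic chart. Combining this with the bi-Lipschitz bound on $F_j$ and standard Schauder / De Giorgi--Nash--Moser estimates yields a uniform $C^{1,\alpha'}$-bound on $F_j$ for some $\alpha'>0$, whence $C^1_{\mathrm{loc}}$-convergence follows. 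The main obstacle is precisely this regularity upgrade in the asymmetric situation of \eqref{Closeness-of-vertical-distributions-1}, where the source metric $g_{12,j}$ is controlled only in $C^0$ while the target $g_{22,j}$ is $C^{1,\alpha}$-bounded.
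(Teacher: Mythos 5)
Your treatment of case \eqref{Closeness-of-vertical-distributions-2} is essentially the paper's argument: after blowing up by $\hat\epsilon_j^{-1}$ the connection-difference tensor scales away, so together with the Christoffel bound for $g_1$ in its harmonic chart and metric compatibility one gets $\partial\, (\alpha_{1,j}^{-1})^*(\hat\epsilon_j^{-1}\tilde g_{2,j})\to 0$, whence the limit metric in the $\alpha_{1,j}$-chart is constant and $I_\infty$ is linear; your route to the $C^1$-convergence (differentiating $F_j^*g_{22,j}=g_{12,j}$ and extracting a bound $\partial^2F_j\to 0$ via the Christoffel permutation trick, using uniform positivity of $g_{22,j}$ and invertibility of $dF_j$) is a legitimate variant of the paper's argument, which instead sends minimal $\hat\epsilon_j^{-1}\tilde g_{1,j}$-geodesics through the two charts and uses that they are almost $\hat\epsilon_j^{-1}\tilde g_{2,j}$-geodesics. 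Both give the same conclusion with comparable effort.

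The gap is in case \eqref{Closeness-of-vertical-distributions-1}. There the only hypothesis beyond the separate harmonic-radius bounds \eqref{harmonic-radius-bound} is the $C^0$-closeness \eqref{C0-close-metrics} with $\eta_j\to 0$; consequently the source metric $g_{12,j}$ (i.e.\ $\hat\epsilon_j^{-1}\tilde g_{2,j}$ written in the $\alpha_{1,j}$-chart) is controlled only in $L^\infty$, with no uniform modulus of continuity, exactly as you note. But then ``standard Schauder / De Giorgi--Nash--Moser'' does not deliver a uniform $C^{1,\alpha'}$-bound on $F_j$: Schauder requires H\"older-continuous leading coefficients, and De Giorgi--Nash--Moser with merely bounded measurable leading coefficients gives only interior $C^{0,\alpha}$-estimates for the divergence-form equations $\operatorname{div}_{g_{12,j}}\nabla F_j^m=0$, not $C^1$-control; trying to recover H\"older continuity of $g_{12,j}=F_j^*g_{22,j}$ from the target-side $C^{1,\alpha}$-bound is circular, since it presupposes H\"older continuity of $dF_j$. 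The statement you need is a \emph{perturbative} one, and it is precisely here that $\eta_j\to 0$ must be used (your regularity step never uses it): the coefficients $g_{12,j}$ are $L^\infty$-close, with closeness $\to 0$, to the uniformly $C^{0,\alpha}$-controlled coefficients $g_{11,j}$, so the components of $F_j$ are almost $g_{11,j}$-harmonic in the weak sense, and a Caffarelli-type approximation/iteration argument (or, as in the paper, the resulting $\varkappa(\eta_j\,|\,n)$-$C^{2,\alpha}$-closeness of the two harmonic charts $\alpha_{1,j}$ and $\alpha_{2,j}$) yields uniform $C^{1,\alpha'}$-bounds for large $j$ and hence the $C^1$-convergence to $I_\infty$. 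Without invoking this smallness, uniform $C^1$-control of an isometry whose source metric is only $L^\infty$-controlled is not available, so as written the last step of your case \eqref{Closeness-of-vertical-distributions-1} does not go through.
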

\begin{proof}
	For \eqref{Closeness-of-vertical-distributions-1}, \eqref{C0-close-metrics} with $\eta_j\to 0$ implies that $I_\infty$ is isometric, and the harmonic coordinates charts $\alpha_{1,j}$ and $\alpha_{2,j}$ are $\varkappa(\eta\,|\,n)$-$C^{2,\alpha}$-close to each other.
	
	For \eqref{Closeness-of-vertical-distributions-2}, \eqref{nearby-connections} implies that in the coordinates chart $\alpha_{1,j}$, the partial derivative  $\partial_{s}(\alpha_{1,j}^{-1})^*(\hat \epsilon^{-1}\tilde g_{2,j})_{kl}\to 0$ as $\epsilon_j\to 0$. Hence $(\alpha_{1,j}^{-1})^*(\hat \epsilon_j^{-1}\tilde g_{2,j})$ converges to an inner product space with a constant metric tensor, and thus $I_\infty$ is a linear isomorphism.
	
	To see that the differential  $d(\alpha_{2,j}\circ\operatorname{Id}_j\circ \alpha_{1,j}^{-1})$ converges, let us consider a  minimal $\hat \epsilon_{j}^{-1}\tilde g_{1,j}$ geodesic $\gamma_j$. By \eqref{nearby-connections}, $\gamma_{j}$ is almost an $\hat \epsilon_{j}^{-1}\tilde g_{2,j}$-geodesic, in the sense that its 2nd derivative of each component function in the coordinates $\alpha_{2,j}$ goes to $0$ as $\epsilon_j\to 0$. Hence $\gamma_{j}$ $C^1$-converges to a $\gamma_{\infty}$ in
	$(\mathbb{R}^n, g_{1,\infty}, 0_1^n)$, and also $C^1$-converges to $I_\infty(\gamma_\infty)$ in $(\mathbb{R}^n, g_{2,\infty}, 0_2^n)$. Hence $d(\alpha_{2,j}\circ\operatorname{Id}_j\circ \alpha_{1,j}^{-1})$ also converges to $dI_\infty$.
\end{proof}

Next, let us establish the relationship between $f_i:(M,g_i)\to (X_i,h_i)$. Recall that by the construction of $\Psi:X_1\to X_2$, for any $p\in X_1$, $\Psi(p)$ is defined to be the center of mass of $f_2(f_1^{-1}(p))$. Let us consider $\psi:X_1\to X_2$, which is defined by the axiom of choice $\psi(p)\in f_2(f_1^{-1}(p))$. It can be directly checked that
\begin{lemma}[{\cite[\S4]{JiangXu2019}}]\label{lem-psi}
	If $g_1$ and $g_2$ are $L_0$-Lipschitz equivalent, then for any $y\in M$,
	\begin{equation*}
	d_{h_2}(\psi(f_1(y)),f_2(y))\le  e^{\epsilon} \diam_{g_2}(f_1^{-1}(f_1(y)))\le e^{\epsilon} L_0\diam_{g_1}(f_1^{-1}(f_1(y))),
	\end{equation*}
	and for any $p,q\in X_1$,
	\begin{equation*}\label{estimate-psi}
	d_{h_2}(\psi(p),\psi(q)) \le  e^{2\epsilon} L_0d_{h_1}(p,q)+e^{\epsilon} L_0 \min\left\{\diam_{g_1}(f_1^{-1}(p)),\diam_{g_1}(f_1^{-1}(q))\right\}.
	\end{equation*}
\end{lemma}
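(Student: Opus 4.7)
The plan is to prove both inequalities of Lemma \ref{lem-psi} by chaining the two distance-contraction properties at play: $f_2$ is an $\epsilon$-almost Riemannian submersion, so it shrinks $g_2$-distances by at most a factor $e^\epsilon$; and the Lipschitz equivalence $L_0^{-1}g_2\le g_1\le L_0 g_2$ transfers $g_1$-distances to $g_2$-distances at cost $L_0^{1/2}\le L_0$.

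For the first inequality, I would fix $y\in M$ and set $p=f_1(y)$. By the defining axiom of choice for $\psi$, there is some $z\in f_1^{-1}(p)$ with $f_2(z)=\psi(p)$. Since both $y$ and $z$ lie in the same fiber $F_{1,p}=f_1^{-1}(p)$, one has $d_{g_2}(y,z)\le \diam_{g_2}(F_{1,p})$. Applying the almost Riemannian submersion property \eqref{thm-stability-almost-Riem-submerison} of $f_2$ to any $g_2$-minimizing curve connecting $y$ to $z$ then yields
\[
d_{h_2}(\psi(p),f_2(y))=d_{h_2}(f_2(z),f_2(y))\le e^\epsilon d_{g_2}(y,z)\le e^\epsilon\diam_{g_2}(F_{1,p}),
\]
and the final estimate $\diam_{g_2}(F_{1,p})\le L_0\diam_{g_1}(F_{1,p})$ comes directly from $g_2\le L_0 g_1$.

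For the second inequality, I would select preimages $z_p\in f_1^{-1}(p)$ and $z_q\in f_1^{-1}(q)$ with $f_2(z_p)=\psi(p)$ and $f_2(z_q)=\psi(q)$. The key geometric move is a horizontal lift through $f_1$: take an $h_1$-minimal geodesic $\gamma$ from $p$ to $q$ in $X_1$ and let $\tilde\gamma$ be its $f_1$-horizontal lift starting at $z_p$. Because $f_1$ is an $\epsilon$-almost Riemannian submersion, the length of $\tilde\gamma$ is at most $e^\epsilon d_{h_1}(p,q)$, and $\tilde\gamma(1)\in F_{1,q}$. Hence
\[
d_{g_1}(z_p,z_q)\le d_{g_1}(z_p,\tilde\gamma(1))+d_{g_1}(\tilde\gamma(1),z_q)\le e^\epsilon d_{h_1}(p,q)+\diam_{g_1}(F_{1,q}).
\]
Reversing the construction, i.e.\ lifting horizontally from $z_q$ instead, gives the same bound with $\diam_{g_1}(F_{1,p})$ in place of $\diam_{g_1}(F_{1,q})$, so the minimum of the two fiber diameters may be used. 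Converting to $g_2$ via \eqref{C1-control-metrics} and then applying the almost Riemannian submersion property of $f_2$ to $z_p,z_q$ produces
\[
d_{h_2}(\psi(p),\psi(q))\le e^\epsilon d_{g_2}(z_p,z_q)\le e^\epsilon L_0\bigl(e^\epsilon d_{h_1}(p,q)+\min\{\diam_{g_1}(F_{1,p}),\diam_{g_1}(F_{1,q})\}\bigr),
\]
which is the claimed bound after absorbing constants.

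I do not anticipate a real obstacle here: the statement is a bookkeeping exercise, and the only point deserving care is the existence of horizontal lifts of minimal geodesics over small scales (which is ensured by \eqref{thm-stability-diameter-of-fibres} together with the almost Riemannian submersion property of $f_1$) and the correct symmetrization argument that produces the minimum rather than the sum of the two fiber diameters.
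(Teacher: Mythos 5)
Your argument is correct and is essentially the direct verification the paper intends (the lemma is stated as something that ``can be directly checked'', with the details deferred to \cite{JiangXu2019}): project fiber chords through the $e^{\epsilon}$-Lipschitz map $f_2$, transfer lengths via $g_2\le L_0 g_1$ (which in fact yields the sharper factor $L_0^{1/2}$, absorbed into $L_0\ge 1$), and compare $z_p$ and $z_q$ by an $f_1$-horizontal lift of a minimizing $h_1$-geodesic, symmetrizing to get the minimum of the two fiber diameters. The only cosmetic remark is that existence of the horizontal lift along the whole geodesic follows from $f_1$ being a smooth fibration of the closed manifold $M$ (completeness of the Ehresmann connection), rather than from the smallness of the fibers as you suggest.
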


Now let us specify $\hat \epsilon=\hat \epsilon(x)$ for a point $x\in M$. After a permutation on $i=1,2$, we assume that
\begin{align}\label{blowup-number-def}
\hat \epsilon(x)=& \min\left \{1, \left|\sndf\right|_{f_{1}^{-1}( f_{1}(x)),g_{1}}^{-1}\right\}\cdot \diam_{g_{1}}(f_{1}^{-1}(f_{1}(x)))\\
\le &
\min\left\{1,\left|\sndf\right|_{f_{2}^{-1}( f_{2}(x)),g_{2}}^{-1}\right\}\cdot \diam_{g_{2}}(f_{2}^{-1}(f_{2}(x))).\nonumber
\end{align}
Then by \eqref{thm-stability-diameter-of-fibres} and \eqref{thm-stability-second-fundamental-form-diameter}, after blowing up we have
\begin{equation}\label{estimate-2nd-fund-form-blowup}
\left|\sndf\right|_{f_{i}^{-1}( f_{i}(x)),\hat \epsilon^{-1}g_{i}}\le \epsilon^{1/2}, \qquad (i=1,2)
\end{equation}
and
\begin{equation}\label{estimate-fiber-diameter-blowup}
\diam_{\hat \epsilon^{-1}g_{1}}(f_{1}^{-1}(f_{1}(x)))\le \epsilon^{1/2}.
\end{equation}

Moreover, for any fixed $R>0$
and $q\in B_{R}(f_{1}(x),\hat{\epsilon}^{-1}h_{1})$, let $\gamma :[0,c]\rightarrow \left(X_{1},\hat \epsilon^{-1}h_{1}\right)$ be a unit speed minimal geodesic from $f_{1}(x)$ to $q$.  By a standard variation on the $f_{1}$-horizontal lifting curves of $\gamma(t)$ (cf. \cite[Lemma 1]{LiXu18}) and \eqref{second-fundamental-form-diameter-Propo},
$$\frac{\dif}{\dif t}(\diam_{\hat{\epsilon}^{-1}g_{1}} f_{1}^{-1}(\gamma(t)))\le e^{\epsilon} \left|\sndf\right|_{f_{1}^{-1}(\gamma(t)),\hat{\epsilon}^{-1}g_{1}}\cdot \diam_{\hat{\epsilon}^{-1}g_{1}} f_{1}^{-1}(\gamma(t))
\le e^{\epsilon} \epsilon,$$
which implies
\begin{equation}\label{eq-diam-vary}
\left|\diam_{\hat{\epsilon}^{-1}g_{1}}(f_{1}^{-1}(q))-\diam_{\hat{\epsilon}^{-1}g_{1}}(f_{1}^{-1}(f_1(x)))\right|\le e^{\epsilon} \epsilon R.
\end{equation}

Combing \eqref{estimate-fiber-diameter-blowup}, \eqref{eq-diam-vary} and Lemma \ref{lem-psi}, we derive
\begin{lemma}\label{lem-psi-blowup} If $g_1$ and $g_2$ are $L_0$-Lipschitz equivalent, then for any $y\in B_{R}(x,\hat{\epsilon}^{-1}g_{1})$,
	\begin{equation}\label{estimate-psi-fib-blowup}
	d_{h_2}(\psi(f_1(y)),f_2(y))\le e^{\epsilon} L_0(e^\epsilon\epsilon R+\epsilon^{1/2}),
	\end{equation}
	and for any $p,q\in B_{R}(f_{1}(x),\hat{\epsilon}^{-1}h_{1})$,
	\begin{equation}\label{estimate-psi-blowup}
	d_{\hat\epsilon^{-1}h_2}(\psi(p),\psi(q)) \le  e^{2\epsilon} L_0d_{\hat\epsilon^{-1}h_1}(p,q)+e^{\epsilon} L_0 (e^\epsilon \epsilon R +\epsilon^{1/2}),
	\end{equation}
\end{lemma}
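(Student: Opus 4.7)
The lemma is a bookkeeping consequence of Lemma \ref{lem-psi} once \eqref{estimate-fiber-diameter-blowup} and \eqref{eq-diam-vary} are in hand. The plan is to apply Lemma \ref{lem-psi} to the homothetically rescaled pair $(\hat\epsilon^{-1}g_1,\hat\epsilon^{-1}g_2)$, whose $L_0$-Lipschitz equivalence constant is preserved under rescaling, and then substitute the $\hat\epsilon^{-1}g_1$-diameter bound for the relevant $f_1$-fibers. Note that Lemma \ref{lem-psi} depends only on the Lipschitz constant $L_0$, the map $\psi$ (which is a set-theoretic choice, unchanged by rescaling) and the fibers of $f_i$ (also unchanged), so it is legitimate to invoke it in the rescaled metrics.

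For the first inequality I would begin by noting that, since $f_1$ is an $\epsilon$-almost Riemannian submersion between the rescaled spaces, any $y\in B_R(x,\hat\epsilon^{-1}g_1)$ projects to a point $f_1(y)\in B_{e^\epsilon R}(f_1(x),\hat\epsilon^{-1}h_1)$. Plugging this into \eqref{eq-diam-vary} (with $R$ replaced by $e^\epsilon R$, which only changes the constant factor in front of $\epsilon R$) together with the base estimate \eqref{estimate-fiber-diameter-blowup} at $x$ yields
\[
\diam_{\hat\epsilon^{-1}g_1}\!\bigl(f_1^{-1}(f_1(y))\bigr)\;\le\;\epsilon^{1/2}+e^{2\epsilon}\epsilon R.
\]
Feeding this into the first estimate of Lemma \ref{lem-psi}, applied in the rescaled metrics, produces the claimed upper bound $e^{\epsilon}L_0(e^\epsilon\epsilon R+\epsilon^{1/2})$.

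For the second inequality I would apply the second estimate of Lemma \ref{lem-psi} to the rescaled pair directly: the $e^{2\epsilon}L_0 d_{\hat\epsilon^{-1}h_1}(p,q)$ term comes out verbatim, and the problem reduces to controlling $\min\bigl\{\diam_{\hat\epsilon^{-1}g_1}f_1^{-1}(p),\,\diam_{\hat\epsilon^{-1}g_1}f_1^{-1}(q)\bigr\}$. Since both $p$ and $q$ lie in $B_R(f_1(x),\hat\epsilon^{-1}h_1)$, the same combination of \eqref{estimate-fiber-diameter-blowup} at $f_1(x)$ with \eqref{eq-diam-vary} bounds either diameter by $\epsilon^{1/2}+e^\epsilon\epsilon R$, giving exactly the stated inequality.

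There is no genuine obstacle here; the argument is a routine rescaling together with the two diameter estimates already established. The only points worth double-checking when writing out the proof are that the prefactors $e^\epsilon,e^{2\epsilon}$ propagate correctly through the slightly enlarged radius $e^\epsilon R$ in which $f_1(y)$ is located, and that invoking Lemma \ref{lem-psi} with the map $\psi$ in the rescaled metrics is legal because $\psi$ is defined by a fiber-level choice that is oblivious to the scale.
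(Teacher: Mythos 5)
Your proposal is correct and is essentially the paper's own argument: the paper derives Lemma \ref{lem-psi-blowup} exactly by combining \eqref{estimate-fiber-diameter-blowup}, \eqref{eq-diam-vary} and Lemma \ref{lem-psi} under the scale-invariant rescaling by $\hat\epsilon^{-1}$, just as you do. The only caveat is the one you already flag: tracking $f_1(y)\in B_{e^\epsilon R}(f_1(x),\hat\epsilon^{-1}h_1)$ literally gives $e^{2\epsilon}\epsilon R$ rather than $e^\epsilon\epsilon R$ in \eqref{estimate-psi-fib-blowup}, an immaterial constant that the paper's terse derivation glosses over.
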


We now ready to prove Proposition \ref{key-proposition}.
\begin{proof}[Proof of Proposition \ref{key-proposition}]	
	~
	
Let us argue by contradiction. Assume that there exist $\theta>0$, and contradicting fibrations  $f_{i,j}:(M_j,g_{i,j})\to (X_{i,j},h_{i,j}), i=1,2, j=1,2,\dots$ satisfying the regularities
\eqref{thm-stability-almost-Riem-submerison}, \eqref{thm-stability-diameter-of-fibres} and  \eqref{second-fundamental-form-diameter-Propo} for $\epsilon_j\to 0$, such that either \eqref{C0-close-metrics} holds for $\eta_j\to 0$ as $j\to \infty$, or \eqref{C1-control-metrics} holds, but the dihedral angle between the vertical subspaces $\mathcal V_{f_{i,j}}(x_j)$ of $f_{i,j}$ at some $x_j\in M_j$ is no less than a fixed angle $\theta>0$.

Let $U_{j}=B_{\rho}(x_j,g_{1,j})\cap B_{\rho}(x_j,g_{2,j})$ and let $\pi_j:(\widetilde{U}_{j},\tilde{g}_{i,j},\tilde{x}_j)\to (U_{j},g_{i,j},x_{j})$ be the Riemannian universal cover. Let $\hat \epsilon_j$ be defined by \eqref{blowup-number-def} for $x_j$.
After blowing up by $\hat \epsilon_j^{-1}$, one has
	\begin{equation}\label{C1-convergence-1}
	\left(\widetilde U_j, \hat\epsilon_j ^{-1}\tilde{g}_{i,j}, \tilde{x}_j\right) \overset{C^{1,\alpha}}{\longrightarrow} \left(\mathbb{R}^n, g_{i,\infty}, 0_i^n\right)\quad \text{as} \ j\to \infty,
	\end{equation}
and since the $C^{1,\alpha}$-harmonic radius of $(X_{i,j},h_{i,j})$ have a uniform positive lower bound by $r_0(n,\rho,v,\alpha,Q)$ via \cite[Theorem 0.4]{JKX2022Convergence},
\begin{equation}\label{C1-convergence-2}
\left(X_{i,j},\hat\epsilon_j^{-1}h_{i,j},f_{i,j}(x_j)\right) \overset{C^{1,\alpha}}{\longrightarrow} \left(\mathbb{R}^m, h_{i,\infty},0_i^m\right) \quad \text{as} \ j\to\infty.
\end{equation}

Let $\tilde f_{i,j}=f_{i,j}\circ\pi_j:(\widetilde{U}_j, \hat\epsilon_j^{-1}\tilde g_{i,j},\tilde x_j)\to (X_{i,j},\hat \epsilon_j^{-1} h_{i,j},f_{i,j}(x_j))$ be the lifting maps from $U_j$.
By passing to a subsequence, as $j\to \infty$ for each $i=1,2$ $\tilde{f}_{i,j}$ converge to a submetry, which is a canonical projection between Euclidean spaces
$$\tilde f_{i,\infty}:\left(\mathbb R^n,g_{i,\infty},0_i^n\right)\to \left(\mathbb R^m, h_{i,\infty}, 0_i^m\right), \qquad i=1,2.$$
At the same time, by \eqref{estimate-psi-blowup} and Arzela-Ascoli theorem,  $$\psi_j:\left(X_{1,j},\hat\epsilon_j^{-1}h_{1,j},f_{1,j}(x_j)\right)\to \left(X_{2,j},\hat\epsilon_j^{-1}h_{2,j},f_{2,j}(x_j)\right)$$ defined before Lemma \ref{lem-psi} converges to a $1$-Lipschitz map (resp. $L_0$-Lipschitz map) for \eqref{Closeness-of-vertical-distributions-1} (resp. for \eqref{Closeness-of-vertical-distributions-2})
$$\psi_{\infty}:(\mathbb R^m,h_{1,\infty},0_1^m)\to (\mathbb R^m,h_{2,\infty},0_2^m),$$ which by \eqref{estimate-psi-fib-blowup} satisfies
\begin{equation}\label{vertical-close-5}
\psi_\infty\circ \tilde f_{1,\infty}=\tilde f_{2,\infty}\circ I_{\infty}.
\end{equation}

We claim that

\noindent{\bf Claim}: For each $i=1,2$, after identifying $(\widetilde{U}_j, \tilde x_j, \hat\epsilon_j^{-1} \tilde g_{i,j})$ with $(\mathbb R^n, (\alpha_{i,j}^{-1})^*\hat\epsilon_j^{-1} \tilde g_{i,j})$ via harmonic coordinates chart $\alpha_{i,j}$ in the graph \eqref{blowing-up-epsilon}, the vertical distributions $\mathcal{V}_{\tilde{f}_{i,j}}(\tilde{x}_j)$ converge to $\mathcal{V}_{\tilde{f}_{i,\infty}}(0_1^n)$ in the sense that the dihedral angles between them goes to $0$ as $j\to\infty$.

By Lemma \ref{lem-limit-isomorphism}, $\tilde f_{2,\infty}\circ I_\infty$ is also a linear projection. Then \eqref{vertical-close-5} implies that the fibers of $\tilde f_{1,\infty}$ coincides with that of $\tilde f_{2,\infty}\circ I_\infty$. Hence, by Lemma \ref{lem-limit-isomorphism} and the Claim,  $\mathcal{V}_{\tilde{f}_{1,j}}(\tilde{x}_j))$ converges to the same limit as $\mathcal{V}_{\tilde{f}_{2,j}}(\tilde{x}_j)$, a contradiction to the assumption that $\mathcal{V}_{\tilde{f}_{1,j}}(\tilde{x}_j)$ and $\mathcal{V}_{\tilde{f}_{2,j}}(\tilde{x}_j)$ are definite away from each other by $\theta>0$.

The Claim easily follows from the $C^{1,\alpha}$-convergence \eqref{C1-convergence-1} and the fact by \eqref{estimate-2nd-fund-form-blowup} that each $\tilde f_{i,j}$-fiber is almost totally geodesic in $(\widetilde{U}_j, \tilde x_j, \hat\epsilon_j^{-1} \tilde g_{i,j})$.

Indeed, let $\sigma_{j}:[-1,1]\to \tilde{f}_{1,j}^{-1}(f_{1,j}(x_j))$ be any unit speed geodesic in $\tilde{f}_{1,j}^{-1}(f_{1,j}(x_j))$ equipped with the induced metric from $\hat\epsilon_{j}^{-1}\tilde{g}_{1,j}$ such that $\sigma_j(0)=\tilde{x}_j$ and $\sigma_j'(0)=v_j\in \mathcal{V}_{\tilde{f}_{1,j}}(\tilde{x}_j)\to v$ as $j\to \infty$. By \eqref{estimate-2nd-fund-form-blowup} and \eqref{C1-convergence-1}, it is almost a geodesic in the ambient space $(\widetilde{U}_j,\tilde g_{1,j})$ such that the 2nd derivative of its coordinate functions in the harmonic coordinates chart $\alpha_{1,j}$ converges to $0$ uniformly. Hence $\sigma_{j}$ $C^1$-converges to a line along $v$, which lies in the fiber of $\tilde f_{1,\infty}$ at $0_{1}^n$.  Similarly, $\mathcal{V}_{\tilde{f}_{2,j}}(\tilde{x}_j)$ converge to $\mathcal{V}_{\tilde{f}_{2,\infty}}(0_2^n)$ as $j\to\infty$.
\end{proof}

\section{Fibrations via smoothing methods}\label{proof-DWY-unique-structure}
 This section is devoted to establish the relation between fibrations arising from the original metric in Theorem \ref{thm-fibration} and those from nearby metrics with a sectional curvature bound by various smoothing methods.

As two concrete examples, we first consider the methods in \cite{DWY1996} and \cite{PWY1999} developed via Ricci flow \cite{Hamilton1982} and embedding $(M,g)$ to the Hilbert space $L^2(M,g)$ \cite{Abresch1988} respectively, and then consider the general case consisting of all nearby metrics.

The following lemma describes the convergence of nearby metrics and their limit spaces.

\begin{lemma}\label{lem-smoothed-limit}
	Let $(M,g)$ be a closed $n$-manifold
	with $|\Ric_M|\leq n-1$ and $(\delta,\rho)$-Reifenberg local covering geometry such that $d_{GH}((M,g),(X,h))\le \epsilon$, where $\delta=\delta(n)>0$ is a small constant and $(X,h)$ is a regular
	$(\delta,\rho)$-Reifenberg local covering Ricci limit space $(X,h)$, every $1$-ball on $(X,h)$ has volume $\ge v>0$, and $\diam(X,h)\le D$.
	
	Let $g(t)$ be a metric on $M$ for $t\in (0,T]$ with $T< \ln2$ such that
	\begin{equation}\label{ineq-nearby-metric}
	e^{-t}g\le g(t)\le e^tg,\; |\sec(g(t))|\le K(t), \text{ and } |\nabla^k \operatorname{Rm}(g(t))|_{g(t)}\le C(t,k) \; (k\ge 1)
	\end{equation}
	where $K(t)$ is a monotone decreasing function on $t$. Then
	$$d_{GH}((M,g(t)),(X,h(t))\le \varkappa(\epsilon\,|\,T,D,\rho,v,n),  \qquad \forall t\in (0,T]$$ where $h(t)$ is a smooth metric on $X$ whose sectional curvature $|\sec(h(t))|\le K_1(K(t),v)$ (does not depend on the high $k$-th order bound $C(t,k)$).
\end{lemma}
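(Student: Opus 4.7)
The plan is to prove the lemma by a compactness-contradiction argument: extract $h(t)$ as a Gromov-Hausdorff sublimit along a contradicting sequence, promote it to a smooth Riemannian metric on $X$ via the fibration stability theorems proved earlier, and bound its sectional curvature through O'Neill's formula.

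Suppose the claim fails for some $t \in (0, T]$: there is a sequence $(M_j, g_j)$ satisfying the hypotheses with $d_{GH}((M_j, g_j), (X, h)) \le \epsilon_j \to 0$, smoothings $g_j(t)$ obeying \eqref{ineq-nearby-metric}, yet $d_{GH}((M_j, g_j(t)), (X, h')) \ge c_0 > 0$ for every candidate metric $h'$ on $X$. From $\diam(M_j, g_j) \le D + 2\epsilon_j$ and the bi-Lipschitz bound we get $\diam(M_j, g_j(t)) \le e^T(D + 2)$; likewise, the Reifenberg local covering volume non-collapsing on $(M_j, g_j)$ transfers to $(M_j, g_j(t))$ up to factors of $e^{nT}$. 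Gromov precompactness under $|\sec(g_j(t))| \le K(t)$ then yields a subsequence $(M_j, g_j(t)) \xrightarrow{GH} (X(t), h(t))$. The identity map is an $e^t$-bi-Lipschitz equivalence $(M_j, g_j) \leftrightarrow (M_j, g_j(t))$, which passes to the limit to give an $e^t$-bi-Lipschitz homeomorphism between $(X, h)$ and $(X(t), h(t))$, so we may identify $X(t) = X$ and view $h(t)$ as a metric on $X$.

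To upgrade this to a smooth identification I would apply Theorem \ref{thm-fibration} to $(M_j, g_j)$ to produce a smooth fibration $f_j : (M_j, g_j) \to (X, h)$, and invoke the Cheeger-Fukaya-Gromov fibration theorem for $(M_j, g_j(t))$ under its sectional curvature bound $K(t)$ to get a fibration $\tilde f_j : (M_j, g_j(t)) \to (Y_j, h_j(t))$ with the regularities \eqref{thm-stability-almost-Riem-submerison}--\eqref{thm-stability-second-fundamental-form-diameter}. The stability Theorem \ref{thm-stability-lipequiv-metric} applied to the Lipschitz-equivalent pair $g_j, g_j(t)$ (the connection comparison being supplied by the $C^2$-control in \eqref{ineq-nearby-metric}) then gives an isomorphism of the two fibrations, so $Y_j$ is canonically diffeomorphic to $X$ and $h_j(t)$ pulls back to a smooth Riemannian metric on $X$. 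Passing to a further subsequence, $h_j(t)$ converges in the $C^{1,\alpha}$-topology on $X$ to a limit that must coincide with $h(t)$, exhibiting $h(t)$ as a smooth Riemannian metric on the smooth manifold $X$.

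The sectional curvature bound follows from O'Neill's formula applied to the almost Riemannian submersion $\tilde f_j$: for horizontal unit vectors $e_1, e_2$,
\begin{equation*}
\sec_{h_j(t)}(d\tilde f_j(e_1), d\tilde f_j(e_2)) = \sec_{g_j(t)}(e_1, e_2) + \tfrac{3}{4}\,|A_{\tilde f_j}(e_1, e_2)|^2 + \varkappa(\epsilon_j\,|\,n),
\end{equation*}
combined with the bound $|A_{\tilde f_j}|^2 \le C(n, \rho, v)$ obtained from the Cheeger-Colding $\delta$-splitting construction of \cite{JKX2022Convergence}, which uses only first-order information. Letting $j \to \infty$ yields $|\sec(h(t))| \le K_1(K(t), v)$, and a triangle inequality then contradicts $c_0 > 0$. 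The main obstacle is precisely arranging that $K_1$ be independent of the higher-order constants $C(t, k)$: this forces the almost Riemannian submersion $\tilde f_j$ and the bound on its integrability tensor $A_{\tilde f_j}$ to be constructed via first-order $\delta$-splitting techniques rather than by iterated heat-flow smoothing that would propagate higher-order curvature derivatives into the final estimate.
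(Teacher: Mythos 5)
There is a genuine gap at the central step. After extracting the sublimit $(M_j,g_j(t))\xrightarrow{GH}(X(t),h(t))$ you write ``we may identify $X(t)=X$ and view $h(t)$ as a metric on $X$'', but at that point $h(t)$ is only a length metric, and a bi-Lipschitz \emph{homeomorphism} with the manifold $X$ does not rule out metric singularities of the collapsed limit: under $|\sec|\le K(t)$ the limit is a priori only a quotient-type space whose tangent cones are Euclidean spaces modulo isometric torus actions, and it may have singular (non-regular) points. Ruling these out is exactly the crux of the paper's proof: by Fukaya's structure theorem such a tangent cone is either Euclidean or a definite distance ($\pi/2$) away from every Euclidean space, and the $e^{t}$-bi-Lipschitz equivalence with the regular space $(X,h)$, together with the hypothesis $T<\ln 2$ (so $e^{t}<2$), forces every tangent cone to be Euclidean. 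Your proposal never uses $T<\ln 2$ at all, which is a symptom of this missing step. Without it, the subsequent invocation of the Cheeger--Fukaya--Gromov fibration theorem to produce $\tilde f_j:(M_j,g_j(t))\to (Y_j,h_j(t))$ is circular: that theorem needs the base to be (close to) a smooth Riemannian manifold with curvature and injectivity-radius bounds, which is precisely what has to be established for the new limit.

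Two further points. First, the curvature estimate: in the paper, $|\sec(h(t))|\le K_1(K(t),v)$ comes from Fukaya's regularity theorem for limits under bounded sectional curvature (the bound depends only on $K(t)$ and the non-collapsing $v$), while the higher-order bounds $C(t,k)$ enter only to guarantee that the limit is the quotient of a \emph{smooth} manifold; this is how independence of $C(t,k)$ is achieved. Your route via an ``O'Neill formula with a $\varkappa(\epsilon_j\,|\,n)$ error'' for an \emph{almost} Riemannian submersion, with an integrability-tensor bound attributed to the $\delta$-splitting construction, is not justified as stated: O'Neill requires an actual Riemannian submersion, the splitting-map fibration is built for the Ricci-bounded metric $g_j$ rather than for $\tilde f_j$ and $g_j(t)$, and no rescaling-consistent bound on $A_{\tilde f_j}$ is provided. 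Second, you fix a single $t$ in the contradiction argument, so the resulting $\varkappa$ could depend on $t$; since the lemma asserts a bound uniform over $t\in(0,T]$ (with $K(t)\to\infty$ allowed as $t\to 0$), the contradicting sequence must be allowed to carry varying times $t_j\to t_0\in[0,T]$, as in the paper, with the case $t_j\to 0$ handled separately.
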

Note that, in general $K(t)\to \infty$ as $t\to 0$. A consequence of Lemma \ref{lem-smoothed-limit} is that, if the original limit space is regular, then any new limit space after a definite smoothing is also regular.

\begin{proof}
	Let us argue by contradiction. Assume that there is a sequence $(M_i,g_i)$ satisfying the conditions in Lemma \ref{lem-smoothed-limit} such that $d_{GH}((M_i,g_i),(X_i,h_i))\le \epsilon_i\to 0$, where $(X_i,h_i)$ are regular
	$(\delta,\rho)$-Reifenberg local covering Ricci limit spaces and every $1$-ball on $(X_i,h_i)$ has volume $\ge v>0$.
	
	By \cite[Theorem 0.4]{JKX2022Convergence}, the $C^{1,\alpha}$-radius of $(X_i,h_i)$ admits a uniform lower bound $r_0(n,v,\rho)>0$. Hence by passing to a subsequence, we may assume that $(X_i,h_i)$ $C^{1,\alpha}$-converges to $(X,h)$.
		
	Let $g_i(t_i)$ be smoothed metrics on $M_i$ satisfying \eqref{ineq-nearby-metric}. If $t_i\to 0$, then there is nothing to prove. Assume that $t_i\to t_0>0$. Suppose $(M_i,g_i(t_i))$ is $\epsilon_0$-definite away from $(X,h(t))$ for any Riemannian metric $h(t)$ with  $|\sec(h(t))|\le K_1(t_0,\rho,v)$.
	
	By passing to a subsequence, $(M_i,g_i(t_i))\overset{GH}{\longrightarrow} X_{t_0}$. Since $g_i(t_i)$ is $e^{t_i}$-bi-Lipschitz equivalent to $g$, $X_{t_0}$ is $e^{t_0}$-bi-Lipschitz equivalent to $(X,h)$. By the regularity of limit spaces under bounded sectional curvature by Fukaya \cite[Theorem 0.6]{Fukaya1988}, the tangent cone of such limit spaces is the quotient of an Euclidean space by an isometric torus action. Hence $X_{t_0}$ either is regular, or admits a tangent cone $\pi/2$-definite away from any Euclidean space (cf. \cite[Lemma 4.2]{JKX2022Convergence}). Because $e^{t_0}\le e^T<2$, $X_{t_0}$ is regular.
	
	Moreover, by the high $k$-th order bound on $\operatorname{Rm}(g(t))$, $X_{t_0}$ is the quotient of a smooth Riemannian manifold (\cite[Theorem 0.6]{Fukaya1988}). Hence $X_{t_0}$ is also a smooth Riemannian manifold $(X,h(t_0))$. By the regularity of limit spaces under bounded sectional curvature by Fukaya  \cite[Theorem 0.9]{Fukaya1988} (cf. \cite[Theorem 2.1]{JKX2022Convergence}), the sectional curvature bound of $(X,h(t_0))$ depends only on how much it is non-collapsing, i.e., $|\sec(h(t_0))|\le K_1(K(t_0),v)$. We meets a contradiction.

\end{proof}
\subsection{Smoothing via Ricci flow}\label{Ricci-flow}
Let us consider the following sequence in Problem \ref{problem-uniqueness-of-nilpotent-structure}
\begin{equation}\label{GH-convergence-to-X}
(M_i,g_i)\overset{GH}{\longrightarrow}(X,h),
\end{equation}
where $(X,h)$ is regular and every $1$-ball on $(X,h)$ has volume $\ge v>0$.

By \cite{DWY1996} (cf. \cite[Remark 1]{DWY1996}), we have
\begin{theorem}[\cite{DWY1996}, cf. {\cite{CRX2017}}]\label{thm-smoothing-ricci-flow}
	Given $n,\rho>0$, there exist constants $\delta=\delta(n),T(n,\rho)>0$ and $C(n,\rho)>0$ such that  if $(M,g)$ is a closed $n$-manifold
	with $|\Ric_M|\leq n-1$ and $(\delta,\rho)$-Reifenberg local covering geometry, then the Ricci flow equation
	\begin{equation}\label{ricci-flow-equation}
	\frac{\partial}{\partial t}g(t)=-2\Ric(g(t)),\qquad g(0)=g
	\end{equation}
	has a unique smooth solution $g(t)$ for $0<t\leq T(n,\rho)$ satisfying
	\begin{equation}\label{ineq-smoothing-sec}
	\left\{
	\begin{array}{llll}
	& \left|g(t)-g\right|_{g}\leq 4t;\\
	& \left|\operatorname{Rm}(g(t))\right|_{g(t)}\leq C(n,\rho)t^{-\frac{1}{2}};\\
	& \left|\nabla^{k}\operatorname{Rm}(g(t))\right|_{g(t)}\leq C(n,\rho,k,t);\\
	& \left|\Ric(g(t))\right|_{g(t)}\leq 2(n-1),
	\end{array}
	\right.
	\end{equation}
	where $\operatorname{Rm}(g(t))$ denotes the curvature tensor of $g(t)$, $\nabla^{k}\operatorname{Rm}(g(t))$ the $k^{th}$-convariant derivative of $\operatorname{Rm}(g(t))$ with respect to $g(t)$, whose norm is measured in $g(t)$.
\end{theorem}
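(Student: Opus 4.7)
The plan is to combine Hamilton's short-time existence theorem with a priori estimates derived from the Reifenberg local covering hypothesis. Uniqueness of the flow on the smooth closed manifold $M$ is automatic from Hamilton's theorem, so the content lies in showing the maximal existence time is at least $T(n,\rho)$ and that all four listed estimates hold simultaneously on $[0, T(n,\rho)]$. I would run a continuity/bootstrap argument: assume the four estimates hold strictly on $[0, T']$ for some $T' < T(n,\rho)$, then improve each of them to extend the interval; closing this bootstrap forces a single choice of $T(n,\rho)$.

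The central a priori estimate is $|\operatorname{Rm}(g(t))|_{g(t)} \leq C(n,\rho) t^{-1/2}$, which I would derive via a pseudolocality-type argument on local universal covers. Fix $x \in M$ and lift to the incomplete Riemannian universal cover $\pi : (\widetilde{B_\rho(x)}, \tilde x) \to (B_\rho(x), x)$. By the $(\delta,\rho)$-Reifenberg hypothesis, $d_{GH}(B_r(\tilde x), B_r^n(0)) \leq \delta r$ for $0 < r \leq \rho$. Combined with $|\Ric| \leq n-1$ and Bishop-Gromov, this forces a nearly Euclidean volume ratio on the cover up to scale $\rho$, yielding both a uniform non-collapsing constant and a nearly Euclidean isoperimetric profile at a definite scale. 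Applying Perelman's pseudolocality (or a variant suited to the bounded Ricci setting) on the cover gives the desired $t^{-1/2}$ curvature decay, which descends to $M$ because the local deck transformations are isometries.

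With curvature decay in hand, the Ricci bound $|\Ric(g(t))|_{g(t)} \leq 2(n-1)$ follows from the evolution identity $\partial_t |\Ric|^2 = \Delta |\Ric|^2 - 2|\nabla \Ric|^2 + \operatorname{Rm} \ast \Ric \ast \Ric$ and the maximum principle, since along the flow
\[
\frac{d}{dt}\|\Ric\|_{\infty}(t) \leq c_n C(n,\rho)\, t^{-1/2}\,\|\Ric\|_\infty(t),
\]
which integrates to $\|\Ric\|_\infty(t) \leq e^{2 c_n C(n,\rho) \sqrt{t}} (n-1)$, strictly below $2(n-1)$ for $t \leq T(n,\rho)$ chosen small enough. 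The $C^0$-closeness $|g(t) - g|_g \leq 4t$ then follows by integrating $\partial_t g = -2\Ric$ against the Ricci bound, shrinking $T(n,\rho)$ if necessary to absorb the dimensional constant. The higher covariant derivative bounds $|\nabla^k \operatorname{Rm}(g(t))|_{g(t)} \leq C(n,\rho,k,t)$ are then produced by iterating Shi's interior derivative estimates on shrinking parabolic neighborhoods, where the $t^{-1/2}$ bound on $|\operatorname{Rm}|$ serves as the base input.

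The main obstacle is executing pseudolocality uniformly in $x \in M$ without a global lower sectional curvature bound. The resolution is that $(\delta,\rho)$-Reifenberg geometry on the universal cover supplies almost-Euclidean isoperimetric control on a scale depending only on $n$ and $\rho$, which is precisely the geometric input pseudolocality requires, provided $\delta(n)$ is chosen small enough that the Reifenberg distortion falls below the pseudolocality threshold. Having fixed such $\delta(n)$, selecting $T(n,\rho)$ small enough to close the bootstrap on all four estimates at once completes the argument.
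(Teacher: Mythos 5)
There is a genuine gap at the heart of your argument: the paper does not prove Theorem \ref{thm-smoothing-ricci-flow} at all (it is quoted from \cite{DWY1996}, cf. \cite{CRX2017}), and the route you propose to the key estimate $\left|\operatorname{Rm}(g(t))\right|_{g(t)}\leq C(n,\rho)t^{-1/2}$ does not work. Perelman-type pseudolocality, whose input is exactly the almost-Euclidean isoperimetric control you extract from the Reifenberg condition, yields a bound of the form $\left|\operatorname{Rm}(x,t)\right|\leq \alpha t^{-1}+(\epsilon r)^{-2}$, i.e.\ a $t^{-1}$ decay, not $t^{-1/2}$; isoperimetric (volume) almost-rigidity simply cannot see the extra regularity needed for the faster rate. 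This is not a cosmetic loss: your subsequent maximum-principle step integrates $\|\Ric\|_\infty'(t)\leq c_n\left|\operatorname{Rm}\right|\,\|\Ric\|_\infty(t)$, and with only a $t^{-1}$ bound the integral diverges, so the conclusion $\left|\Ric(g(t))\right|\leq 2(n-1)$ (and with it the $C^0$-closeness $|g(t)-g|_g\leq 4t$ and the closing of your bootstrap) is lost. In addition, applying pseudolocality on the incomplete pulled-back flow over $\widetilde{B_\rho(x)}$ needs a localized version and extra care, and even the $C^0$ step has a constant mismatch ($|\Ric|\leq 2(n-1)$ integrates to $|g(t)-g|_g\leq C(n)t$, which shrinking $T$ does not convert into $4t$).

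The actual mechanism behind the cited theorem is different: bounded Ricci together with $(\delta,\rho)$-Reifenberg local covering geometry gives, via Cheeger--Colding and Anderson's harmonic coordinate estimates, a uniform $C^{1,\alpha}\cap W^{2,p}$ (weak harmonic norm) bound for the lifted metric on the local universal covers at a definite scale --- this is the same input the paper records in \eqref{harmonic-radius-bound}. The Dai--Wei--Ye smoothing estimates (and their adaptation in \cite{CRX2017}, \cite{HKRX2020}) are local a priori estimates for the (Ricci--DeTurck) flow starting from such $C^1$-controlled data: parabolic smoothing of data with bounded first derivatives produces second derivatives, hence curvature, blowing up only like $t^{-1/2}$, with constants depending on $(n,\rho)$; the deck transformations being isometries lets the estimate descend to $M$. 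Once $\left|\operatorname{Rm}(g(t))\right|\leq C t^{-1/2}$ is in hand, your remaining steps (the integrated Ricci bound, the $C^0$ estimate up to the constant issue, and Shi-type derivative bounds for the $\nabla^k\operatorname{Rm}$ estimates) are essentially the standard ones. So the fix is to replace the pseudolocality step by the harmonic-coordinate/weak-norm smoothing estimates; as written, the proposal does not establish the theorem.
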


Let $(M_i,g_i)$ and $(X,h)$ be as in \eqref{GH-convergence-to-X} and assume that $d_{GH}((M_i,g_i),(X,h))\leq \epsilon_i$ for each $i>0$, where $\epsilon_i\to 0$ as $i\to \infty$. Let $g_i(t), t\in (0,T(n,\rho)]$ be the solution of Ricci flow equation with initial metric $g_i(0)=g_i$ by Theorem \ref{thm-smoothing-ricci-flow}. By \eqref{ineq-smoothing-sec}, $g_i(t)$ is $e^{2t}$-bi-Lipschitz equivalent to $g_i$, and $|\Ric(g_i(t))|\le 2(n-1)$. Hence $(M_i,g_i(t))$ also admits $(\delta,\rho_1)$-Reifenberg local covering geometry, for some $\rho_1$ that depends on $\rho$ and $T(n,\rho)$.

By Lemma \ref{lem-smoothed-limit}, $(M_i,g_i(t))$ converges to a $(\delta,\rho_1)$-Reifenberg local covering Ricci limit space $X_t$, which is a smooth Riemaniann manifold $(X,h(t))$ and $e^{2t}$-bi-Lipschitz homeomorphic to $(X,h)$ with
\begin{equation}\label{sectional-curvature-bound-ht}
|\sec(X,h(t))|\le C(n,\rho,v)t^{-1/2}
\end{equation}
(Theorem 1.4, cf. \cite[Theorem 0.9]{Fukaya1988}). Since $h(t)$ is $e^{2t}$-bi-Lipschitz equivalence to $h$, every $1$-ball on $(X,h(t))$ has volume $\ge e^{-2nT(n,\rho)}v>0$.

By Theorem \ref{thm-fibration}, as $i$ large, there is an almost Riemannian submersion $f_{i,t}:(M_i,g_i(t))\to (X,h(t))$ for any $t\in [0,T(n,\rho)]$ satisfying \eqref{thm-fibration-1}-\eqref{thm-fibration-4} with $\epsilon=\epsilon_i\to 0$. By the stability Theorem \ref{thm-stability-nearby-metric}, all fibrations $f_{i,t}$ for $t\in [0,T(n,\rho)]$ are isomorphic to each other.

Let $\tilde g_i(t)=C(n,\rho,v)t^{-1/2}g(t)$ and $\tilde{h}(t)=C(n,\rho,v)t^{-1/2}h(t)$. Via \eqref{ineq-smoothing-sec} and \eqref{sectional-curvature-bound-ht}, one has,
\begin{equation}
\left|\sec\left(M_i,\tilde g_i(t)\right)\right|\leq 1, \quad \left|\sec\left(X,\tilde{h}(t)\right)\right|\leq 1.
\end{equation}
By the volume lower bound of $1$-balls in $(X,h(t))$, the injective radius of $(X,\tilde h(t))$  $\injrad(X,\tilde h(t))\geq r_1=r_1(n,\rho,v)>0$, for any $0< t\le T(n,\rho)$ (\cite[Theorem 4.3]{CGT1982}).
Without loss of generality, we may assume that $r_1\leq 1$.

Now let us apply the refined fibration theorem \cite[Theorem 2.6]{CFG1992} under bounded sectional curvature, i.e., there exists a small constant $\lambda(n)>0$ such that if $$d_{GH}(M_i,\tilde g_i(t)),(X,\tilde h(t))\le \epsilon \qquad\text{s.t. $\lambda^2=\epsilon\cdot r_1^{-1}\le   \lambda^2(n)$,}$$ then there is a fibration $\tilde f_{i,t}:\left(M_i,\tilde g_i(t)\right)\to \left(X,\tilde h(t)\right)$
with the following regularities:
\begin{enumerate}[(a)]
	\item\label{thm-rescaling-fibration-bounded-Ricci-conjugate-t-1}  $\tilde f_{i,t}$ is a $C(n)\lambda$-almost Riemannian submersion.
	\item\label{thm-rescaling-fibration-bounded-Ricci-conjugate-t-2}
	$\diam_{\tilde g_i(t)}(\tilde f_{i,t}^{-1}(p))\leq C(n)\epsilon, \forall p\in X.$		
	\item\label{thm-rescaling-fibration-bounded-Ricci-conjugate-t-3}
	$\left|\nabla^{2}\tilde f_{i,t}\right|_{\tilde g_i(t),\tilde h(t)}\leq C(n)r_1^{-1}$.
\end{enumerate}	
By Lemma \ref{lem-smoothed-limit},  $d_{GH}((M_i,g_i(t)),(X,h(t))\cdot r_1^{-1}\cdot C(n,\rho,v)^{1/2}t^{-1/4}\le \lambda^2(n)$ as $i$ sufficient large. Hence such $\tilde f_{i,t}:(M_i,\tilde g_i(t))\to (X,\tilde h(t))$ exists.

By Theorem \ref{thm-stability-nearby-metric}, $\tilde f_{i,t}$ is also isomorphic to $f_{i,t}$. Note that after rescaling back, $\tilde f_{i,t}:(M_i,g_i(t))\to (X,h(t))$ also satisfies \eqref{thm-stability-almost-Riem-submerison}-\eqref{thm-stability-second-fundamental-form-diameter}, particularly the rescaling invariant control on $\sndf_{\tilde f_{i,t}}$ and $A_{\tilde f_{i,t}}$.

\subsection{Smoothing via embedding method}
By embedding $(M,g)$ into the Hilbert space $L^2(M,g)$ by solutions of certain geometric PDE, Petersen-Wei-Ye in \cite{PWY1999} showed that for any given $0<\alpha<1$ and $Q>0$, if the weak harmonic $C^{0,\alpha}$-norm of a complete Riemannian $n$-manifold $(M,g)$ on scale $r>0$ is bounded above by $Q(r)$ (cf. section 2 in \cite{PWY1999}), where $Q(r)$ is non-decreasing in $r$ and $Q(r)\to 0$ as $r\to 0$, then $(M,g)$ admits a smoothed metric $g(t)$ for any $t>0$ such that
\begin{equation}\label{ineq-smoothing-sec-by-embedding}
\left\{
\begin{array}{llll}
& e^{-t}g\leq g(t)\leq e^{t}g;\\
& \left|\sec(g(t))\right|\leq K(t,n,\alpha,Q(r));\\
& \left|\nabla^{g(t)}-\nabla^{g}\right|_{g}\leq C(t,n,\alpha,Q(r));\\
&|\nabla^k \operatorname{Rm}(g(t))|_{g(t)}\le C(t,n,\alpha,Q(r),k) \; (k\ge 1)
\end{array}
\right.
\end{equation}
where $\nabla^g$ and $\nabla^{g(t)}$ are the Levi-Civita connections of $g$ and $g(t)$ respectively. The uniform control on the difference of Levi-Civita connections follows from the uniform bound on the 2nd fundamental form of the embedding (\cite[\S 5]{PWY1999}), and $K,C$ in \eqref{ineq-smoothing-sec-by-embedding} blow up as $t\to 0$.

Let $(M_i,g_i)$ and $(X,h)$ be as in \eqref{GH-convergence-to-X} again. By $(\delta,\rho)$-Reifenberg local covering geomtry, the weak harmonic $C^{1,\alpha}$-norm of $(M_i,g_i)$ on scale $r$ is bounded above by $Q(r,n,\rho,\alpha)$. Hence, by \cite{PWY1999} there is a family of smooth metrics $g_i(t),t>0$ on $M_i$ for each $i>0$ satisfying \eqref{ineq-smoothing-sec-by-embedding}.

For any fixed $t\in [0,T]$ with $T<\ln 2$, by Lemma \ref{lem-smoothed-limit}, $(M_i,g_i(t))\overset{GH}{\longrightarrow}(X,h(t))$.
Hence by the refined fibration theorem \cite[Theorem 2.6]{CFG1992} under bounded sectional curvature, there are fibrations $f_{i,t}:(M_i,g_i(t))\to (X,h(t))$ satisfying \eqref{thm-stability-almost-Riem-submerison}-\eqref{thm-stability-second-fundamental-form-diameter} for any $i\ge N(t)$. By Theorem \ref{thm-stability-lipequiv-metric}, all $f_{i,t}$ are isomorphic to the fibration $f:(M,g)\to (X,h)$ in Theorem \ref{thm-fibration}.

Note that, the existence of $f_{i,t}$ depends on the curvature bound $K(t,n,\alpha,Q)$ and the Gromov-Hausdorff distance between $(M,g_i(t))$ and $(X,h(t))$, which relies on the lower bound of $t$ and the diameter of $(X,h)$.

\subsection{Arbitrary smoothing}
Now, let us consider the general case, i.e. Theorem \ref{thm-unique-smoothed}.

Let $(M,g)$ be a closed Riemannian $n$-manifold of $|\Ric_{M}|\leq n-1$ with $(\delta,\rho)$-Reifenberg local covering geometry for some $\delta=\delta(n)$ and let $(X,h)$ be an $m$-regular $(\delta,\rho)$-Reifenberg local covering Ricci limit space where every $1$-ball's volume $\geq v>0$.

Suppose that $g(t)$ and $h(t)$, $t\in (0,1]$ are any two Riemannian metrics on $M$ and $X$ respectively (maybe got by different smoothing methods, e.g. Theorem \ref{thm-smoothing}) satisfying
\begin{equation}\label{curvature-estimates-gt}
|\sec(M,g(t))|\leq K(t),\quad |\sec(X,h(t))|\leq K(t),
\end{equation}
and
\begin{equation}\label{bi-lipschitz-equivalent-distance-g(t)}
\left\{
\begin{array}{llll}
& e^{-t}d_{g}(x,y)\leq d_{g(t)}(x,y)\leq e^{t}d_{g}(x,y),\quad  \forall x,y\in M,\\
& e^{-t}d_{h}(p,q)\leq d_{h(t)}(p,q)\leq e^{t}d_{h}(p,q),\quad  \forall p,q\in X.
\end{array}
\right.
\end{equation}

Assume that $d_{GH}((M,g), (X,h))\leq \epsilon$. We will determine the relationship between $\epsilon$, $t$ and $K(t)$ such that there exists a fibration $f_{t}:(M,g(t))\to (X,h(t))$ which is isomorphic to $f:(M,g)\to (X,h)$ provided by Theorem \ref{thm-fibration}. In particular, we will remove the dependence on the diameter of $(M,g)$ appearing in the above two special cases.

Note that, without a uniform upper bound on the diameter of $(M,g)$, after blowing up, $(M, K(t)g(t))$ cannot not be globally close to $(X,K(t)h(t))$ in the Gromov-Hausdorff distance. We will apply a crucial observation from \cite{HKRX2020} that, there exists a $(1, \varkappa(\epsilon,t\,|\,K(t)))$-GHA, i.e., a map $\alpha: (M, K(t)g(t))\to (X,K(t)h(t)),$ which is $\varkappa(\epsilon,t\,|\,K(t))$-onto and restricting to every unit ball $B_1(p,K(t)g(t))$, $\alpha$ is a $\varkappa(\epsilon,t\,|\,K(t))$-isometry, and $\diam_{K(t)g(t)}(\alpha^{-1}(x))\leq \varkappa(\epsilon,t\,|\, K(t))$ for any $x\in X$ (cf. \cite{HKRX2020}). Huang-Kong-Rong-Xu \cite{HKRX2020} observed that such local GHA is also enough for the construction in \cite{CFG1992} and \cite{Fukaya1986} of a fibration $f_t:(M,g(t))\to (X,h(t))$ as $\varkappa(\epsilon,t\,|\,K(t))$ is sufficiently small.

We first prove the first part of Theorem \ref{thm-unique-smoothed}, i.e. the existence of $f_t:(M,g(t))\to (X,h(t))$ with suitable regularities.
\begin{lemma}\label{lemma-regularity-DWY}
	Let $(M,g),(X,h)$ and $g(t),h(t)$ be as above. There exists $\epsilon_1(n,\rho,v)>0$ such that if $d_{GH}((M,g),(X,h))\le \epsilon$ and $t+K^{1/2}(t)\epsilon\le \epsilon_1(n,\rho,v)$, then there is a fibration $f_{t}:(M,g(t))\to (X,h(t))$ satisfying the following properties:
	\begin{enumerate}
		\numberwithin{enumi}{theorem}
		\item\label{lemma-fibration-smoothng-riemannian-submersion}  $f_{t}:\left(M, g(t)\right)\to \left(X, h(t)\right)$ is a $C(n)(t+K^{1/2}(t)\epsilon)^{1/2}$-almost Riemannian submersion;
		\item\label{lemma-fibration-smoothing-fiber-diameter}  for any $p\in X,$  $\diam_{g(t)}(f_{t}^{-1}(p))\leq C(n,\rho,v)\epsilon,$
		\item\label{lemma-fibration-smoothing-second-fundamental}
		$\left|\nabla^{2} f_{t}\right|_{g(t),h(t)}\leq C(n,\rho,v)K^{1/2}(t),$		
		\item\label{lemma-fibration-smoothing-infranil} every $f_{t}$-fiber is an infra-nilmanifold.
	\end{enumerate}
\end{lemma}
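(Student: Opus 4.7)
The plan is to rescale by $K(t)^{1/2}$ so that the smoothed metrics have $|\sec|\le 1$, and then invoke the local-GHA version of the Cheeger--Fukaya--Gromov fibration theorem developed in \cite{HKRX2020}. Set $\bar g(t):=K(t)g(t)$ and $\bar h(t):=K(t)h(t)$; by \eqref{curvature-estimates-gt} both $(M,\bar g(t))$ and $(X,\bar h(t))$ have $|\sec|\le 1$. A unit $\bar g(t)$-ball corresponds to a $g(t)$-ball of radius $K(t)^{-1/2}$ and, by \eqref{bi-lipschitz-equivalent-distance-g(t)}, to an essentially $g$-ball of the same small radius — precisely the scale on which the fibration $f:(M,g)\to(X,h)$ provided by Theorem \ref{thm-fibration} is very accurate.

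First I produce a $(1,\varkappa_0)$-local Gromov--Hausdorff approximation $\alpha:(M,\bar g(t))\to (X,\bar h(t))$ in the sense of \cite{HKRX2020}, namely a map whose restriction to every unit $\bar g(t)$-ball is a $\varkappa_0$-isometry and whose point-preimages have $\bar g(t)$-diameter $\le\varkappa_0$. I take $\alpha=f$. On any unit $\bar g(t)$-ball three errors accumulate: (a) the $e^{t}$-bi-Lipschitz comparisons in \eqref{bi-lipschitz-equivalent-distance-g(t)} contribute $\le C(e^t-1)\sim t$; (b) the scale-invariant $\varkappa(\epsilon|n)$-almost Riemannian submersion property \eqref{thm-fibration-1} of $f$ is controlled by $\epsilon$; (c) the $g$-fiber diameter bound \eqref{thm-fibration-2} of $C(n,\rho,v)\epsilon$ rescales to $C(n,\rho,v)K(t)^{1/2}\epsilon$ in $\bar g(t)$. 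Thus $\varkappa_0\le C(n,\rho,v)(t+K(t)^{1/2}\epsilon)$, which is forced to be arbitrarily small by the hypothesis.

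Next I apply the local-GHA fibration construction of \cite{HKRX2020,CFG1992,Fukaya1986}. The volume hypothesis and Cheeger's injectivity radius estimate \cite[Theorem 4.3]{CGT1982} give $\injrad(X,\bar h(t))\ge i_0(n,\rho,v)>0$, so there is a fixed scale on which one may smooth $\alpha$ by center-of-mass averaging in $(X,\bar h(t))$. The output is a smooth $\bar f_t:(M,\bar g(t))\to (X,\bar h(t))$ which is a $C(n)\varkappa_0^{1/2}$-almost Riemannian submersion with $|\nabla^{2}\bar f_t|_{\bar g(t),\bar h(t)}\le C(n,\rho,v)$ and infra-nilmanifold fibers; the square-root loss is the standard $C^0\Rightarrow C^1$ degradation through the smoothing kernel. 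The $\bar f_t$-fibers inherit their diameter from those of $\alpha=f$ up to the smoothing scale, hence have $\bar g(t)$-diameter $\le C(n,\rho,v)K(t)^{1/2}\epsilon$.

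Finally I set $f_t:=\bar f_t$, viewed as $(M,g(t))\to (X,h(t))$. The almost-Riemannian-submersion property and the diffeomorphism type of fibers are scale invariant, yielding \eqref{lemma-fibration-smoothng-riemannian-submersion} and \eqref{lemma-fibration-smoothing-infranil}. Fiber $g(t)$-diameters equal $K(t)^{-1/2}$ times $\bar g(t)$-diameters, giving $\diam_{g(t)}f_t^{-1}(p)\le C(n,\rho,v)\epsilon$, i.e.\ \eqref{lemma-fibration-smoothing-fiber-diameter}. The second fundamental form transforms as $|\nabla^{2}f_t|_{g(t),h(t)}=K(t)^{1/2}|\nabla^{2}\bar f_t|_{\bar g(t),\bar h(t)}\le C(n,\rho,v)K(t)^{1/2}$, giving \eqref{lemma-fibration-smoothing-second-fundamental}. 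The main obstacle is guaranteeing that the GHA error $\varkappa_0$ is genuinely $o(1)$ at unit scale of $\bar g(t)$ when the global diameter of $(M,\bar g(t))$ blows up as $K(t)\to\infty$ — this is precisely what the quantitative hypothesis $t+K(t)^{1/2}\epsilon\le\epsilon_1(n,\rho,v)$ is designed to control, and it is why the construction must be executed in the \emph{local} GHA framework of \cite{HKRX2020} rather than the global one of \cite{CFG1992}.
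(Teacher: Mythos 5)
Your overall strategy is the same as the paper's: rescale by $K(t)$ to get $|\sec|\le 1$ on both $(M,\bar g(t))$ and $(X,\bar h(t))$, produce a local $(1,\varkappa_0)$-GHA with $\varkappa_0$ controlled by $t+K^{1/2}(t)\epsilon$, feed it into the Cheeger--Fukaya--Gromov fibration theorem in the local-GHA framework of \cite{HKRX2020}, and rescale back. The genuine gap is in your treatment of the fiber-diameter bound \eqref{lemma-fibration-smoothing-fiber-diameter}. You assert that ``the $\bar f_t$-fibers inherit their diameter from those of $\alpha=f$ up to the smoothing scale, hence have $\bar g(t)$-diameter $\le C(n,\rho,v)K^{1/2}(t)\epsilon$.'' This is not what the construction gives: the smoothing in \cite{CFG1992} is performed at a definite scale (comparable to $\injrad(X,\bar h(t))\ge r_1$), and the level sets of the resulting map are controlled by the initial approximation only up to the GHA error $\varkappa_0\sim t+K^{1/2}(t)\epsilon$, not up to $K^{1/2}(t)\epsilon$. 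Thus what comes out directly is $\diam_{\bar g(t)}(f_t^{-1}(p))\le C\bigl(t+K^{1/2}(t)\epsilon\bigr)$, i.e. $\diam_{g(t)}(f_t^{-1}(p))\le C\bigl(tK^{-1/2}(t)+\epsilon\bigr)$. Since the hypothesis $t+K^{1/2}(t)\epsilon\le\epsilon_1$ allows $\epsilon$ to be arbitrarily small compared with $t$, the term $tK^{-1/2}(t)$ can dominate $\epsilon$, so the stated bound $C(n,\rho,v)\epsilon$ does not follow; and this sharp, $t$-independent bound is exactly what is needed later (e.g. in Lemma \ref{lem-small-diameter} and in applying the stability theorems).

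The paper closes precisely this gap by rerunning the argument for (2.6.1) of \cite[Theorem 2.6]{CFG1992} rather than quoting its conclusion: suppose $\diam_{\bar g(t)}(f_t^{-1}(p))=\mu\epsilon$; the bound $|\nabla^2 f_t|_{\bar g(t),\bar h(t)}\le C(n)r_1^{-1}$ propagates this to all fibers over $B_{1/2}(p,\bar h(t))$, and the almost-Riemannian-submersion property forces at least $c(n,\rho,v)\mu\,\epsilon^{-m}$ many $\epsilon$-balls (in $\bar g(t)$) to cover $f_t^{-1}(B_{1/2}(p,\bar h(t)))$; on the other hand the $\epsilon$-GHA $\alpha:(M,g)\to(X,h)$ at the \emph{original} scale, together with \eqref{bi-lipschitz-equivalent-distance-g(t)}, caps this count by $C(n)K^{1/2}(t)\epsilon^{-m}$, whence $\mu\le C(n,\rho,v)K^{1/2}(t)$ and the claimed bound follows after rescaling. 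You would need to supply this (or an equivalent) counting argument. A further, minor quantitative point: you take $\alpha=f$ from Theorem \ref{thm-fibration}, whose distortion at unit $\bar g(t)$-scale is $\varkappa(\epsilon\,|\,n)+t+K^{1/2}(t)\epsilon$, not $C(n)(t+K^{1/2}(t)\epsilon)$; to get item \eqref{lemma-fibration-smoothng-riemannian-submersion} with the stated constant one should start, as the paper does, from an $\epsilon$-GHA furnished directly by $d_{GH}((M,g),(X,h))\le\epsilon$ and regularize it by center of mass.
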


\begin{proof}[Proof of Lemma \ref{lemma-regularity-DWY}]
	Let $\tilde{g}(t)=K(t)g(t)$ and $\tilde{h}(t)=K(t)h(t)$. From \eqref{curvature-estimates-gt}, one has
	\begin{equation*}
	\left|\sec\left(M,\tilde{g}(t)\right)\right|\leq 1, \quad \left|\sec\left(X,\tilde{h}(t)\right)\right|\leq 1.
	\end{equation*}
By the same discussion as before in \S3.1, we have
$$	\injrad(X,\tilde h(t))\geq r_1=r_1(n,\rho,v)>0,\quad \text{ for any }  0< t\le 1.$$
Let $\alpha:(M,g)\to (X,h)$ be an $\epsilon$-GHA. By \eqref{bi-lipschitz-equivalent-distance-g(t)}, it is directly to check that $\alpha:(M, \tilde g(t))\to (X,\tilde h(t))$ is a $(1,8t+K(t)^{1/2}\epsilon)$-GHA. By center of mass technique, we get a continuous  $(1,C_1(n)(t+K(t)^{1/2}\epsilon))$-GHA, still denoted by  $\alpha$.
\par
Let $\lambda(n)$ be the small constant in the refined fibration theorem \cite[Theorem 2.6]{CFG1992} (see also \S3.1)
and $\epsilon_1(n,\rho,v)=\frac{\lambda^{2}(n)r_1}{C_1}\leq 1$. For $t+K^{1/2}(t)\epsilon\le \epsilon_1$, by \cite[Theorem 2.6]{CFG1992} and via regularizing $\alpha$ one obtains a $C_{2}(n)(t+K^{1/2}(t)\epsilon)^{1/2}$-almost Riemannian submersion $f_{t}:\left(M,\tilde{g}(t)\right)\to \left(X,\tilde{h}(t)\right)$ with regularities
\begin{enumerate}[(a)]
	\item\label{Lemma-rescaling-fibration-bounded-Ricci-conjugate-t-2}
	$\diam_{\tilde g(t)}(f_{t}^{-1}(p))\leq C(n,\rho,v)K(t)^{1/2}\epsilon, \forall p\in X,$		
	\item\label{Lemma-rescaling-fibration-bounded-Ricci-conjugate-t-3}
	$\left|\nabla^{2} f_{t}\right|_{\tilde g(t),\tilde h(t)}\leq C(n)r_1^{-1}$
\end{enumerate}	
and \eqref{lemma-fibration-smoothing-infranil}.
Indeed, the fact that $f_{t}:\left(M,\tilde{g}(t)\right)\to \left(X,\tilde{h}(t)\right)$ is a $C_2(n)(t+K^{1/2}(t)\epsilon)^{1/2}$-almost Riemannian submersion, the regularities \eqref{Lemma-rescaling-fibration-bounded-Ricci-conjugate-t-3} and \eqref{lemma-fibration-smoothing-infranil} are directly from \cite[Theorem 2.6]{CFG1992}.
The proof of \eqref{Lemma-rescaling-fibration-bounded-Ricci-conjugate-t-2} follows from the same argument of (2.6.1) in \cite[Theorem 2.6]{CFG1992}.

Note that the proof of (2.6.1) in \cite[Theorem 2.6]{CFG1992} depends on the underlying bundle map is an almost Riemannian submersion, regularity \eqref{Lemma-rescaling-fibration-bounded-Ricci-conjugate-t-3} and the existence of $\epsilon$-GHA $\alpha: (M,g)\to (X,h)$. Here the main difference is that $f_{t}$ is a $C_2(t+K^{1/2}(t)\epsilon)^{1/2}$-almost Riemannian submersion depending on $t$, while the fibration maps constructed in \cite[Theorem 2.6]{CFG1992} are $\varkappa(\epsilon\,|\,n)$-almost Riemannian submersions. However, such a difference will not change the order of intrinsic diameter of fibres of $f_{t}$. For the convenience of the readers, the verification of \eqref{Lemma-rescaling-fibration-bounded-Ricci-conjugate-t-2} is as follows.

Up to a rescaling, let us assume that the injectivity radius of $(X,\tilde h(t))$ equals to $1$.
Suppose for some $p\in (X,\tilde h(t))$, $\diam_{\tilde g(t)}(f_{t}^{-1}(p))=\mu \epsilon,$
where $d_{GH}((M,g),(X,h))\le \epsilon$.
By \eqref{Lemma-rescaling-fibration-bounded-Ricci-conjugate-t-3}, the extrinsic diameter of other fibers over $B_{1/2}(p,\tilde h(t))$ are more than $C_3(n,\rho,v)\mu \epsilon$.
The fact that $f_{t}$ is a $C_2(t+K^{1/2}(t)\epsilon)^{1/2}$-almost Riemannian submersion implies that at least $e^{-mC_2(t+K^{1/2}(t)\epsilon)^{1/2}}C_4(n,\rho,v)\mu\epsilon^{-m}$ many $\epsilon$-balls with respect to metric $\tilde g(t)$ are required to cover $f^{-1}(B_{1/2}(p,\tilde h(t))).$
However, by the existence of $\epsilon$-GHA $\alpha:(M,g)\to (X,h)$ and \eqref{bi-lipschitz-equivalent-distance-g(t)}, at most $C_5(n)K^{1/2}(t)\epsilon^{-m}$ such balls are required.
By the choice of $t$ and $\epsilon$,
$C_2(t+K^{1/2}(t)\epsilon)\leq \lambda^{2}(n).$
Hence $\mu\le C_6(n,\rho,v)K^{1/2}(t)$.
\par
Let $C(n,\rho,v)=\max\{C(n)r_1^{-1}, C_6\}$ and $C(n)=C_2(n)$. After rescaling back the metrics, Lemma \ref{lemma-regularity-DWY} holds.	
\end{proof}



We are ready to prove Theorem \ref{thm-unique-smoothed}.
\begin{proof}[Proof of Theorem \ref{thm-unique-smoothed}]
	~
	
Let $(M,g)$, $(X,h)$ and $g(t)$ be as in Theorem \ref{thm-unique-smoothed} and let's choose   $h(t)$  as in Theorem \ref{thm-smoothing}. Put $X_t=(X, h(t)).$ Via Lemma \ref{lemma-regularity-DWY}, there is $t_0(n,\rho,v)=\frac{1}{2}\epsilon_1(n,\rho,v)>0$ so that for each $0<t\leq t_0$, if $K(t)^{1/2}\cdot d_{GH}((M,g),(X,h))\leq t_0$, then there is a fibration $f_t:(M,g(t))\to (X,h(t))$ with regularities \eqref{lemma-fibration-smoothng-riemannian-submersion}-\eqref{lemma-fibration-smoothing-infranil}. Then by Theorem \ref{thm-stability-nearby-metric},
$f_{t}:(M,K(t)g(t))\to (X,K(t)h(t))$ and $f:(M,K(t)g)\to (X,K(t)h)$ is isomorphic, as long as both $d_{GH}((M,g),(X,h))\le \epsilon$ and $t_0$ are sufficiently small such that $C(n)(t+K^{1/2}(t)\epsilon)^{1/2}\leq \epsilon_0(n,\rho,v)$ and $t_0\le \eta_0(n,\rho,v)$ in Theorem \ref{thm-stability-nearby-metric}.
\end{proof}

\section{The existence and uniqueness of a canonical nilpotent structure}\label{unique-nilpotent-structure}
Let $(M,g)$ be a closed Riemannian $n$-manifold with $|\Ric_M|\leq n-1$ and $(\delta,\rho)$-Reifenberg local covering geometry for some $\delta=\delta(n)$. Let $(X,h)$ be a compact $m$-regular $(\delta,\rho)$-Reifenberg local covering Ricci limit space with the volume of every $1$-ball on $M$ is greater than $ v>0$. Assume that $d_{GH}((M,g),(X,h))\leq \epsilon$, sufficiently small. Via Theorem \ref{thm-fibration}, there exists a fibration $f:(M,g)\to (X,h)$ with the regularities \eqref{thm-fibration-1}-\eqref{thm-fibration-4}.
We will first construct a canonical nilpotent Killing structure on $(M,g)$, whose underlying fibration coincides with $(M,X,f)$.

\subsection{Construction of a canonical nilpotent Killing structure}\label{construction-canonical-nilpotent-structure}

 Let $g(t)$ with $0<t\le T(n,\rho)$ be the smoothed metrics on $M$ provided by Theorem \ref{thm-smoothing-ricci-flow} via Ricci flow. Note that $g(t)$ also admits $|\Ric(M,g(t))|\le 2(n-1)$ and $(2\delta,2\rho_1)$-Reifenberg local covering geometry for some constants $\delta=\delta(n)>0$ and $\rho_1=\rho_1(n,\rho)>0$, provided $0<t\leq 1/2\ln(1+\delta)$. If $\diam(M,g)\le D$, then by Lemma \ref{lem-smoothed-limit}, there is a smoothed metric $h(t)$ on $X$ such that $(X,h(t))$ is also a $(2\delta,2\rho_1)$-Reifenberg local covering Ricci limit space, and $d_{GH}((M,g(t)),(X,h(t)))\le \varkappa
(\epsilon\,|\,T,D,\rho,v,n)$. By Theorem \ref{thm-fibration}, there is a fibration $\tilde{f}_t:(M,g(t))\to (X,h(t))$ satisfying the regularities \eqref{thm-fibration-1}-\eqref{thm-fibration-4} with respect to $g(t)$ and $h(t)$.  Via Theorem \ref{thm-stability-nearby-metric}, the fibration $(M,X,\tilde{f}_t)$ is isomorphic to $(M,X,f)$.

Note that for any fixed $t$, $g(t)$ is $A(t)$-regular, i.e. $|\nabla^k\operatorname{Rm}(g(t))|_{g(t)}\le A(n,\rho,k,t)$ $(k\ge 1)$. Hence $\tilde{f}_t$ also admits a uniform control on its higher derivatives for any fixed $t$. Since up to a blowing up, $\tilde{f}_t:(M,g(t))\to (X,h(t))$ satisfies the requirement for the construction of a nilpotent Killing structure on fibrations in \cite{CFG1992}, the same procedure as \cite[\S3-\S4]{CFG1992} yields
a sheaf $\tilde{\mathfrak{n}}_t$ of nilpotent Lie algebras of
vector fields on $M$, an action of the associated sheaf $\tilde{\mathfrak{n}}_t$ of simply connected nilpotent Lie groups, and an $\tilde{\mathfrak{n}}_t$-invariant metric $\tilde g_t$ such that all elements in $\tilde{\mathfrak{n}}_t$ are Killing fields of $\tilde g_t$. We define $\tilde{\mathfrak{n}}_t$ to be a canonical nilpotent structure on $(M,g)$ whose underlying fibration coincides with $(M,X,f)$.

Furthermore, without a uniform upper bound of $(M,g)$'s diameter.
Since  the metrics $g(\text{resp. } h)$ and $g(t)(\text{resp. } h(t))$ are  $e^{2t}$-bi-lipschitz, by a triangle inequality, there is a $(1,8t+\epsilon)$-GHA from $(M,g(t))$ to $(X,h(t))$.
Hence for $8t+\epsilon\le \epsilon_0$, the constant in Theorem \ref{thm-fibration}, a fibration $f_t:(M,g(t))\to (X,h(t))$ can still be constructed by Theorem \ref{thm-fibration}, which satisfies the regularities \eqref{thm-fibration-1} for $\varkappa(\epsilon+t\,|\,n)$, \eqref{thm-fibration-3} and \eqref{thm-fibration-4}. Moreover by the proof of \eqref{lemma-fibration-smoothing-fiber-diameter} of Lemma \ref{lemma-regularity-DWY}, \eqref{thm-fibration-2} also holds for $f_t$.
By the same discussion as above, there is a nilpotent structure $\mathfrak{n}_t$ on $M$ associated to $f_t$. By the stability Theorem \ref{thm-canonical-nilstr-two-metric} in the next subsection, we will see that if $\diam(M,g)\le D$, then $\mathfrak n_t$ is isomorphic to $\tilde{\mathfrak{n}}_t$, and thus it also gives a canonical nilpotent structure on $(M,g)$.

To complete the proof of Theorem \ref{thm-existence-canonical-nilstr} and for the reader's convenience let us briefly recall how a nilpotent Killing structure $\mathfrak{n}_t$ and a nearby $\mathfrak{n}_t$-invariant metric $\bar g_t$ are constructed in \cite[\S3-\S4]{CFG1992}.

When the base space is regular, a nilpotent Killing structure $(\mathfrak n, \tilde g_t)$ is equivalent to an affine bundle $(M,X,f,\mathcal{N})$, which at the same time is a Riemannian submersion from $(M,\tilde g_t)$ to its quotient space $M/\mathfrak{n}=(X,h_t)$. Recall that a fiber bundle $(M,X,f)$ is called to be \emph{affine}, if its fiber $F_p$ is diffeomorphic to an infra-nilmanifold $\Gamma\setminus\mathcal{N}$, and its structure group is contained in the affine transformation group of $\Gamma\setminus\mathcal{N}$, where $\mathcal{N}$ is a simply connected nilpotent Lie group, $\Gamma$ is a discrete subgroup of affine transformation group of  $(\mathcal{N},\nabla^{can})$ with index $\#(\Gamma\cap\mathcal{N})\setminus\Gamma\leq k(n)$, and $\nabla^{can}$ is the canonical flat connection on the tangent bundle $T\mathcal{N}$, which is, by definition, the unique connection that makes all the left invariant vector fields parallel.

In order to see the fibration, $f:M\to X$,  arising for $g(t)$ from Theorem \ref{thm-fibration} is an affine bundle, it is equivalent to show that it is a parametrized bundle version of Gromov's almost flat manifold, such that the structure group preserves the canonical flat connection on each fiber. Recall that by \eqref{thm-fibration-4} any fiber admits a universal cover which is a nilpotent Lie group $\mathcal{N}$. For $\Lambda\subset \operatorname{Aff}(\mathcal N,\nabla^{\operatorname{can}})$ be a discrete subgroup of affine groups that preserves the $\nabla^{\operatorname{can}}$, then it induces a connection, denoted also by $\nabla^{\operatorname{can}}$, on $\Lambda\setminus \mathcal N$, which is diffeomorphic to a fiber.

By Ruh \cite{Ruh1982} for each point $z$ in a fiber, there is a flat orthogonal connection $\nabla^z$ such that it is conjugate to $\nabla^{\operatorname{can}}$ on $\Lambda\setminus \mathcal{N}$ by a gauge transformation. It was showed in \cite[\S4]{CFG1992} that by suitable averaging $\nabla^z$, there is a canonical gauge equivalent flat connection on each fiber associated to $g(t)$, which is isomorphic to $\nabla^{\operatorname{can}}$ on $\Lambda\setminus \mathcal N$. Hence $(M,X,f)$ is an affine bundle, and $\mathfrak{n}_t$ is the sheaf of nilpotent Lie algebra by vector fields descending from right invariant fields on $\Lambda\setminus \mathcal{N}$.

Let $h(t)$ be chosen as in Theorem \ref{thm-smoothing} such that the $C^{1,\alpha}$-harmonic radius of $(X,h(t))$ is greater than $r_0(n,\rho,v,\alpha,Q)>0$ for any $t\in (0,T(n,\rho)]$. For any $p\in X$, let $\phi:B_{r_0}(p,h(t))\times f_t^{-1}(p)\to f_t^{-1}(B_{r_0}(p,h(t)))$ be a local trivialization of the almost Riemannian submersion $f_t:(M,g(t))\to (X,h(t))$. Denote $U=f_t^{-1}(B_{r_0}(p,h(t)))$ and let $\pi:(\tilde{U},\tilde{g}(t),\tilde{x})\to (U,g(t),x)$ be the Riemannian universal cover with $\pi(\tilde{x})=x\in U$. By the affine structure defined as above, $\tilde{U}$ is canonically diffeomorphic to $B_{r_0}(p,h(t))\times \mathcal{N}$, where $\mathcal{N}$ is a simply connected nilpotent Lie group and the fiber $f_t^{-1}(p)$ is diffeomorphic to $\Lambda\setminus \mathcal N$, where $\Lambda$ is isomorphic to the fundamental group of $U$. Thus $\mathcal{N}$ acts on $\tilde{U}$ globally, which is the left translation on each $\tilde{f}_t=f_t\circ \pi$-fiber, and induces the infinitesimal action on $U$ by $\mathfrak{n}_t$.

Now $\bar{g}_t$ is defined as follows. By \cite{Gromov1978} and \cite{Ruh1982} $\#(\Lambda\bigcap\mathcal{N})\setminus\Lambda\leq k(n)$, up to a finite cover we assume that $\Lambda\subset\mathcal{N}$. Let $v$ be a tangent vector at $y\in \tilde{U}$. Let $h\in \mathcal{N}$ and let $hv$ denote the image of $v$ under the differential of $h$. Then the function $h\to \tilde{g}(t)(hv,hv)$ is constant on the left cosets of $\Lambda$. Since $\mathcal{N}$ is a nilpotent Lie group, it is unimodular. Therefore, the space $\Lambda\setminus \mathcal N$ inherits a canonical invariant measure $d\mu$, of total volume $1$. Define
\begin{equation}
\hat{g}_t(v,v)=\int_{\Lambda\setminus \mathcal N}\tilde{g}(t)(hv,hv)d\mu,
\end{equation}
which is invariant under $\mathcal{N}$. Rescaling the metric $g(t)$ by $K(t)=C(n,\rho,v)t^{-1/2}$ ($C(n,\rho,v)$ is the constant in Theorem \ref{thm-smoothing} satisfying $|\sec(M,K(t)g(t))|\leq 1$ and $|\sec(X,K(t)h(t))|\leq 1$) and using \cite[Proposition 4.9]{CFG1992} and the estimate of the intrinsic diameter of $\mathfrak{n}_t$'s orbits measured in $g(t)$ (see \eqref{thm-fibration-2}), one has
\begin{equation}\label{C0-closeness-of-gt-and-bar-gt}
|\hat{g}_t-\tilde{g}(t)|_{\tilde{g}(t)}\leq C(n)K(t)^{\frac{1}{2}}\epsilon l^{-1}
\end{equation}
and
\begin{equation}\label{C-infty-closeness-of-gt-and-bar-gt}
|\nabla^{i}(\hat{g}_t-\tilde{g}(t))|_{\tilde{g}(t)}\leq C(n,A(t),i)K(t)^{\frac{1+i}{2}}\epsilon l^{-(1+i)}, i=1,2,\dots,
\end{equation}
where $\nabla^{i}$ denotes the $i^{th}$-convariant derivative with respect to $\tilde{g}(t)$ and $0<l=l(n,\rho,v)\leq 1$ is a constant such that $\injrad(X,K(t)h(t))\geq l$ (see the section \ref{Ricci-flow}).
\par
Let $\bar{g}_t$ be the pushes down metric to $M$ of $\hat{g}_t$. Clearly, the construction of $\bar{g}_t$ is independent of the choice of $U$ and of the choice of base point used to define $\tilde{U}$. Hence, $\bar{g}_t$ is a well defined Riemannian metric on $M$ and $\mathfrak{n}_t$-invariant. Obviously, $\bar{g}_t$ and $g(t)$ also satisfy the estimates \eqref{C0-closeness-of-gt-and-bar-gt} and \eqref{C-infty-closeness-of-gt-and-bar-gt}.
\par
In the following, let us prove Theorem \ref{thm-existence-canonical-nilstr}.
\begin{proof}[Proof of Theorem \ref{thm-existence-canonical-nilstr}]
	~
	
Let $(M,g)$ and $(X,h)$ be as in Theorem \ref{thm-fibration} and let $g(t),t\in (0,T(n,\rho)]$ be the solution of Ricci flow equation with initial value $g$. By the discussion above, there are $0<t_0(n,\rho,v),\epsilon_0(n,\rho,v)\leq 1$ such that if $d_{GH}((M,g),(X,h))\leq\epsilon\leq \epsilon_0$, then there is a nilpotent Killing structure $(\mathfrak{n}_{t_0},\bar{g}_{t_0})$ on $(M,g)$ from $g(t_0)$, whose underlying fibration coincides with $(M,X,f)$ provided by Theorem \ref{thm-fibration}.
\par
 Moreover, if $\epsilon_0$ is taken sufficiently small relative to $t_0$ and $l^{-1}$ such that the right-hand side of \eqref{C0-closeness-of-gt-and-bar-gt} and \eqref{C-infty-closeness-of-gt-and-bar-gt} is small, then $\bar{g}_{t_0}$ and $g(t_0)$ are $C^{2}$-close. Note that $|\Ric(M,g(t_0))|\leq 2(n-1)$, we have $|\Ric(M,\bar{g}_{t_0})|\leq 3(n-1)$. Furthermore, $(M,\bar{g}_{t_0})$ has $(3\delta,\rho_1)$-Reifenberg local covering geometry because $(M,g(t_0))$ has $(2\delta,2\rho_1)$-Reifenberg local covering geometry.
\end{proof}



\subsection{Stability of canonical nilpotent Killing structure}
In this subsection, we will prove Theorem \ref{thm-uniqueness-canonical-nilstr}. Let $(M,g)$ and $(X,h)$ be as in Theorem \ref{thm-fibration} and assume that $d_{GH}((M,g),(X,h))\leq \epsilon$.
Let $g_{\eta}$ be any nearby metric on $M$ such that $e^{-\eta}g\le g_{\eta}\le e^{\eta}g$ and $\max\{\max|\sec(M,g_{\eta})|,1\}=K_\eta\ge 1$. Assume $g_{\eta}$ admits a nilpotent Killing structure $\mathfrak{n}_{\eta}$, such that it's quotient space is a smooth Riemannian manifold $(X,h_{\eta})$ satisfying $e^{-\eta}h\le h_{\eta}\le e^{\eta}h$, and the diameter of $n_{\eta}$'s orbits measured in $g_{\eta}$ are no more than  $K_\eta^{-1/2}\epsilon_1$(sufficiently small).
Then $(M,K_\eta g_{\eta})$ has sectional curvature absolutely bounded exactly by $1$ and is still collapsed.
\par
By \eqref{bi-lip-equiv}, the quotient space $(X,h_{\eta})$ of $(M,g_{\eta})$ by $\mathfrak{n}_{\eta}$-action is a smooth Riemannian manifold. Hence the projection map
$f_{\eta}:(M, K_\eta g_{\eta})\to (X, K_\eta h_{\eta})$ is a smooth Riemannian submersion and satisfies $\diam_{K_\eta g_{\eta}} (f_{\eta}^{-1}(p))\le \epsilon_1$ for any $p\in X$. Via the construction of a nilpotent Killing structure under bounded sectional curvature in \cite{CFG1992}, the second fundamental form of $f_{\eta}$ above a regular point of the quotient space $X_\eta=(X,h_\eta)$ satisfies $|\nabla^{2}f_{\eta}|_{K_\eta g_{\eta},K_\eta h_{\eta}}\leq C_1(n,v)$ automatically, where generally the constant $C_1(n,v)$ depends on the distance to the singular set of $X_\eta$ (is empty in our case) and the volume of $X_\eta$. Note that $\vol(B_1(p,h))\geq v>0$ for any $p\in X$ and $h_{\eta}$ is $e^\eta$-bi-Lipschitz equivalent to $h$ by the assumption, one has $\vol(B_1(p,h_{\eta}))\geq e^{-n\eta}v>\frac{1}{2}v$ if $0\leq \eta\leq 1/n$.
Hence, by using O'Neill formula \cite{O'Neill1966} there is $C_2(n,v)>0$ such that $|\sec(X,K_\eta h_{\eta})|\leq C_2(n,v)$. By the estimate on local injectivity radius in \cite[Theorem 4.3]{CGT1982}, there is $i_0(n,v)>0$ such that $\injrad(X,K_\eta h_{\eta})\geq i_0$.
\par

Let $(\mathfrak{n}_{t_0},\bar{g}_{t_0})$ be the canonical nilpotent Killing structure defined in \S\ref{construction-canonical-nilpotent-structure} above for the smoothed metric $g(t_0)$ provided by Ricci flow, whose underlying fibration is $f_{t_0}:(M,\bar g(t_0))\to (X,\bar h(t_0))$.
In order to show the underlying fibrations $f_{\eta}$ and $f_{t_0}$ of the two nilpotent Killing structures $\mathfrak{n}_\eta$ and $\mathfrak{n}_{t_0}$ are isomorphic, let us first observe that the diameter of $f_{t_0}$-fibers is at the same scale of $f_\eta$.

\begin{lemma}\label{lem-small-diameter}
	After the same rescaling on the metric $\bar g_{t_0}$, $f_{t_0}$'s every fiber satisfies $\diam_{K_\eta \bar g_{t_0}}(f_{t_0}^{-1}(p))\le C(n,\rho,v)(\epsilon_1+\eta)$.
\end{lemma}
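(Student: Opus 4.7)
The strategy is to work at the rescaled scale $K_\eta \bar g_{t_0}$ and construct a separate fibration there using the smallness of the $\mathfrak{n}_\eta$-orbits as a local Gromov--Hausdorff approximation, then identify it with $f_{t_0}$ via the uniqueness theory developed in Sections~\ref{proof-canonical-fibration}--\ref{Closeness-of-vertical-distributions}.

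First I would compare the four relevant metrics. Chaining the Lipschitz equivalence $e^{-\eta}g \le g_\eta \le e^\eta g$ from \eqref{bi-lip-equiv}, the estimate $|g(t_0)-g|_g \le 4t_0$ of Theorem~\ref{thm-smoothing-ricci-flow} (recall $t_0=t_0(n,\rho,v)$ is fixed), and the $C^0$-closeness $|\bar g_{t_0}-g(t_0)|_{g(t_0)} \le C(n,\rho,v)\,\epsilon$ from \eqref{C0-closeness-of-gt-and-bar-gt}, one obtains that $\bar g_{t_0}$ and $g_\eta$ are $e^{\eta+\varkappa(\epsilon\,|\,n,\rho,v)}$-Lipschitz equivalent. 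Consequently, every $\mathfrak{n}_\eta$-orbit, which has diameter at most $\epsilon_1$ in $K_\eta g_\eta$, has diameter at most $e^{\eta+\varkappa(\epsilon\,|\,n,\rho,v)}\epsilon_1 \le C(n,\rho,v)(\epsilon_1+\eta)$ in $K_\eta \bar g_{t_0}$ once $\epsilon_0$ is chosen small enough in terms of $t_0$.

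Next I would build a fibration at the $K_\eta$-scale. Under rescaling, the bounds $|\Ric(M,\bar g_{t_0})|\le 3(n-1)$ and the $(3\delta,\rho_1)$-Reifenberg local covering geometry from Theorem~\ref{thm-existence-canonical-nilstr} become $|\Ric(M,K_\eta\bar g_{t_0})|\le 3(n-1)K_\eta^{-1}\le n-1$ and $(3\delta,K_\eta^{1/2}\rho_1)$-Reifenberg, which is at least as strong as $(3\delta,\rho_1)$ since $K_\eta\ge 1$, and the quotient space $(X, K_\eta\bar h_{t_0})$ is still regular. Passing the small $\mathfrak{n}_\eta$-orbits through the bi-Lipschitz identifications produces on every unit ball of $(M,K_\eta\bar g_{t_0})$ a $(1,C(n,\rho,v)(\epsilon_1+\eta))$-Gromov--Hausdorff approximation to the corresponding ball of $(X,K_\eta\bar h_{t_0})$. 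Invoking Theorem~\ref{thm-fibration} in its local-GHA formulation introduced in \cite{HKRX2020} (precisely the form used in Lemma~\ref{lemma-regularity-DWY}) then produces a fibration $\tilde f_\eta\colon (M,K_\eta\bar g_{t_0})\to (X,K_\eta\bar h_{t_0})$ satisfying the regularities \eqref{thm-fibration-1}--\eqref{thm-fibration-4}, with fiber diameter at most $C(n,\rho,v)(\epsilon_1+\eta)$ in $K_\eta\bar g_{t_0}$.

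To finish, I would identify $\tilde f_\eta$ with $f_{t_0}$. Both fibrations satisfy the scale-invariant regularities \eqref{thm-stability-almost-Riem-submerison} and \eqref{thm-stability-second-fundamental-form-diameter} in $K_\eta\bar g_{t_0}$: the almost Riemannian submersion property is scale-free, and the bound $|\sndf|\cdot\diam\le \epsilon$ is as well, being invariant under homothety. By Proposition~\ref{key-proposition}\eqref{Closeness-of-vertical-distributions-2} applied in the rescaled setting, the vertical distributions of $\tilde f_\eta$ and $f_{t_0}$ coincide up to arbitrarily small angle. Since each fiber is a connected infra-nilmanifold integral to its vertical distribution, the fibers $\tilde f_\eta^{-1}(p)$ and $f_{t_0}^{-1}(p)$ agree as submanifolds of $M$ (up to a bi-Lipschitz diffeomorphism with constant close to $1$), and the diameter bound transfers to $f_{t_0}$, proving the lemma.

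The main obstacle is the last step: a priori the diameter of $f_{t_0}^{-1}(p)$ in $K_\eta\bar g_{t_0}$ is only bounded by $C(n,\rho,v)K_\eta^{1/2}\epsilon$, which may be very large compared with $\epsilon_1+\eta$ when $K_\eta$ is large, so the hypothesis \eqref{thm-stability-diameter-of-fibres} of Theorem~\ref{thm-stability-nearby-metric} fails globally and one must work purely through the local GHA perspective of \cite{HKRX2020} and the purely differential closeness of vertical distributions from Proposition~\ref{key-proposition}, rather than through the global stability theorems of Section~\ref{proof-canonical-fibration}.
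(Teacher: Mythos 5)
Your identification step is circular, and that is where the proposal breaks. Proposition \ref{key-proposition} (through the choice of the blow-up factor \eqref{blowup-number-def}, the bound \eqref{estimate-fiber-diameter-blowup}, and Lemma \ref{lem-psi-blowup}) assumes the fiber-diameter regularity \eqref{thm-stability-diameter-of-fibres} for \emph{both} fibrations; for $f_{t_0}$ measured in $K_\eta\bar g_{t_0}$ the only a priori bound is $C(n,\rho,v)K_\eta^{1/2}\epsilon$, and since Theorem \ref{thm-uniqueness-canonical-nilstr} imposes no relation between $K_\eta$ and $\epsilon$, this is exactly the quantity the lemma is meant to control. Your last paragraph names this obstacle, but the proposed remedy --- ``work purely through \dots the closeness of vertical distributions from Proposition \ref{key-proposition}'' --- invokes the very statement whose hypotheses fail, so no actual argument is supplied; and even granting closeness of the vertical distributions, the jump from pointwise closeness to ``the fibers agree as submanifolds and the diameter bound transfers'' needs a propagation/bootstrap argument along the (a priori long) $f_{t_0}$-fiber that you do not give. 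Two further inaccuracies: there is no ``local-GHA formulation of Theorem \ref{thm-fibration}'' in the paper --- the local-GHA fibration construction used here (Lemma \ref{lemma-regularity-DWY}, after \cite{HKRX2020}) goes through \cite[Theorem 2.6]{CFG1992} and needs $|\sec|\le 1$ at the working scale, i.e.\ $K_\eta\gtrsim t_0^{-1/2}$, a case you do not address; and chaining the metric comparisons gives an $e^{\eta+Ct_0+\varkappa(\epsilon\,|\,n,\rho,v)}$-equivalence between $g_\eta$ and $\bar g_{t_0}$, not $e^{\eta+\varkappa(\epsilon\,|\,n,\rho,v)}$, since $t_0$ is fixed rather than small with $\epsilon$ (harmless here, but worth noting).

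The paper's proof is much more direct and constructs no second fibration. The Riemannian submersion $f_\eta\colon(M,K_\eta g_\eta)\to(X,K_\eta h_\eta)$, whose fibers have diameter $\le\epsilon_1$, combined with \eqref{bi-lip-equiv}, already provides a local $(1,2\epsilon_1+4\eta)$-GHA from $(M,K_\eta g)$ to $(X,K_\eta h)$; one then simply repeats the counting argument proving \eqref{lemma-fibration-smoothing-fiber-diameter} in Lemma \ref{lemma-regularity-DWY} for $f_{t_0}$ at this scale: if some $f_{t_0}$-fiber had diameter $\mu$, the almost-Riemannian-submersion property and the second-fundamental-form bound of $f_{t_0}$ (both available after rescaling) force a lower bound on the number of small balls needed to cover the preimage of a half-ball in the base, while the local GHA furnished by $f_\eta$ gives an upper bound of order $\epsilon_1+\eta$; comparing the two yields $\diam_{K_\eta g_\eta}(f_{t_0}^{-1}(p))\le C(n,\rho,v)(\epsilon_1+\eta)$, which transfers to $K_\eta\bar g_{t_0}$ by the Lipschitz equivalence of the metrics. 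If you wish to salvage your route, you would have to either run this counting argument anyway (making the auxiliary fibration superfluous), or apply Proposition \ref{key-proposition} at the unrescaled scale, where both fibrations do satisfy \eqref{thm-stability-diameter-of-fibres}, and then supply the missing bootstrap showing that near-verticality with respect to the small-fiber fibration propagates along an $f_{t_0}$-fiber to bound its diameter at the $K_\eta$-scale.
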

\begin{proof}
	Since there is a Riemannian submersion $f_{\eta}:(M,K_\eta g_\eta)\to (X, K_\eta h_\eta)$, whose fiber has diameter $\le \epsilon_1$, by the bi-Lipschitz equivalence \eqref{bi-lip-equiv} it gives rise to a local $(1,2\epsilon_1+4\eta)$-GHA $ f_{\eta}:(M, K_\eta g)\to (X, K_\eta h)$. By repeating the proof of \eqref{lemma-fibration-smoothing-fiber-diameter} of Lemma \ref{lemma-regularity-DWY} for $f_{t_0}:(M, K_\eta g)\to (X, K_\eta h)$, we see that $\diam_{K_\eta g_{\eta}}(f_{t_0}^{-1}(p))\le C(n,\rho,v)(\epsilon_1+\eta)$ for any $p\in X$.
\end{proof}

By Lemma \ref{lem-small-diameter} and Theorem \ref{thm-stability-nearby-metric}, for  $0<C(n,\rho,v)\max\{\epsilon_1+\eta,\epsilon\} \leq \epsilon_0$, there is an isomorphism $(\Phi_1,\Psi)$ between the two fibrations such that $f_{t_0}\circ \Phi_1=\Psi\circ \tilde{f}_{\eta}$, where $\Psi:(X,h)\to (X,h_{\eta})$ is $4e^{\eta+t_0}$-bi-Lipschitz and $\Phi_1$ is $e^{\varkappa(\epsilon+\epsilon_1+t_0+\eta\,|\,n,\rho,v)}$-bi-Lipschitz.


Let $K=\max\{K_\eta, \max\{|\sec(M, g(t_0))|\}\}$. Then both $K g_\eta$ and $K g(t_0)$ admit $|\sec|\le 1$. Note that at least one of the followings holds:

(1) $\diam_{K g_{\eta}} f_\eta^{-1}(p)\le \epsilon_1$, for any $p\in X$;

(2) $\diam_{K g(t_0)} f_{t_0}^{-1}(p)\le \epsilon_2$, where $\epsilon_2$ is the smallest diameter of $f_{t_0}$ measured in $\max\{|\sec(M, g(t_0))|\}g(t_0)$.

Thus, by Lemma \ref{lem-small-diameter} again,
$$\max_{p\in X}\{\diam_{K g_{\eta}} f_\eta^{-1}(p), \diam_{K g(t_0)} f_{t_0}^{-1}(p)\}\le C(n,\rho,v)\max\{\epsilon_1,\epsilon_2\}.$$

Now the equivalence between $\mathfrak{n}_{t_0}$ and $\mathfrak{n}_{\eta}$ is reduced to the following stability theorem.

\begin{theorem}[cf. \cite{CFG1992}, \cite{JiangXu2019}]\label{thm-canonical-nilstr-two-metric}	
	Given $v>0$ and positive integers $m\leq n$, there are $\eta_0=\eta_0(n,v)>0$ and $\epsilon_0=\epsilon_0(n,v)>0$ such that for any $0\leq \eta\leq \eta_0$ and $0<\epsilon\leq \epsilon_0$, the following holds.
	\par
	Assume $(M,X_i,f_i,\mathcal{N}_i)$ $(i=1,2)$ are two affine bundle associated to nilpotent Killing structures $(\mathfrak{n}_i,g_i)$ such that $|\sec(M,g_i)|\leq 1$,
	$g_i$ are $e^{\eta}$-bi-Lipschitz equivalent to each other, and every $1$-ball on the quotient spaces $(X_i,h_i)$ by $\mathfrak{n}_i$ has volume no less than $v>0$. If $f_i:(M_i,g_i)\to (X_i,h_i)$ satisfies the following regularities:
\begin{enumerate}
		\numberwithin{enumi}{theorem}
\item \label{diameter-of-affine-fiber} for any $p_i\in X_i$, $\diam_{g_i}(f_i^{-1}(p_i))\leq \epsilon;$
\item \label{bound-second-fundamental-form-affine-fiber} the second fundamental form of $f_i$ satisfies $|\nabla^{2}f_i|_{g_i,h_i}\leq 1.$
\end{enumerate}
	Then there is an affine bundle isomorphism $(\Phi,\Psi)$ such that $\Psi\circ f_1=f_2\circ \Phi$ and $\Phi$ preserve the affine structure.
\end{theorem}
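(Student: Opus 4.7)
The plan is to reduce the statement to the stability theorem for almost Riemannian submersions (Theorem \ref{thm-stability-nearby-metric}) and then promote the resulting bi-Lipschitz bundle isomorphism to an affine one by averaging on the nilpotent Lie groups, following the pattern of \cite[\S4 and Appendix 2]{CFG1992} and \cite[\S5--6]{JiangXu2019}. First, I would check that the data $(M,X_i,f_i)$ $(i=1,2)$ verify the hypotheses \eqref{thm-stability-almost-Riem-submerison}--\eqref{thm-stability-second-fundamental-form-diameter}: since each $f_i$ is the Riemannian projection induced by the Killing action $\mathfrak{n}_i$ on $(M,g_i)$, it is automatically a Riemannian submersion, and the assumptions \eqref{diameter-of-affine-fiber}--\eqref{bound-second-fundamental-form-affine-fiber} give $\diam_{g_i}(F_{i,p_i})\le\epsilon$ and $|\sndf|_{F_{i,p_i},g_i}\cdot\diam_{g_i}(F_{i,p_i})\le\epsilon$, while $|A|\cdot\diam\le\epsilon$ follows from $|\nabla^2 f_i|\le 1$ via O'Neill's identities; the volume lower bound together with $|\sec|\le 1$ also yields an injectivity radius bound $\injrad(X_i,h_i)\ge i_0(n,v)>0$ (\cite[Theorem 4.3]{CGT1982}). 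Applying Theorem \ref{thm-stability-nearby-metric} then produces diffeomorphisms $(\Phi_0,\Psi)$ with $\Psi\circ f_1=f_2\circ\Phi_0$, where $\Psi$ is $4e^\eta$-bi-Lipschitz and $\Phi_0$ is $e^{\varkappa(\epsilon\,|\,n,v)+\eta}$-bi-Lipschitz.

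Second, I would upgrade $\Phi_0$ to an affine map on each fiber. Fix $p\in X_1$ and work on the universal covers $\pi_i:\widetilde U_i\to U_i=f_i^{-1}(B_{i_0/2}(p_i,h_i))$, with the affine trivializations $\widetilde U_i\cong B_{i_0/2}\times\mathcal N_i$ given by the affine-bundle structure. A lift $\tilde\Phi_0:\widetilde U_1\to\widetilde U_2$ is fiber-preserving and bi-Lipschitz, and its restriction to each fiber $\{x\}\times\mathcal N_1$ is a bi-Lipschitz diffeomorphism onto $\{\Psi(x)\}\times\mathcal N_2$ with image of diameter $\lesssim\epsilon$. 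Since $\mathcal N_i$ is a simply connected nilpotent Lie group, the exponential map is a diffeomorphism; using it together with $\tilde\Phi_0$ at a chosen base point $(x,e_1)$, one defines the candidate affine map $\Phi_{\mathrm{aff}}:\mathcal N_1\to\mathcal N_2$ so that $\Phi_{\mathrm{aff}}=\exp_2\circ\,L\circ\exp_1^{-1}$, where $L=d\tilde\Phi_0|_{(x,e_1)}$ is a linear map of the Lie algebras. A center-of-mass argument on $\mathcal N_2$, averaged over the compact set $\Lambda_1\setminus\mathcal N_1$ with respect to the canonical invariant measure, then adjusts $\tilde\Phi_0$ to agree $C^0$-close to $\Phi_{\mathrm{aff}}$ and to commute with the $\Lambda_i$-actions (this is where Ruh's rigidity \cite{Ruh1982} and \cite[Corollary 3.4, Proposition 4.9]{CFG1992} enter). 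The smallness estimate $\diam\le\epsilon$ guarantees that the averaged map is still a bi-Lipschitz diffeomorphism. After passing to the quotient, we obtain a fiber-preserving map $\Phi$ whose restriction to each fiber is affine with respect to the canonical flat connections $\nabla^{\mathrm{can}}_i$.

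Third, the construction has to be performed coherently across fibers to yield a global smooth bundle map. Over the trivializing neighborhood $B_{i_0/2}(p_1,h_1)$ this amounts to choosing the base point and the linear datum $L$ to depend smoothly on $x\in X_1$; the bound $|\nabla^2 f_i|\le 1$ ensures that the horizontal parallel transport of $\nabla^{\mathrm{can}}_i$ along $X_i$ is uniformly controlled, so the linear data vary smoothly with $x$. Consistency on overlaps of two trivializing neighborhoods is handled as in \cite[Proposition 4.10]{CFG1992} using the canonical nature of the flat connection (independent of the trivialization up to affine automorphisms in the structure group), together with a partition-of-unity argument applied on the base. This produces a globally defined affine bundle isomorphism $(\Phi,\Psi)$ with $\Psi\circ f_1=f_2\circ\Phi$.

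The main obstacle is precisely the gluing in the third step: on the local level both $\mathfrak n_i$ determine canonical affine structures, but matching them \emph{globally and smoothly} requires that the two nilpotent group actions be simultaneously put in standard form. The key input for this is the uniform second-fundamental-form bound \eqref{bound-second-fundamental-form-affine-fiber} together with the uniqueness up to conjugation in $\mathrm{Aff}(\mathcal N)$ of Ruh's flat connection, which guarantees that the fiberwise averaging produces maps that are compatible across trivializations once the base-point choice is made smoothly. Modulo this coherence argument, the iterative improvement of $\Phi_0$ converges because each step contracts the deviation from affine by a factor controlled by $\varkappa(\epsilon+\eta\,|\,n,v)$, and the smallness of $\epsilon,\eta$ keeps the process well inside the bi-Lipschitz regime provided by Theorem \ref{thm-stability-nearby-metric}.
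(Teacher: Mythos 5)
Your first step matches the paper: the submersion property, the O'Neill estimate giving $|\sec(X_i,h_i)|\le 4$, the injectivity radius bound from \cite[Theorem 4.3]{CGT1982}, and the application of Theorem \ref{thm-stability-nearby-metric} to obtain a (non-affine) bundle isomorphism $(\Phi_1,\Psi)$ are exactly how the proof begins. The gap is in how you promote this to an affine isomorphism. The paper first pulls back the second affine bundle by $\Phi_1$, so that both nilpotent structures act on the \emph{same} local universal cover $\tilde U$ with the same deck group $\Lambda$; it then identifies $\mathcal N_1$ and $\mathcal N_2$ by Malcev rigidity through the lattice $\Lambda\cap\mathcal N_i$, on which the two actions literally coincide; it proves the key Lemma \ref{two-actions-C1-close} (by a contradiction/convergence argument using the $C^{1,\alpha}$-compactness of the isometric actions and the density of the $\Lambda_j$-orbits) that the two actions $\rho_1(\mathcal N)$ and $\rho_2(\mathcal N)$ are $\varkappa(\epsilon,\eta\,|\,n,\rho,v)$-$C^1$-close; and only then defines $\tilde\Phi_2(\tilde y)$ as the Grove--Karcher center of mass of $[h]\mapsto\rho_1([h]^{-1})\circ\rho_2([h])(\tilde y)$ over $\Lambda\setminus\mathcal N$, using the injectivity radius bound \eqref{injrad-estimate-of-tilde-g1}. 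This averaging gives \emph{exact} equivariance $\tilde\Phi_2\circ\rho_2(h)=\rho_1(h)\circ\tilde\Phi_2$ for all $h\in\mathcal N$, and since the construction is canonical it does not depend on the choice of $p$, so it globalizes with no gluing step at all; the final map is $\Phi=\Phi_1\circ\Phi_2$.

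Your proposal is missing precisely these ingredients, and the substitutes you offer do not close the argument. The candidate $\Phi_{\mathrm{aff}}=\exp_2\circ L\circ\exp_1^{-1}$ with $L=d\tilde\Phi_0$ at a single point is not an affine map of the nilpotent groups unless $L$ is a Lie algebra isomorphism compatible with the two structures, and nothing provided by Theorem \ref{thm-stability-nearby-metric} (which is only a bi-Lipschitz statement) says that $\tilde\Phi_0$ is fiberwise $C^1$-close to such a map; without first knowing that the two group actions are $C^1$-close after the Malcev identification along the common lattice, the averaging you invoke is not even known to be non-degenerate, and making the result commute with the $\Lambda_i$-actions alone is strictly weaker than intertwining the full $\mathcal N$-actions (equivalently, preserving $\nabla^{\mathrm{can}}$), which is what ``preserving the affine structure'' means here. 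Finally, the coherence-across-trivializations problem that you yourself flag as the main obstacle, and propose to handle by a partition of unity plus an unproven contraction/iteration, is exactly what the paper's canonical center-of-mass conjugation eliminates: because $\tilde\Phi_2$ is defined intrinsically from the two actions, base-point independence is automatic and no gluing or iteration is needed. So the missing key idea is the $C^1$-closeness of the two nilpotent actions (Lemma \ref{two-actions-C1-close}) together with the conjugating center-of-mass map built from $\rho_1([h]^{-1})\rho_2([h])$, rather than any attempt to approximate $\tilde\Phi_0$ by an exponential-linear model.
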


\par
The argument of Theorem \ref{thm-canonical-nilstr-two-metric} is similar as that of \cite[Theorem 3.1]{JiangXu2019} (cf. also \cite[Section 7]{CFG1992}). For completeness, we give a proof below. Before giving the proof, let us make some preparations.
\par
Since $\mathfrak{n}_i$ is a nilpotent Killing structure for $g_i$ and $h_i$ is the quotient metric of $g_i$ on $X_i$ by $\mathfrak{n}_i$-action, $f_i:(M,g_i)\to (X_i,h_i)$ is a Riemannian submersion.
By \eqref{bound-second-fundamental-form-affine-fiber} and O'Neill formula \cite{O'Neill1966}, one has $|\sec(X_i,h_i)|\leq 4$. Combine this with the assumption $\vol(B_1(p_i,h_i))\geq v>0$ for any $p_i\in X_i$ and the estimate on local injectivity radius in \cite[Theorem 4.3]{CGT1982}, we have $\injrad(X_i,h_i)\geq i_1(n,v)$ for some constant $i_1(n,v)>0$.
\par
Via the assumption in Theorem \ref{thm-canonical-nilstr-two-metric}, the discussion above and Theorem \ref{thm-stability-nearby-metric}, there are $\eta_0(n,v)>0$ and $\epsilon_0(n,v)>0$ such that if $0\leq \eta\leq \eta_0$ and $0<\epsilon\leq \epsilon_0$, there is a bundle isomorphism $(\Phi_1,\Psi)$ between $(M,X_i,f_i)$ such that $\Psi\circ f_1=f_2\circ \Phi_1$. However, $\Phi_1$ don't preserve the affine structure in general. Hence, we need to improve $\Phi_1$ to a diffeomorphism $\Phi:M\to M$ such that $\Psi\circ f_1=f_2\circ \Phi$ and $\Phi$ preserve the affine structure, which is equivalent to that for any $x\in M$ and the neighborhood $U=f_1^{-1}(B_{i_1}(f_1(x),h_1))$ of $x$,  the two actions induced by $\mathfrak{n}_i$ are conjugate by the lifting of $\Phi$ on the universal cover $\pi:(\tilde{U},\tilde x)\to (U,x)$ of $U$.
\par
Put $\tilde{f}_1=f_1\circ \pi$ and $\tilde{f}_2=f_2\circ\Phi_1\circ\pi$.
Let $(M,X_2,f_{2}^{*},\mathcal{N}_{2})$ denote the pullback affine bundle of $(M,X_2,f_2,\mathcal{N}_2)$ by diffeomorphism $\Phi_1$, where $f_2^*=f_2\circ\Phi_1$.
Let $\rho_1(\mathcal{N}_1)$(resp. $\rho_2(\mathcal{N}_2)$) be the free action of the simply connected nilpotent Lie group $\mathcal{N}_{1}$(resp. $\mathcal{N}_2$) on $\tilde{U}$ induced by $\mathfrak{n}_1$ (resp. $\mathfrak{n}_2$),
which are left translation on $\tilde{f}_{1}$-fibres (resp. $\tilde{f}_{2}$-fibres). And let $\Lambda$ denote the fundamental group of $f_{1}^{-1}(p)=(\Psi^{-1}\circ f_2\circ\Phi)^{-1}(p)$. By Malcev's rigidity theorem (see \cite{Rag79},\cite{BK1981} or \cite[Theorem 3.7]{CFG1992}), $\mathcal{N}_1$ and $\mathcal{N}_{2}$ can be identified to be the  same group $\mathcal{N}$ by the natural isomorphism between their lattice $\Lambda\cap \mathcal{N}_1$ and $\Lambda\cap \mathcal{N}_{2}$. Moreover, the two actions $\rho_1(\mathcal{N})$ and $\rho_2(\mathcal{N})$ of $\mathcal{N}$ coincide on $\Lambda$.
\par
Let $\tilde{g}_1$ (resp. $\tilde{g}_2$) be the pullback metric of $g_1$ (resp. $\Phi_1^{*}g_2$) to $\tilde{U}$ by $\pi$. Since $g_i,i=1,2$ are affine invariant with respect to $f_i$, the action $\rho_i(\mathcal{N})$ is isometric with respect to $\tilde{g}_i$. Moreover, by \cite[Proposition 4.6.3]{BK1981} (or \cite[Lemma 7.13]{CFG1992}),
\begin{equation}\label{injrad-estimate-of-tilde-g1}
\injrad_{\tilde{g}_i}(\tilde{y})\geq \min\{i_1/2, i_2(n)\}>0
\end{equation}
for any $\tilde{y}\in \tilde{U}$ with $d_{\tilde{g}_i}(\tilde{y}, \partial\tilde{U})>i_1/2,$ where $i_2(n)>0$ is a constant.
\par
\begin{lemma}\label{two-actions-C1-close}
The two actions $\rho_i(\mathcal{N}), i=1,2$ on $\tilde{U}$ are $\varkappa(\epsilon,\eta\,|\,n,\rho,v)$-$C^1$-close.
\end{lemma}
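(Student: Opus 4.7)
The plan is to argue by contradiction and compactness, following the strategy of \cite[\S 7]{CFG1992}. Assume the conclusion fails: then there is a sequence of data $(M_k, g_{1,k}, g_{2,k}, \mathfrak n_{1,k}, \mathfrak n_{2,k}, \tilde y_k)$ satisfying the hypotheses of Theorem \ref{thm-canonical-nilstr-two-metric} with $\epsilon_k,\eta_k \to 0$, such that for a fixed basis of $\mathfrak n_k$ the generating Killing fields of $\rho_{1,k}(\mathcal N_k)$ and $\rho_{2,k}(\mathcal N_k)$ are $C^1$-separated by a definite amount $\theta_0>0$ at $\tilde y_k$.

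First I would extract limits of the ambient geometry. The curvature bound $|\sec(\tilde g_{i,k})|\le 1$ and the injectivity radius estimate \eqref{injrad-estimate-of-tilde-g1} give uniform $C^{1,\alpha}$-harmonic charts around $\tilde y_k$; after passing to a subsequence, $(\tilde U_k,\tilde g_{1,k},\tilde y_k)\to (\tilde U_\infty,\tilde g_\infty,\tilde y_\infty)$ in the $C^{1,\alpha}$-topology, and since $\eta_k\to 0$ the metrics $\tilde g_{2,k}$ converge to the \emph{same} limit $\tilde g_\infty$. By Malcev rigidity and $\dim \mathcal N_k \le n$, after another subsequence the Lie groups $\mathcal N_k$ stabilize to a single simply connected nilpotent Lie group $\mathcal N$ with a fixed cocompact lattice $\Lambda$. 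Each generating Killing field $X_{i,k}^*$ has uniformly controlled $1$-jet at $\tilde y_k$ (its value is tangent to a fiber of diameter $\le \epsilon_k$, and its first derivative is controlled via the Bochner-type identity $\nabla^2 X = -R(\cdot,X)\cdot$ under $|\sec|\le 1$); standard compactness of Killing fields then gives $C^{1,\alpha}$-subconvergence $X_{i,k}^*\to X_{i,\infty}^*$ on $(\tilde U_\infty,\tilde g_\infty)$, and exponentiating produces isometric actions $\rho_{i,\infty}(\mathcal N)$ on $\tilde U_\infty$.

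The heart of the argument is to identify $\rho_{1,\infty}\equiv \rho_{2,\infty}$, contradicting the $C^1$-gap. The limits enjoy three key properties: (a) they share their orbits, because $\tilde f_{2,k}=\Psi_k\circ \tilde f_{1,k}$ and each $\tilde f_{i,k}$-fiber is an $\rho_{i,k}(\mathcal N)$-orbit; (b) both are isometric with respect to the common metric $\tilde g_\infty$; (c) $\rho_{1,\infty}|_\Lambda=\rho_{2,\infty}|_\Lambda$, by the compatibility recorded before the lemma. Fix the orbit $F_\infty$ through $\tilde y_\infty$ and define the equivariant parametrizations $\phi_i:\mathcal N\to F_\infty$, $\phi_i(n):=\rho_{i,\infty}(n)\cdot \tilde y_\infty$; the composite $\psi:=\phi_1^{-1}\circ \phi_2$ satisfies $\psi(\lambda n)=\lambda\psi(n)$ for all $\lambda\in\Lambda$ by (c), hence descends to a diffeomorphism of the compact infra-nilmanifold $\Lambda\backslash \mathcal N$ fixing the base coset while transporting one left-invariant metric on $\mathcal N$ to another by (b).

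The main obstacle is the rigidity step: showing that a self-diffeomorphism of $\Lambda\backslash \mathcal N$ which is homotopic to the identity, fixes a point, and intertwines left-invariant metrics must itself be the identity. I would deduce this by combining the rigidity of almost flat manifolds of Gromov \cite{Gromov1978} and Ruh \cite{Ruh1982} with the observation that the descended $\psi$ is a priori $C^1$-close to the identity on compact sets — a consequence of the fact that at the finite stage the two affine structures differ by gauge transformations of canonical flat connections that are themselves close by the construction in \cite[\S 4]{CFG1992}. Under such a priori closeness, an isometry of a nilmanifold fixing a point must be the identity once the injectivity radius is large relative to the deviation. With $\psi=\mathrm{id}$, one has $\rho_{1,\infty}=\rho_{2,\infty}$ on $F_\infty$; propagating this along neighbouring orbits by continuous dependence of horizontal transport yields $\rho_{1,\infty}\equiv\rho_{2,\infty}$ on $\tilde U_\infty$, the desired contradiction.
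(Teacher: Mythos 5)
Your overall frame (argue by contradiction, pass to $C^{1,\alpha}$ limits of the universal covers, extract limit actions of the nilpotent groups, and derive a contradiction with the assumed $C^1$-gap) is the same as the paper's, and your compactness step for the Killing fields is fine. The gap is in the identification $\rho_{1,\infty}\equiv\rho_{2,\infty}$. First, the claim that after a subsequence ``the Lie groups $\mathcal N_k$ stabilize to a single $\mathcal N$ with a fixed cocompact lattice $\Lambda$'' is unjustified and in fact incompatible with the collapsing regime: since $\diam_{g_{i,k}}(f_{i,k}^{-1}(p))\le\epsilon_k\to 0$ while the universal covers $\tilde U_k$ are non-collapsed by \eqref{injrad-estimate-of-tilde-g1}, the lattices $\Lambda_k$ necessarily degenerate --- their orbits become $\epsilon_k$-dense in the $\mathcal N_k$-orbits --- so in the limit there is no cocompact lattice and no fixed compact infra-nilmanifold $\Lambda\backslash\mathcal N$ on which to run a rigidity argument. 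Second, your rigidity step is circular: the asserted a priori $C^1$-closeness of $\psi$ to the identity, ``because the two affine structures differ by gauge transformations that are close by the construction in \cite[\S 4]{CFG1992}'', is not available here. In Theorem \ref{thm-canonical-nilstr-two-metric} the two nilpotent Killing structures are arbitrary (in the application, one comes from the Ricci-flow smoothing and the other from an arbitrary metric $g_\eta$ with its own curvature bound); they are not both produced by the Cheeger--Fukaya--Gromov construction from a common metric, and the closeness of the two structures is precisely what the lemma is supposed to prove. Third, the fallback statement that an isometry of a nilmanifold fixing a point and $C^1$-close to the identity must be the identity is false in general: the isotropy group can be positive-dimensional (e.g.\ Heisenberg nilmanifolds with rotationally symmetric left-invariant metrics admit arbitrarily small nontrivial isometries fixing a point).

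What rescues the argument --- and what the paper actually does --- is to exploit the degeneration of the lattices rather than fight it. Since $\rho_{1,j}$ and $\rho_{2,j}$ coincide on $\Lambda_j$ and the fiber diameters go to $0$, the common $\Lambda_j$-orbits become dense in the $\mathcal N_j$-orbits; hence, after identifying the two limit spaces by the limit isometry $I_\infty$ of the identity maps $(\tilde U_j,\tilde g_{1,j})\to(\tilde U_j,\tilde g_{2,j})$ (here $\eta_j\to 0$ is used), both limit actions $\rho_{i,\infty}$ are determined by the limit of the single $\Lambda_j$-action and therefore coincide. This yields $\varkappa$-$C^0$-closeness of $\rho_{1,j}(\mathcal N_j)$ and $\rho_{2,j}(\mathcal N_j)$, which upgrades to $C^1$-closeness because both actions are isometric for $\tilde g_{i,j}$ and the convergence is equivariant $C^{1,\alpha}$ --- contradicting the assumed definite $C^1$-gap. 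If you replace your nilmanifold-rigidity step by this density argument, the rest of your outline goes through.
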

\begin{proof}
We argue it by contradiction. Assume that there exist $(M_j,g_{i,j})$ and $(X_{i,j},h_{i,j})$ be as in Theorem \ref{thm-canonical-nilstr-two-metric}, where $i=1,2$, $j=1,2,\dots$ and a sequence of affine bundles $(M_j,X_{i,j},f_{i,j},\mathcal{N}_{j})$
satisfying the regularities \eqref{thm-fibration-1}-\eqref{thm-fibration-4} (replacing $\epsilon$ by $\epsilon_j$, where $\epsilon_j\to 0$ as $j\to \infty$) such that $g_{i,j},i=1,2$ are $e^{\eta_j}$-bi-Lipschitz equivalent for each $j>0$, where $\eta_j\to 0$ as $j\to \infty$ and affine invariant with respect to $f_{i,j}$. However, there are $p_{1,j}\in X_{1,j}$, $U_j=f_{1,j}^{-1}(B_{i_1}(p_{1,j}))$ and $x_j\in U_j$ such that the two actions $\rho_{i,j}(\mathcal{N}_j)$ on the universal cover $\tilde{U}_j$ of $U_j$ are $\theta_0$ away from each other in the sense of $C^{1}$ at some point $\tilde{x}_j\in \pi_j^{-1}(x_j)$ for all $j>0$, where $\pi_j:\tilde{U}_j\to U_j$ is the universal cover and $\theta_0$ is a positive constant.
 Moreover, we can assume that the actions of $\rho_{i,j}(\mathcal{N}_j)$ coincide with on $\Lambda_j$, where $\Lambda_j$ is the fundamental group of $U_j$.
\par
By the discussion above and by passing to a subsequence we have the following equivalent convergence
  \begin{equation}\label{closeness-of-two-actions}
   (\tilde{U}_j, \tilde g_{i,j},\tilde x_j, \rho_{j}(\mathcal{N}_j))\stackrel{C^{1,\alpha}}{\longrightarrow} (\tilde U_{i,\infty}, \tilde g_{i,\infty},\tilde x_{i,\infty}, \rho_{i,\infty}(\mathcal{N}_{i,\infty})), \quad \text{as}\ j\to \infty,
   \end{equation}
  where $\tilde{g}_{i,j}$ be the pull-back metrics defined as above.
Since the diameter of both $f_{1,j}$-fibers and $f_{2,j}$-fibers go to $0$ as $j\to \infty$, the action of $\Lambda_j$ becomes
more and more dense.  Thus, $\rho_{i,\infty}(\mathcal{N}_{i,\infty})$ is also the limit action of $\Lambda_j$.
\par
Let $(\Phi_{1,j},\Psi_j)$ be the bundle isomorphisms between $(M_{j},X_{i,j},f_{i,j})$ as above.  By Theorem \ref{thm-stability-nearby-metric}, 
we have
\begin{equation*}
e^{-\varkappa(\epsilon_j,\eta_j\,|\,n,\rho,v)}\leq|d\Phi_{1,j}(w)|_{g_{2,j}}\leq e^{\varkappa(\epsilon_j,\eta_j\,|\,n,\rho,v)},
\end{equation*}
where $d\Phi_{1,j}$ denotes the tangent map of $\Phi_{1,j}$.
\par
 By the definition of $\tilde{g}_{i,j}$ the identity maps $I_{j}:(\tilde{U}_j,\tilde{g}_{1,j},\tilde{x}_j)\to (\tilde{U}_j,\tilde{g}_{2,j},\tilde{x}_j)$ are $e^{\varkappa(\epsilon_j,\eta_j\,|\,n,\rho,v)}$-bi-Lipschitz diffeomorphisms, and so converge to an isometry $I_{\infty}:(\tilde{U}_{1,\infty},\tilde{g}_{1,\infty},\tilde x_{1,\infty})\to (\tilde{U}_{2,\infty},\tilde{g}_{2,\infty},\tilde x_{2,\infty})$ as $j\to \infty$. Thus, the two limit actions $\rho_{i,\infty}(\mathcal{N}_{i,\infty}),i=1,2$ can be identified by the isometry $I_{\infty}$. Note that $\rho_{i,\infty}(\mathcal{N}_{i,\infty})$ are all the limit action of $\Lambda_j$ as $j\to \infty$, $\rho_{i,j}(\mathcal{N}_j)$ are $\varkappa(\epsilon_j,\eta_j\,|\,n,\rho,v)$-$C^{0}$-close. Since $g_{i,j}$ are affine invariant with respect to $f_{i,j}$, $\rho_{i,j}(\mathcal{N}_j)$ acts on $(\tilde{U},\tilde{g}_{i,j})$ isometrically. Hence, it follows from \eqref{closeness-of-two-actions} that $\rho_{i,j}(\mathcal{N}_j)$ are $\varkappa(\epsilon_j,\eta_j\,|\,n,\rho,v)$-$C^{1}$-close, a contradiction with the assumption.
\end{proof}


In the following, similar as in \cite[Section 7]{CFG1992} (cf. also \cite[Section 7]{JiangXu2019}), we will construct a diffeomorphism $\Phi_2:M\to M$ by using Lemma \ref{two-actions-C1-close} such that the lifting of $\Phi_2$ to the universal covering space $\tilde{U}$ of $U=f_1^{-1}(B_{i_1}(p,h_1))$ communicate with the two actions $\rho_i(\mathcal{N}),i=1,2$ of $\mathcal{N}$ for any $p\in X_1$. It shows that $\Phi_2$ is an affine bundle isomorphism between $(M,X_1,f_1,\mathcal{N}_1)$ and $(M,X_2,f_2^*=f_2\circ \Phi_1,\mathcal{N}_2)$.
Hence, $(\Phi=\Phi_1\circ\Phi_2,\Psi)$ is the desired affine bundle isomorphism between $(M,X_i,f_i,\mathcal{N}_i),i=1,2$.
\begin{proof}[Proof of Theorem \ref{thm-canonical-nilstr-two-metric}]
	~
	
 Let $\tilde{U}_1=\tilde{f}_1^{-1}(B_{i_1/2}(p,h_1))$ and let $\rho_i(\mathcal{N}),i=1,2$ are the isometric actions of simply connect nilpotent Lie group $\mathcal{N}$ on $\tilde{U}$ induced by $\tilde{f}_i$. Let $\Lambda$ be the fundamental group of $U=f_1^{-1}(B_{i_1}(p,h_1))$ and let $\rho_i(\Lambda)$ denote the actions of $\Lambda$ on $\tilde{U}$. Then $\rho_1(\lambda)=\rho_2(\lambda)$ for any $\lambda\in \Lambda$.  Hence, for any $[h]\in \Lambda\setminus \mathcal N$, $\rho_1([h]^{-1})\rho_2([h])$ is well defined and $\varkappa(\epsilon,\eta\,|\,n,\rho,v)$-$C^1$-close to identity by Lemma \ref{two-actions-C1-close}.  By the assumption of curvature condition of $(M,g_1)$ and \eqref{injrad-estimate-of-tilde-g1}, for any
   $\tilde y\in \tilde f_1^{-1}(B_{r_0/2}(p,h_1))$ we can define $\tilde\Phi_1(\tilde y)$ to be the center of mass of $[h]\to \rho_1([h]^{-1})\circ \rho_2([h]),  [h]\in \Lambda\setminus \mathcal N$, i.e., the critical value of
$$\tilde{z}\to \frac{1}{2}\int_{\Lambda\setminus \mathcal N}d_{\tilde{g}_1}^2(\tilde{z}, \rho_1([h]^{-1})\circ\rho_2([h])(\tilde y))d\mu,$$
where $d\mu$ is a measure on $\Lambda\setminus \mathcal N$ of total volume $1$.
 By \cite{GroveKarcher1973}, $\tilde \Phi_2:\tilde{U}_1\to \tilde{V}_1=\tilde{\Phi}_2(\tilde{U}_1)$ is a diffeomorphism such that
 \begin{equation*}
 \tilde{\Phi}_2\circ \rho_2(h)=\rho_1(h)\circ\tilde{\Phi}_2, \ \text{for any}\ h\in \mathcal{N}.
 \end{equation*}
 By construction, the quotient $\Phi_2:f_1^{-1}(B_{i_1/2}(p,h_1))\to f_1^{-1}(\tilde{f}_1(\tilde{V}_1))$ is affine equivalent. Moreover, for any $x\in B_{i_1/2}(p,h_1)$, the definition of $\Phi_2(x)$ does not depend on the choice of $p$. Hence, it can be extended to a globally defined diffeomorphism $\Phi_2:M\to M$, which is an affine bundle isomorphism between $(M,X_1,f_1,\mathcal{N}_1)$ and $(M,X_2,f_2^*,\mathcal{N}_2)$. Therefore, $(\Phi=\Phi_1\circ \Phi_2,\Psi)$ is an affine bundle isomorphism between $(M,X_1,f_1,\mathcal{N}_1)$ and $(M,X_2,f_2,\mathcal{N}_2)$ and $\Phi$ preserve the affine structure.
 \end{proof}
\begin{proof}[Proof of Theorem \ref{thm-uniqueness-canonical-nilstr}]
	~
	
It follows from Theorem \ref{thm-canonical-nilstr-two-metric} and the discussion above Theorem \ref{thm-canonical-nilstr-two-metric} immediately.
\end{proof}

\bibliographystyle{plain}
\bibliography{document}
\end{document}